\newtheorem{thm}{Theorem}[section]
\newtheorem{prop}[thm]{Proposition}
\newtheorem{lem}[thm]{Lemma}
\theoremstyle{definition}
\newtheorem{ex}{Example}  
\newtheorem{defn}[thm]{Definition}
\newtheorem{remark}{Remark}
\title{Asymptotic face distributions in random reduced $\mathfrak s\mathfrak l_3$ webs}
\author{David Kogan}
\address{Department of Mathematics, Yale University, New Haven, CT 06511, USA}
\email{d.kogan@yale.edu}
\date{October 1, 2025}
\begin{document}

\begin{abstract}
We study the distribution of interior faces in uniformly random reduced $ \mathfrak{sl}_3 $ webs.  
Using Tymoczko’s bijection between $ 3 \times n $ standard Young tableaux and reduced webs, this problem can be reformulated in terms of constrained lattice paths and associated $m$-diagrams.  
We develop a framework that expresses crossing probabilities in the $m$-diagram as solutions to discrete Dirichlet problems on the triangular lattice, which are evaluated through solutions to lattice Green’s functions.  
From this we obtain explicit limiting formulas for the frequencies of interior faces of each type.  

As an application, we analyze faces at a distance at least $d$ from the boundary.  We prove that almost all interior faces far from the boundary are hexagons, while faces of size $6+2k$ occur with probability $O(d^{-2k})$. 
\end{abstract}

\maketitle

\section{Introduction}

Reduced $ \mathfrak{sl}_3 $ webs are planar trivalent graphs embedded in a disk whose interior faces have size at least six. They were introduced by Kuperberg \cite{kuperberg1996spiders} as combinatorial bases for invariant functions of tensor products of $ \mathfrak{sl}_3 $ representations. Khovanov and Kuperberg \cite{khovanov1999web} developed a 
recursive algorithm that gives a bijection between reduced webs and $3\times n$ tableaux.

Extending this result, Tymoczko~\cite{tymoczko2007webs} found an explicit map from standard Young tableaux of shape $3 \times n$ to reduced webs with $3n$ boundary vertices. This correspondence allows us to study uniformly random webs via uniformly random tableaux, or equivalently via constrained lattice paths in the nonnegative quadrant. The scaling limit of these paths to a Brownian excursion in two dimensions  provides a tool to study the local statistics of faces and crossings in random webs.

Our approach is to track how crossings of arcs in Tymoczko’s $m$--diagram representation give rise to faces. 
We develop a framework that encodes the probabilities of crossing configurations (patterns of arc intersections) in terms of discrete harmonic functions on the triangular lattice and associated Dirichlet problems in wedge domains. 
In this correspondence, subpaths of the constrained lattice walk represent local portions of the $m$--diagram and determine the structure of individual faces. 
This yields explicit limiting probabilities for faces of a given type, determined by their sequence of crossings. For instance, we compute that the probability a newly opened first arc of an $m$ closes without undergoing any further crossings is 
$\frac{243\sqrt 3}{40\pi}-3$. More generally, such crossing probabilities are in principle computable, with exact evaluations available in some cases and contour integral formulas in others.

The proofs proceed in several stages. 
In Section~\ref{sec:interior_faces}, we recall Tymoczko’s bijection and formalize how interior faces correspond to alternating red/blue arc sequences in an $m$--diagram. 
In Section~\ref{sec:crossing_probabilities}, we analyze crossing events by encoding subpaths of the constrained lattice walk as axis–hitting problems. 
This reduces the probability of any prescribed crossing configuration to the evaluation of certain discrete harmonic functions $h_a$ and $g$, which solve Dirichlet boundary problems in wedge domains. 
In Section~\ref{sec:green_function}, we evaluate these functions via lattice Green’s functions, and obtain integral formulas. 
This yields the general limiting distribution of faces of each type (Theorem~\ref{thm:face_probabilities}). 
Finally, in Section~\ref{sec:faces_m}, we extend the analysis to faces lying at depth $d$ from the boundary. 
Here, we introduce the notion of face diagram extension and prove that each extension lowers the probability of a larger face by a factor of order $d^{-2}$, implying that almost all bulk faces are hexagons. 

Our main theorem states that the bulk of a large random reduced $\mathfrak{sl}_3$ web is asymptotically hexagonal:

\begin{thm}\label{thm:face_asymptotics}
Let $d \to \infty$ with $n \to \infty$ and $d = o(n)$.  
For faces at distance at least $d$ from the boundary in a uniformly random reduced $\mathfrak s\mathfrak l_3$ web, we have:
\begin{enumerate}
    \item The probability that a uniformly chosen such face has size $6$ converges to $1$ as $d \to \infty$.
    \item More generally, for finite $d$, the probability that such a face has size $6 + 2k$ is $O(d^{-2k})$ for $k \ge 1$, where the constant depends only on $k$.
\end{enumerate}
\end{thm}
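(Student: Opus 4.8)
The plan is to prove part (2) by induction on $k$, using the \emph{face diagram extension} operation of Section~\ref{sec:faces_m} to relate faces of size $6+2k$ to those of size $6+2(k-1)$ and bounding the probabilistic cost of a single extension by $O(d^{-2})$; part (1) will then follow formally, since the face-size distribution at depth $\ge d$ is a probability distribution on $\{6,8,10,\dots\}$ whose mass on sizes $\ge 8$ is $\sum_{k\ge 1}O(d^{-2k}) = O(d^{-2}) \to 0$, forcing the mass at size $6$ to tend to $1$.

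The first step is combinatorial. Building on Section~\ref{sec:interior_faces}, I would classify a face at depth $\ge d$ by the cyclic sequence of red/blue arc segments on its boundary together with the crossing pattern of the finitely many arcs of the $m$--diagram incident to it; call this local picture the \emph{face diagram}. The hexagon is the unique minimal face diagram, and I would show that the boundary of a face of size $6+2k$ carries exactly $k$ more crossing pairs than a hexagon. The key lemma of this step is that every face diagram of a size-$(6+2k)$ face arises from some face diagram of a size-$(6+2(k-1))$ face by a single extension (insertion of one extra crossing along a boundary arc), and that for each fixed $k$ the set of admissible extension types is finite, with cardinality depending only on $k$. This turns the problem into: bound, uniformly over extension types, the factor by which a given extension decreases the realization probability of the face diagram.

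The second, analytic step is to show that this factor is $O(d^{-2})$. By the reduction of Section~\ref{sec:crossing_probabilities}, the probability of realizing a prescribed face diagram is a Green's-function expression in the discrete harmonic functions $h_a$ and $g$, evaluated at lattice points whose distance to the relevant wedge apex is comparable to the depth $d$ of the face. An extension forces the constrained lattice walk to perform one additional excursion spanning a scale comparable to $d$ while remaining in the wedge; by the diffusive scaling of the walk together with the lattice Green's-function asymptotics and wedge harmonic-decay estimates of Section~\ref{sec:green_function}, the corresponding ratio of $h_a$/$g$ values is $O(d^{-2})$. I would record this as the recursive inequality
\[
\mathbb{P}\big(\mathrm{size}=6+2k \,\big|\, \mathrm{depth}\ge d\big) \;\le\; C_k\, d^{-2}\,\mathbb{P}\big(\mathrm{size}=6+2(k-1)\,\big|\,\mathrm{depth}\ge d\big),
\]
valid uniformly in $n$ in the regime $d=o(n)$ (so that, at depth $d\ll n$, the walk's scaling limit locally resembles a Brownian motion in the wedge and does not feel the endpoint constraint), with $C_k$ depending only on $k$. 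Iterating from $k=0$, where $\mathbb{P}(\mathrm{depth}\ge d)$ is bounded below on the relevant range so the conditioning is harmless, yields part (2), and part (1) follows as above.

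I expect the main obstacle to be the per-extension estimate: showing the Green's-function ratio is genuinely $O(d^{-2})$, and not merely $o(1)$, uniformly over all extension types and over the location of the face in the $m$--diagram. This requires upgrading the scaling-limit convergence of Section~\ref{sec:green_function} to explicit polynomial decay rates for $h_a$ and $g$ in the wedge, and ruling out that a degenerate or boundary-adjacent configuration realizes the extra crossing more cheaply; controlling these degenerate cases, and checking that the extension steps contribute multiplicatively rather than with an uncontrolled correlation, is where the bookkeeping will be heaviest.
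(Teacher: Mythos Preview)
Your proposal is correct and follows essentially the same route as the paper: reduce to the face diagram extension operation, show via the discrete harmonic/Poisson-kernel analysis that each extension contributes a multiplicative factor $O(d^{-2})$ (the paper's Proposition~\ref{prop:ratio_decay}), and iterate $k$ times starting from the base hexagon diagrams. One small correction: the hexagon is not a \emph{unique} minimal face diagram---there are four base diagrams of size $4$ in the $m$-diagram (Lemma~\ref{lem:face_four_diagram} and Proposition~\ref{prop:face_diagrams})---but this only affects constants, not the argument.
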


\begin{figure}[htbp]
    \centering
    \includegraphics[width=0.6\textwidth]{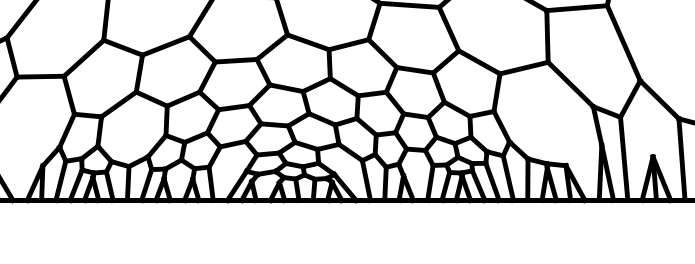}
    \caption{Zoomed $O(1)$ window of a reduced $\mathfrak{sl}_3$ web in the upper half–plane. 
The boundary lies on the real line. Moving rightward, i.i.d. boundary moves open and close strands, 
producing the local web configuration.}
\label{fig:partial_web}
\end{figure}

Figure \ref{fig:random_web} shows a simulation of random reduced $\mathfrak s\mathfrak l_3$ web.

\begin{figure}[htbp]
    \centering
\includegraphics[width=0.7\textwidth]{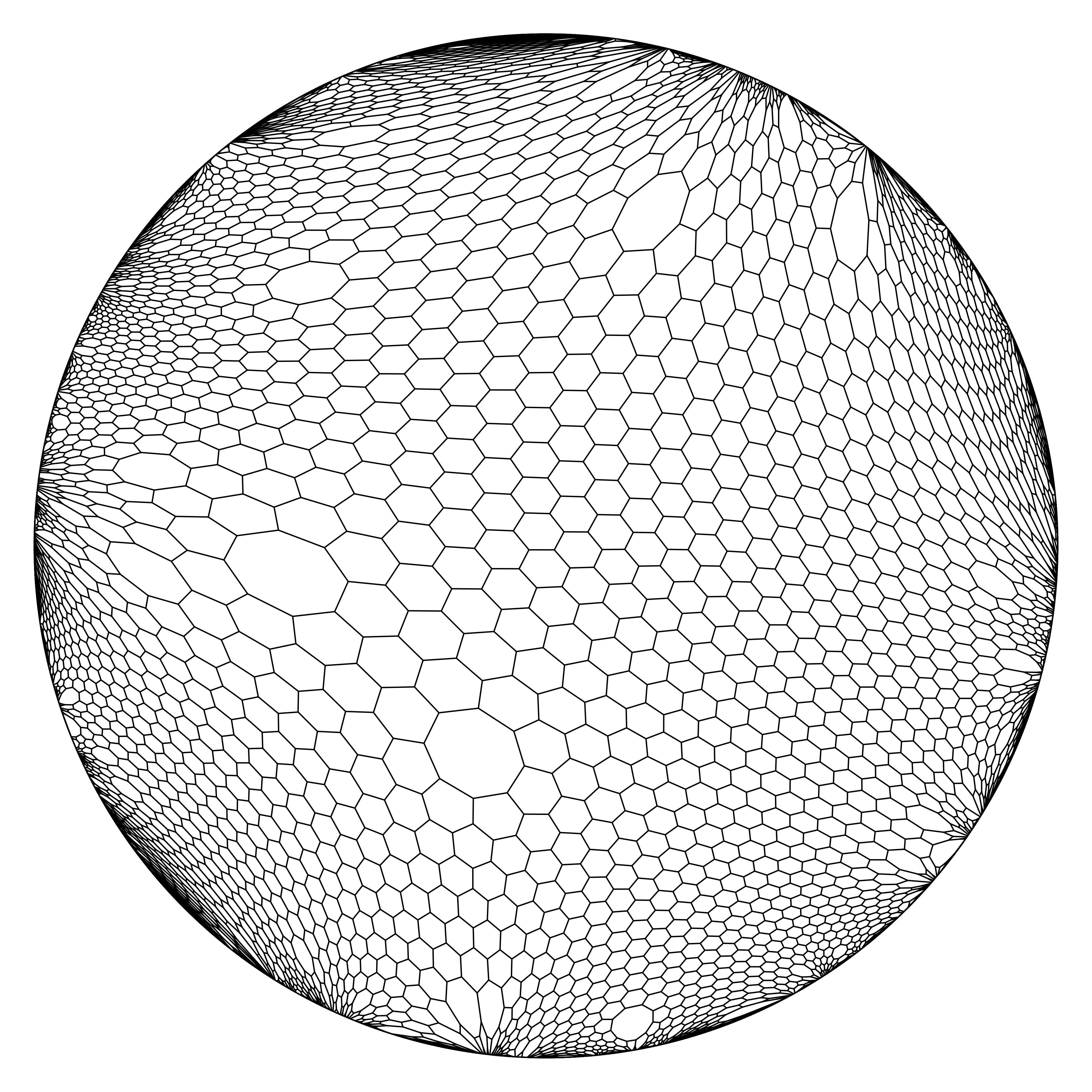}
    \caption{Uniform random reduced web with $n=800$ ($2400$ boundary vertices)}
    \label{fig:random_web}
\end{figure}

\section*{Acknowledgements}
The author is grateful to Richard Kenyon for suggesting the problem and for helpful discussions and insights.

\section{Interior Faces of \texorpdfstring{$\mathfrak s \mathfrak l_3$}{sl3} webs and \texorpdfstring{$m$}{m}-diagrams}
\label{sec:interior_faces}

To analyze random reduced webs we first recall Tymoczko’s bijection between 
$3 \times n$ tableaux and $m$--diagrams. This representation encodes each web 
as a collection of arcs above a line, so that every interior face of the web 
corresponds to a region bounded by arcs in the $m$--diagram. In this section we 
review the bijection and describe how interior faces arise in this picture.

\FloatBarrier

\subsection{Tymoczko's Bijection} 

\label{sec:Tymoczko's Bijection}

\begin{defn}
A \emph{web} for $\mathfrak{sl}_3$ is a planar bipartite directed graph embedded in a disk with the following properties:
\begin{enumerate}
    \item Each internal vertex is trivalent and has all incident edges either directed inwards (a sink) or outwards (a source).
    \item Each boundary vertex has degree one and all boundary edges are oriented outwards.
\end{enumerate}
A web is \emph{reduced} (or \emph{non–elliptic}) if every interior face is bounded by at least six edges.  
Let $W_n$ denote the set of reduced $\mathfrak{sl}_3$ webs with $3n$ boundary vertices.
\end{defn}

 In this section we review Tymoczko's bijection between $3\times n$ standard Young tableaux and reduced $\mathfrak s\mathfrak l_3$ reduced webs of size $n$ from \cite{tymoczko2007webs}. Let $T$ be a standard Young tableaux of shape $3\times n$ with entries $\{1,2,\dots,3n\}$. The corresponding reduced $\mathfrak{sl}_3$ web $W=\Phi(T)$ is obtained in two steps. First, construct the $m$-diagram of $T$, and then produce a reduced web from the $m$-diagram.

\begin{defn}[SYT to $m$-diagrams]
We start from a $3\times n$ standard Young tableaux $T$.
\begin{enumerate}
    \item Draw a horizontal line with $3n$ marked points labeled $1,2,\dots,3n$ from left to right. This line is the boundary of the $m$-diagram, and all arcs lie above it.
    \item For each $i=1, \dots, 3n$ not on the bottom row, find $j<i$ 
    \begin{itemize}
        \item such that $j$ lies in the row immediately below the row $i$.
        \item $j$ is the largest number that is not already on an arc with another number from the same row as $i$. 
        \item Join $i$ to $j$ with a semicircular arc.
    \end{itemize}
\end{enumerate}
An \emph{$m$} in the diagram consists of two arcs $(i,j)$ and $(j,k)$ with $i<j<k$, where $(i,j)$ is a first arc and $(j,k)$ is a second arc. The triple $i, j, k$ are in the first, second, and third row of the tableaux $T$, respectively. We will use the color red to refer to first arcs and blue to refer to second arcs. See figure \ref{fig:3x3-tableau-mdiagram}.
\end{defn}

\begin{figure}[ht]
\centering
\begin{tikzpicture}[scale=0.95, every node/.style={font=\small}]

  \begin{scope}[shift={(0,0)}]
    \node[anchor=west,font=\bfseries] at (-0.2,3.7) {(a) Tableaux};

    \foreach \x in {0,1,2} {
      \foreach \y in {0,1,2} {
        \draw (\x,\y) rectangle ++(1,1);
      }
    }

    \node at (0.5,0.5) {1};
    \node at (1.5,0.5) {2};
    \node at (2.5,0.5) {3};
    \node at (0.5,1.5) {4};
    \node at (1.5,1.5) {5};
    \node at (2.5,1.5) {7};
    \node at (0.5,2.5) {6};
    \node at (1.5,2.5) {8};
    \node at (2.5,2.5) {9};
  \end{scope}

  \begin{scope}[shift={(6.2,0)}]
    \node[anchor=west,font=\bfseries] at (-0.2,3.7) {(b) $m$-diagram};

    \draw (0.8,0) -- (9.2,0);
    \foreach \i in {1,...,9} {
      \draw (\i,0) -- (\i,0.08);
      \node[below] at (\i,0) {\i};
    }

    \begin{scope}[very thick,red]
      \draw (3,0) to[out=90,in=90] (4,0);
      \draw (2,0) to[out=90,in=90] (5,0);
      \draw (1,0) to[out=90,in=90] (7,0);
    \end{scope}

    \begin{scope}[very thick,blue]
      \draw (7,0) to[out=90,in=90] (8,0);
      \draw (5,0) to[out=90,in=90] (6,0);
      \draw (4,0) to[out=90,in=90] (9,0);
    \end{scope}

    \draw[red,very thick] (0.4,3.2)--(1.2,3.2) node[right,black] {first arcs (row 2 $\to$ row 1)};
    \draw[blue,very thick]  (0.4,2.8)--(1.2,2.8) node[right,black] {second arcs (row 3 $\to$ row 2)};
  \end{scope}
\end{tikzpicture}
\caption{A $3\times3$ standard Young tableaux (a) and its $m$-diagram (b). Red arcs join second-row entries to first-row entries; blue arcs join third-row entries to second-row entries.}
\label{fig:3x3-tableau-mdiagram}
\end{figure}
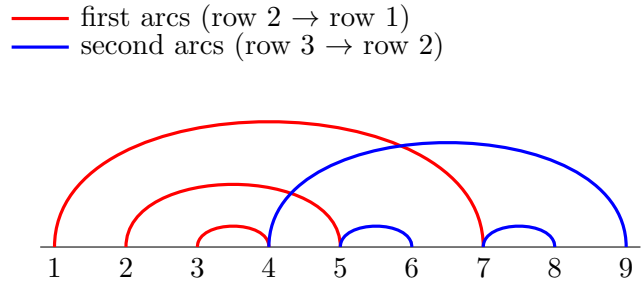

\FloatBarrier

The following lemma was proven in \cite{tymoczko2007webs}.
\begin{lem} \label{lem:ms_conditions}
    In any $m$-diagram the following hold:
    \begin{itemize}
        \item at most two arcs cross at any point
        \item if two arcs cross they must be of different color
        \item Any two $m$s cross at most once
    \end{itemize}
\end{lem}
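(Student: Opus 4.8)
The plan is to prove the middle bullet first --- that two arcs of the same colour never cross --- and then to derive the other two from it together with elementary properties of semicircular arcs. I would begin by noting that the red arcs form a perfect matching between the row-$1$ labels and the row-$2$ labels, and the blue arcs a perfect matching between the row-$2$ labels and the row-$3$ labels; in each case the construction scans $1,2,\dots,3n$ in increasing order and joins every upper endpoint (a row-$2$ label for a red arc, a row-$3$ label for a blue arc) to the largest lower label still available, where ``available'' means ``not yet used as a lower endpoint within the same matching.'' Since labels increase from left to right, ``largest available'' is ``most recently encountered and not yet matched,'' so each matching is a last-in-first-out bracket matching and is therefore non-crossing; I would spell this out directly. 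Suppose two red arcs $(p_1,q_1)$ and $(p_2,q_2)$ cross, with $p_t$ in row $1$ and $q_t$ in row $2$, and relabel so that $p_1<p_2$. For the two semicircles to cross, their endpoint intervals must be linked, which given $p_1<p_2$ forces $p_1<p_2<q_1<q_2$. Then $q_1$ is processed before $q_2$, so at that moment $p_2$ is still available; since $p_2<q_1$, the rule would then have joined $q_1$ to $p_2$, or to an even larger available label, rather than to $p_1$ --- a contradiction. Shifting all rows up by one gives the identical contradiction for two blue arcs, which is the second bullet.

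For the first bullet I would argue that two distinct semicircular arcs are two distinct circles centred on the boundary line, so subtracting their equations pins down a single value of $x$; hence they meet in at most one point of the open upper half-plane, and there they cross transversally, since internal tangency would force their centres to coincide. Consequently, if three arcs all passed through one interior point, each of the three pairs among them would cross there --- but two of any three arcs share a colour, contradicting the second bullet. At a boundary label the bound is immediate, since a row-$1$ label lies on one red arc, a row-$3$ label on one blue arc, and a row-$2$ label on exactly one red and one blue arc.

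For the third bullet I would write an $m$ as $M_t = r_t \cup b_t$ with $r_t=(a_t,v_t)$ red, $b_t=(v_t,c_t)$ blue, and $a_t<v_t<c_t$ in rows $1,2,3$ respectively. Since distinct tableau entries are distinct labels, two $m$'s $M_1,M_2$ share no vertex, so the number of intersection points of $M_1$ and $M_2$ equals the number of crossings among $r_1,b_1,r_2,b_2$. By the second bullet $r_1$ and $r_2$ do not cross and $b_1$ and $b_2$ do not cross, so only the pairs $\{r_1,b_2\}$ and $\{r_2,b_1\}$ can contribute. Assuming $v_1<v_2$ (the case $v_2<v_1$ being symmetric), the arc $r_1$ spans $[a_1,v_1]$ and the arc $b_2$ spans $[v_2,c_2]$ with $a_1<v_1<v_2<c_2$; these intervals are disjoint, the arcs are unlinked, and $r_1$ and $b_2$ do not cross. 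Hence the only crossing that $M_1$ and $M_2$ can have is between $r_2$ and $b_1$, and that occurs at most once --- the third bullet.

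The one genuinely delicate point is the non-crossing property of the same-colour matchings, and within it the step that needs care is the bookkeeping of which labels count as available: being ``already on an arc with a label from the same row as $i$'' should forbid exactly the lower labels already matched within that same matching, and not, for instance, a row-$2$ label merely because it is already used by the red matching. Once that is pinned down, the first and third bullets follow formally.
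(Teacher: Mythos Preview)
Your proof is correct and complete. The paper does not actually prove this lemma; it simply states that the result was proven in Tymoczko's paper \cite{tymoczko2007webs} and records the three claims without argument. So there is nothing to compare your proof against within the present paper.

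Your approach is the natural one: the LIFO property of the matching rule forces each monochromatic arc family to be non-crossing (second bullet), two distinct semicircles on a common baseline meet in at most one upper-half-plane point so a triple intersection would force a same-colour crossing (first bullet), and for two $m$'s the ordering of their middle vertices kills one of the two possible red--blue crossings (third bullet). The delicate point you flag---that ``available'' refers only to the matching currently being built, so a row-$2$ label already used by the red matching is still fresh for the blue one---is exactly the right thing to pin down, and the paper's phrasing ``not already on an arc with another number from the same row as $i$'' supports your reading.
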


Here is the bijection in the reverse direction. 
\begin{defn}
Let $D$ be a valid $m$-diagram (i.e. satisfying Lemma \ref{lem:ms_conditions}) with boundary vertices $\{1,\dots,3n\}$ consisting of $n$ disjoint $m$’s, each written $(i,j,k)$ with $i<j<k$ (first arc $(i,j)$, second arc $(j,k)$).  
Construct a $3\times n$ tableau $\Psi(D)$ by scanning $t=1,\dots,3n$ from left to right:
\[
t\ \mapsto\ 
\begin{cases}
\text{row $1$}, & \text{if $t$ is the start $i$ of some $m$},\\
\text{row $2$}, & \text{if $t$ is the middle $j$ of some $m$},\\
\text{row $3$}, & \text{if $t$ is the end $k$ of some $m$}.
\end{cases}
\]
Entries are appended in order within each row, producing a valid standard Young tableaux of shape $3\times n$. Note that the numbers to the left and below $t$ in $\Psi(D)$ are smaller than $t$.
\end{defn}

We will refer to the $3n$ points on the boundary as steps. They will later correspond to certain steps in a lattice path. Now we review how to produce a web from a $m$-diagram.

\begin{defn}
Let $D$ be an $m$-diagram arising from a standard Young tableaux of shape $3\times n$.
The associated reduced $\mathfrak{sl}_3$ web is obtained by the following steps:
\begin{enumerate}
    \item For each $m=(i,j,k)$, the boundary vertex $j$ has degree two in $D$. Replace $j$ by a ``Y'' vertex, which joins $(i,j)$ and $(j,k)$ at a common interior trivalent vertex.  

    \item 
    Direct both arcs in each $m$ from the boundary toward its trivalent vertex (produced in step $1$).

    \item
    Each crossing of two arcs is a degree-four interior vertex. Replace it by two trivalent vertices connected by a short edge, in the unique way that preserves the given edge orientations.

\end{enumerate}
The resulting planar directed graph has all internal vertices trivalent, all boundary vertices of degree one (edges oriented away from boundary), and every vertex a source or a sink.
\end{defn}

\FloatBarrier

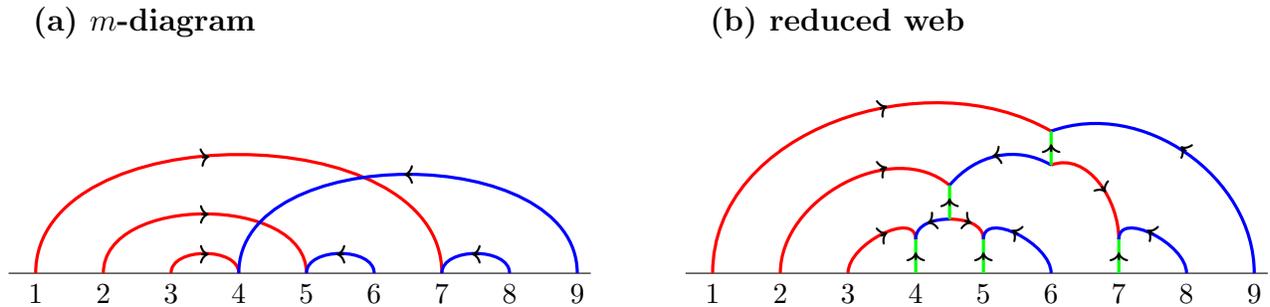
\begin{figure}[ht]
\centering
\begin{tikzpicture}[scale=0.9, every node/.style={font=\small}]
  \begin{scope}[shift={(0,0)}]
    \node[anchor=west,font=\bfseries] at (0.8,3.7) {(a) $m$-diagram};

    \draw (0.6,0) -- (9.2,0);
    \foreach \i in {1,...,9} {
      \draw (\i,0) -- (\i,0.08);
      \node[below] at (\i,0) {\i};
    }

    \begin{scope}[very thick,red]
      \draw (3,0) to[out=90,in=90] (4,0);
      \draw (2,0) to[out=90,in=90] (5,0);
      \draw (1,0) to[out=90,in=90] (7,0);
    \end{scope}

    \begin{scope}[very thick,blue]
      \draw (7,0) to[out=90,in=90] (8,0);
      \draw (5,0) to[out=90,in=90] (6,0);
      \draw (4,0) to[out=90,in=90] (9,0);
    \end{scope}

    \node at (3.5,0.28) {\tikz \draw[->,black,thick,rotate=0] (0,0) -- (0.1,0);};
    \node at (3.5,0.87) {\tikz \draw[->,black,thick,rotate=0] (0,0) -- (0.1,0);};
    \node at (3.5,1.71) {\tikz \draw[->,black,thick,rotate=7] (0,0) -- (0.1,0);};

    \node at (5.5,0.28) {\tikz \draw[->,black,thick,rotate=180] (0,0) -- (0.1,0);};
    \node at (7.5,0.28) {\tikz \draw[->,black,thick,rotate=180] (0,0) -- (0.1,0);};
    \node at (6.5,1.46) {\tikz \draw[->,black,thick,rotate=180] (0,0) -- (0.1,0);};
  \end{scope}

  \begin{scope}[shift={(10,0)}]
    \node[anchor=west,font=\bfseries] at (0.8,3.7) {(b) reduced web};
    \draw (0.6,0) -- (9.2,0);
    \foreach \i in {1,...,9} {
      \draw (\i,0) -- (\i,0.08);
      \node[below] at (\i,0) {\i};
    }

    \begin{scope}[very thick,red]
      \draw (3,0) to[out=90,in=90] (4,0.5);
      \draw (2,0) to[out=90,in=140] (4.5,1.3);
      \draw (5,0.5) to[out=90,in=0] (4.5,0.8);
      \draw (1,0) to[out=90,in=150] (6,2.1);
      \draw (6,1.6) to[out=20,in=90] (7,0.5);
    \end{scope}
    
    \begin{scope}[very thick,blue]
      \draw (7,0.5) to[out=90,in=90] (8,0);
      \draw (5,0.5) to[out=90,in=90] (6,0);
      \draw (4,0.5) to[out=90,in=180] (4.5,0.8);
      \draw (9,0) to[out=90,in=20] (6,2.1);
      \draw (4.5,1.3) to[out=50,in=150] (6,1.6);
    \end{scope}

    \begin{scope}[very thick,green]
      \draw (7,0) to[out=90,in=270] (7,0.5);
      \draw (5,0) to[out=90,in=270] (5,0.5);
      \draw (4,0) to[out=90,in=270] (4,0.5);
      \draw (4.5,0.8) to[out=90,in=270] (4.5,1.3);
      \draw (6,1.6) to[out=90,in=270] (6,2.1);
    \end{scope} 
    \node at (3.5,0.55) {\tikz \draw[->,black,thick,rotate=25] (0,0) -- (0.1,0);};
    \node at (4,0.25) {\tikz \draw[->,black,thick,rotate=90] (0,0) -- (0.1,0);};
    \node at (5,0.25) {\tikz \draw[->,black,thick,rotate=90] (0,0) -- (0.1,0);};
    \node at (7,0.25) {\tikz \draw[->,black,thick,rotate=90] (0,0) -- (0.1,0);};
    \node at (4.5,1.05) {\tikz \draw[->,black,thick,rotate=90] (0,0) -- (0.1,0);};
    \node at (6,1.85) {\tikz \draw[->,black,thick,rotate=90] (0,0) -- (0.1,0);};
    \node at (3.5,1.53) {\tikz \draw[->,black,thick,rotate=20] (0,0) -- (0.1,0);};
    \node at (3.5,2.43) {\tikz \draw[->,black,thick,rotate=20] (0,0) -- (0.1,0);};
    \node at (4.25,0.78) {\tikz \draw[->,black,thick,rotate=200] (0,0) -- (0.1,0);};
    \node at (4.75,0.78) {\tikz \draw[->,black,thick,rotate=-20] (0,0) -- (0.1,0);};
    \node at (5.5,0.58) {\tikz \draw[->,black,thick,rotate=150] (0,0) -- (0.1,0);};
    \node at (7.5,0.58) {\tikz \draw[->,black,thick,rotate=150] (0,0) -- (0.1,0);};
    \node at (6.74,1.28) {\tikz \draw[->,black,thick,rotate=305] (0,0) -- (0.1,0);};
    \node at (5.2,1.75) {\tikz \draw[->,black,thick,rotate=190] (0,0) -- (0.1,0);};
    \node at (8,1.8) {\tikz \draw[->,black,thick,rotate=140] (0,0) -- (0.1,0);};
    \end{scope}
\end{tikzpicture}
\caption{Example of the map from an $m$-diagram (a) to its associated reduced $\mathfrak{sl}_3$ web (b) for a $3\times 3$ tableau. Blue arcs join second-row entries to first-row entries; red arcs join third-row entries to second-row entries. Replacing middle vertices with trivalent Y’s, orienting edges, and resolving crossings yields the reduced web. Arrows denote orientations.}
\label{fig:m-diagram-to-web}
\end{figure}

As seen in Figure \ref{fig:m-diagram-to-web} the web can be obtained by adding a short vertical green edge at each crossing and each middle of an $m$ along the boundary. The red and blue edges have the same orientations in both figures, and the green edges are oriented so that each vertex in the web is a source or sink.

We also see that with the given coloring, each vertex in the web is incident to three edges that all have different colors.

\subsection{Interior Faces in an \texorpdfstring{$m$}{m}-diagram}

In the planar embedding of an $m$-diagram, the arcs divide the upper half–plane into connected regions called \emph{faces}. Then each face in the $m$-diagram becomes a face in the corresponding web. A face is called \emph{interior} if it is not incident to the boundary line (i.e., it is completely enclosed by arcs). 

\begin{lem}\label{lem:face-size-drop-by-two}
Let $D$ be an $m$-diagram, and let $W$ be the associated reduced web. Let $F$ be an interior face of $W$ whose boundary has $2k$ edges, and let $\tilde F$ denote the associated interior face in $D$. Let $p$ and $q$ denote the leftmost and rightmost arc crossings incident to the face $\tilde F$. 
\begin{itemize}
    \item[(1)] Then $p$ and $q$ and are connected by two red/blue alternating arc segment paths, which bound the face $\tilde F$ (one is above the face and one is below).
    \item[(2)] The path above the face $\tilde F$ has one or two arc segments.
    \item[(3)] The total number of arc segments in the two paths is $2k-2$.
\end{itemize}
See Figure \ref{fig:m-diagram-to-web_2}.
\end{lem}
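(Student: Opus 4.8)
The plan is to work inside the $m$--diagram $D$, using the topology of $\tilde F$ together with two geometric features of $m$--diagrams: every arc is a semicircle, so a vertical line meets it in at most one point; and, under the web orientations, a first (red) arc $(i,j)$ runs from its left endpoint $i$ to its right endpoint $j$, while a second (blue) arc $(j,k)$ runs from $k$ to $j$. Since a semicircle is monotone in the horizontal coordinate, this says that a red strand always moves rightward and a blue strand always leftward along its orientation; in particular, at any crossing the red strand points right and the blue strand points left.

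I would first record two preliminaries. (a) Since $\tilde F$ is interior, $\partial\tilde F$ avoids the base line, hence all boundary and all middle vertices, so the only vertices of $D$ on $\partial\tilde F$ are crossings and all edges of $\partial\tilde F$ are arc segments; by Lemma~\ref{lem:ms_conditions} the two arcs at such a crossing have different colours and are distinct, so consecutive segments around $\partial\tilde F$ alternate red/blue. As an arc segment on $\partial\tilde F$ must run between two crossings (it cannot touch the base line, and two distinct $m$'s cross at most once), $\partial\tilde F$ has at least two incident crossings, so its leftmost crossing $p$ and rightmost crossing $q$ are well defined and distinct. (b) The face $\tilde F$ is horizontally convex: every vertical line meets it in an interval. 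I expect this to be a short lemma, proved by tracking the interval cut out of $\tilde F$ by a sweeping vertical line, using that a vertical line meets each arc at most once together with Lemma~\ref{lem:ms_conditions}. Granting (b), for each abscissa $x$ strictly between those of $p$ and $q$ the slice of $\tilde F$ by the vertical line at $x$ is a single segment $[f(x),g(x)]$, so $\partial\tilde F$ is the union of the graph of $g$ (the path above $\tilde F$) and the graph of $f$ (the path below), the two meeting at $p$ and at $q$; with (a) this gives (1).

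For (3): let $m$ be the number of arc segments of $\partial\tilde F$, equal by (a) to the number of crossings on $\partial\tilde F$. Each arc segment becomes a single edge of $\partial F$, and the green edges inserted at middles never bound an interior face, so $2k=m+g$, where $g$ counts the crossings on $\partial\tilde F$ whose resolution contributes a green edge to $\partial F$. At a crossing the resolution joins the two incoming strands at a sink and the two outgoing strands at a source, and the new green edge borders exactly the two sectors bounded by one incoming and one outgoing strand; since the incoming red strand enters from the left and the incoming blue strand from the right, those two ``charged'' sectors are precisely the two opening horizontally, while the remaining two open upward and downward. Equivalently, because a semicircle has a vertical tangent only at its endpoints, which lie off $\partial\tilde F$, a crossing on $\partial\tilde F$ is charged exactly when the abscissa has a local extremum there along $\partial\tilde F$; by (b) the abscissa has exactly two local extrema on the circle $\partial\tilde F$, attained at $p$ and $q$. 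Hence $g=2$, so $m=2k-2$, which is (3).

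Finally (2), which I expect to be the main obstacle. Suppose the upper path carried three consecutive segments $s_1,s_2,s_3$, lying on arcs $A_1,A_2,A_3$; their colours alternate, so $A_1$ and $A_3$ share a colour, and $A_1\neq A_3$ (otherwise $A_2$ would cross $A_1$ at both $A_1\cap A_2$ and $A_2\cap A_3$, contradicting that two $m$'s cross at most once). Being same-coloured, $A_1$ and $A_3$ are nested or disjoint as semicircles, and I would run a case analysis on this configuration. Using that $\tilde F$ lies below all of $s_1,s_2,s_3$, that $s_1$ reaches $p$ and $s_3$ reaches $q$, and that each $s_i$ is a monotone-in-$x$ arc of a semicircle, one determines on which halves of $A_1,A_2,A_3$ the crossings $A_1\cap A_2$ and $A_2\cap A_3$ lie; the resulting span inequalities among $A_1,A_2,A_3$ then force either a second crossing between two of the three $m$'s involved (contradicting Lemma~\ref{lem:ms_conditions}) or an interior face bounded by fewer than six edges (contradicting that $W$ is reduced). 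Hence the upper path has at most two arc segments, and at least one since $p\neq q$; extracting this contradiction from the semicircular geometry is the technical heart of the lemma.
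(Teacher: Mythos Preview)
Your argument rests on the same geometric inputs as the paper---arcs drawn as graphs of strictly concave functions, colours alternating at crossings, and two $m$'s crossing at most once---but you are considerably more explicit at each step. For (3), the paper writes only ``by concavity, the only two green edges incident to $F$ must be the expanded crossings at the points $p$ and $q$''; your orientation analysis (the resolved green edge borders precisely the two horizontally opening sectors, so a crossing on $\partial\tilde F$ contributes a green edge to $\partial F$ iff it is a local $x$-extremum of $\partial\tilde F$, hence only $p$ and $q$ qualify) actually supplies the mechanism the paper leaves out. For (2), the paper's entire argument is ``from [concavity] we conclude that the arc segment path connecting $p$ and $q$ above $\tilde F$ must have one or two total arc segments, as shown in the figure''; your proposed case analysis on whether the same-coloured arcs $A_1,A_3$ are nested or disjoint is a reasonable way to make this honest, and you are right to flag it as the step that carries the content.

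One caution: your route to (1) via horizontal convexity of $\tilde F$ is not quite free. That $\partial\tilde F$ is a simple closed curve built from graph-of-function pieces does not by itself force the enclosed region to be $x$-convex, so your ``short lemma'' genuinely needs the concavity of the arcs (or the nesting of same-coloured arcs), and its content is already close to that of (2). The paper sidesteps this by asserting (1) directly from colour alternation and leaving the above/below dichotomy to the picture. In short, your proposal is sound and, at the points where the paper waves its hands, more rigorous; just be aware that the horizontal-convexity step and the bound in (2) are essentially the same geometric fact, so you should not expect one to be trivially cheaper than the other.
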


\FloatBarrier

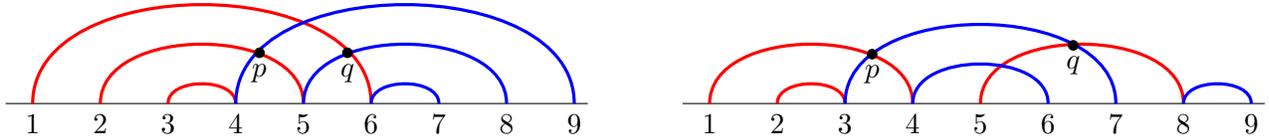
\begin{figure}[ht]
\centering
\begin{tikzpicture}[scale=0.9, every node/.style={font=\small}]
  \begin{scope}[shift={(0,0)}]
    \node[anchor=west,font=\bfseries] at (0.8,3.7) {};

    \draw (0.6,0) -- (9.2,0);
    \foreach \i in {1,...,9} {
      \draw (\i,0) -- (\i,0.08);
      \node[below] at (\i,0) {\i};
    }

    \begin{scope}[very thick,red]
      \draw (3,0) to[out=90,in=90] (4,0);
      \draw (2,0) to[out=90,in=90] (5,0);
      \draw (1,0) to[out=90,in=90] (6,0);
    \end{scope}

    \begin{scope}[very thick,blue]
      \draw (7,0) to[out=90,in=90] (6,0);
      \draw (5,0) to[out=90,in=90] (8,0);
      \draw (4,0) to[out=90,in=90] (9,0);
    \end{scope}
    \filldraw[black] (4.35,0.75) circle (2pt);
    \filldraw[black] (5.65,0.75) circle (2pt);
    \node at (4.35,0.45) {$p$};
    \node at (5.65,0.45) {$q$};
  \end{scope}

  \begin{scope}[shift={(10,0)}]
    \node[anchor=west,font=\bfseries] at (0.8,3.7) {};

    \draw (0.6,0) -- (9.2,0);
    \foreach \i in {1,...,9} {
      \draw (\i,0) -- (\i,0.08);
      \node[below] at (\i,0) {\i};
    }

    \begin{scope}[very thick,red]
      \draw (1,0) to[out=90,in=90] (4,0);
      \draw (2,0) to[out=90,in=90] (3,0);
      \draw (5,0) to[out=90,in=90] (8,0);
    \end{scope}

    \begin{scope}[very thick,blue]
      \draw (3,0) to[out=90,in=90] (7,0);
      \draw (4,0) to[out=90,in=90] (6,0);
      \draw (8,0) to[out=90,in=90] (9,0);
    \end{scope}
    \filldraw[black] (3.4,0.73) circle (2pt);
    \filldraw[black] (6.37,0.86) circle (2pt);
    \node at (3.4,0.43) {$p$};
    \node at (6.37,0.56) {$q$};
  \end{scope}
\end{tikzpicture}
\caption{Interior faces of size $4$ in a $m$-diagram (corresponding to $6$ in the reduced web). Black vertices $p, q$ denote the leftmost and rightmost crossings, which expand to green edges in the web. There are either one or two arc segments above the face.}
\label{fig:m-diagram-to-web_2}
\end{figure}

\FloatBarrier

\begin{proof}
(1) can be seen since only arcs of different colors intersect. In an $m$-diagram, every red/blue arc is drawn as a curve that is the graph of a strictly concave function on the interval between its endpoints. From this we conclude that the arc segment path connecting $p$ and $q$ above $\tilde F$ must have one or two total arc segments as shown in Figure \ref{fig:m-diagram-to-web}. This proves $(2)$. Below the face we have an alternating sequence of red/blue arc segments. Now consider $F$; by concavity, the only two green edges incident to $F$ must be the expanded crossings at the points $p$ and $q$. This means there are $2k-2$ red/blue edges bounding $F$, which means there are $2k-2$ arc segments bounding $\tilde F$. This proves $(3)$.
\end{proof}

\section{Crossing and Face Probabilities} \label{sec:crossing_probabilities}
In this section we use the lattice path model to assign probabilities to 
crossing events in an $m$--diagram. By encoding open arcs as coordinates of the 
walk, each local crossing pattern becomes an axis–hitting problem for a 
two-dimensional random walk. This framework allows us to compute probabilities 
for arc crossings and, in turn, for the occurrence of specific face types.

We consider the uniform measure on the reduced webs in $W_n$.  
As reviewed in \cite{tymoczko2007webs}, this is equivalent to taking the uniform measure 
on $3\times n$ standard Young tableaux and considering their associated webs.  
Each $3\times n$ SYT corresponds bijectively to a lattice path in the upper right quadrant,
\[
S_t = (A_t,B_t) \in \mathbb{Z}_{\ge 0}^2, \qquad t\in [0, 3n]
\]
with allowed steps
\[
s_1 = (1,0),\qquad s_2 = (-1,1),\qquad s_3 = (0,-1),
\]
starting and ending at the origin $(0,0)$.  
The bijection is as follows: read the entries $1,\dots,3n$ of the SYT in increasing order.  
If $i$ lies in the first row, take step $s_1$ at time $i$;  
if $i$ lies in the second row, take step $s_2$;  
and if $i$ lies in the third row, take step $s_3$.  
Because the tableaux is standard, the resulting path never leaves the nonnegative quadrant, 
and it necessarily returns to $(0,0)$ after $3n$ steps. Therefore we can consider the uniform measure on such paths to generate a uniform standard Young tableaux.

We recall Theorem 6 (for steps in directions $(1,0)$, $(-1,1)$ and $(0,-1)$) of \cite{kenyon2015bipolar} regarding the limiting behavior of the lattice path.
\begin{thm}\label{thm:iid_walk}
    As $n\to\infty$, the scaled walk $S_{\lfloor 3n t\rfloor}/\sqrt{3n}$ converges in law (weakly w.r.t the $L^{\infty}$ norm on $[0,1]$ to the Brownian excursion in the nonnegative quadrant starting and ending at the origin, with covariance matrix $\begin{pmatrix}
        2/3 & -1/3\\
        -1/3 & 2/3
    \end{pmatrix}$

    Furthermore, the walk is locally approximately i.i.d.: For any $\epsilon_1>0$ there is an $\epsilon_2>0$ so that as $n\to\infty$, for any sequence of $\epsilon_2n$ consecutive moves that is disjoint from the first or last $\epsilon_1n$ moves, the $\epsilon_2n$ moves are within total variation distance $\epsilon_1$ from an i.i.d sequence, in which the moves $(1,0)$, $(-1,1)$ and $(0,-1)$ each occur with probability $1/3$.
\end{thm}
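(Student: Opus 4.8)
The plan is to recognise the uniform measure on constrained lattice paths in Theorem~\ref{thm:iid_walk} as a random walk bridge conditioned to stay in the first quadrant, to invoke the functional limit theory for random walks in cones for assertion~1, and to run a separate absolute-continuity argument for assertion~2. First I would isolate the underlying i.i.d.\ walk, whose increments are uniform on $\{s_1,s_2,s_3\}$. Because $s_1+s_2+s_3=(0,0)$ this walk has mean zero, and one-step covariance matrix
\[
\Sigma=\tfrac13\bigl(s_1 s_1^{\!\top}+s_2 s_2^{\!\top}+s_3 s_3^{\!\top}\bigr)
=\begin{pmatrix}2/3 & -1/3\\ -1/3 & 2/3\end{pmatrix},
\]
precisely the covariance appearing in the statement. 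The increments are bounded (so all exponential moments exist), $\Sigma$ is nondegenerate, and the walk is aperiodic on the lattice it generates, so the regularity hypotheses of the cone limit theorems hold. The constraint ``stay in $\mathbb Z_{\ge0}^2$ and return to $0$'' is exactly ``bridge in the cone $C=$ first quadrant'': after the linear change of variables standardising $\Sigma$, $C$ becomes a Euclidean wedge of opening angle $\alpha$ with $\cos\alpha=-\operatorname{corr}=1/2$, i.e.\ $\alpha=\pi/3$. Since $\alpha<\pi$ the relevant cone-harmonic function has degree $\pi/\alpha=3$ and everything is non-degenerate; as a sanity check, a length-$3n$ i.i.d.\ walk lies in $C$ and returns to $0$ with probability of order $n^{-(1+\pi/\alpha)}=n^{-4}$, consistent with the hook-length count $\tfrac{2(3n)!}{n!(n+1)!(n+2)!}\asymp 3^{3n}n^{-4}$ of $3\times n$ standard Young tableaux.

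For assertion~1 I would quote the functional limit theorem of Denisov and Wachtel (``Random walks in cones'') in its bridge form: a mean-zero, finite-variance lattice walk conditioned to stay in a cone $C$ and to return to its start, rescaled diffusively, converges in $C[0,1]$ to the $\Sigma$-Brownian motion conditioned to stay in $C$ and to return to $0$. By definition this limiting process is the Brownian excursion in the nonnegative quadrant with covariance $\Sigma$, and weak convergence in $C[0,1]$ is exactly the asserted $L^\infty$-convergence on $[0,1]$. This is Theorem~6 of \cite{kenyon2015bipolar} specialised to our step set, which I would either re-derive along these lines or cite directly.

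For assertion~2 I would fix $\epsilon_1>0$, choose $\epsilon_2=\epsilon_2(\epsilon_1)$ small, and compare laws on a window $I$ of $\epsilon_2 n$ consecutive moves inside $[\epsilon_1 n,\,3n-\epsilon_1 n]$. Writing the conditioned law as the i.i.d.\ law conditioned on $E=\{\text{stay in }C,\ S_{3n}=0\}$, the Radon--Nikodym derivative of the conditioned law of the increments on $I$ against the i.i.d.\ law of those increments is $\rho(w)=\mathbb P(E\mid \text{increments on }I=w)/\mathbb P(E)$, and it suffices to show $\|\rho-1\|_{L^1}\le\epsilon_1$. Splitting $E$ at the two times bracketing $I$ via the Markov property writes numerator and denominator as sums of products of the quadrant-killed transition kernel $p^C_t$ at times $\asymp n$ and points $\asymp\sqrt n$. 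Two inputs then close the argument: (i) by assertion~1 and uniform tightness, with probability $\ge 1-\epsilon_1/3$ the rescaled walk stays at distance $\ge\delta\sqrt n$ from both axes throughout $[\epsilon_1 n/2,\,3n-\epsilon_1 n/2]$, while for all but an exponentially rare set of $w$ the displacement across $I$ is $O(\sqrt{\epsilon_2 n})=o(\delta\sqrt n)$; and (ii) a local central limit theorem for $p^C_t$, with gradient and near-boundary control, shows that perturbing an interior endpoint by $o(\sqrt n)$ or shifting a time argument by $\epsilon_2 n$ changes $p^C_t$ only by a factor $1+o(1)+O(\epsilon_2)$. Combining, $\rho(w)=1+O(\epsilon_2)$ off a set of $w$ of probability $O(\epsilon_1)$, so $\|\rho-1\|_{L^1}$ can be made at most $\epsilon_1$.

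The hard part is input~(ii): one needs two-sided local-limit estimates for $p^C_t(x,y)$ with explicit, uniform error terms, including gradient/H\"{o}lder bounds and the correct $h$-transform behaviour as $x$ or $y$ nears $\partial C$, valid for $t\asymp n$ and $x,y$ in the bulk $\asymp\sqrt n$, and strong enough to convert the heuristic ``a short window is a vanishing fraction of a bulk bridge'' into a genuine total-variation bound. Such estimates are available in the random-walks-in-cones literature (Denisov--Wachtel and its refinements); for this particular step set one can alternatively use exact enumeration, since the Lindstr\"{o}m--Gessel--Viennot / reflection description of $3\times n$ tableaux expresses $p^C_t$ as an explicit alternating sum of products of lattice-path counts whose asymptotics can be read off. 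A secondary point is to make the ``avoids a $\delta\sqrt n$-neighbourhood of the axes'' estimate quantitative and uniform in $n$ (not merely valid for the Brownian limit) before feeding it into the $L^1$ bound on $\rho$; this too follows from the cone estimates but needs care.
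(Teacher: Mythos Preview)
The paper does not prove this theorem at all: it is quoted verbatim as Theorem~6 of \cite{kenyon2015bipolar} and used as a black box. So there is nothing to compare your argument against; you are supplying a proof outline where the paper simply cites one.

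Your sketch is the right shape. The covariance computation is correct, and the standardisation to a $\pi/3$ wedge (with harmonic-function degree $\pi/\alpha=3$ and the $n^{-4}$ sanity check against the $3$-dimensional Catalan asymptotics) is exactly the picture underlying the paper's later Green-function calculations on the $60^\circ$ wedge. Citing Denisov--Wachtel for the bridge-in-a-cone functional limit is the standard route to assertion~1, and is essentially what \cite{kenyon2015bipolar} does in the generality needed here.

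For assertion~2, your Radon--Nikodym scheme is correct in outline, and you have honestly flagged the genuine work: turning the heuristic ``a short bulk window of a bridge is close to i.i.d.'' into a total-variation bound requires a local limit theorem for the cone-killed kernel $p^C_t(x,y)$ with uniform multiplicative error and some regularity in the endpoints. That ingredient is in the Denisov--Wachtel line of results, but it is not free; the alternative you mention, using the explicit determinantal/reflection formula for $3\times n$ tableaux to get exact asymptotics for $p^C_t$, is in fact the cleaner route for this specific step set and is closer to what one finds in \cite{kenyon2015bipolar}. One small point to tighten: the ``stay $\delta\sqrt n$ from the axes'' input should be obtained directly from the discrete cone estimates (or the exact formula), not deduced from assertion~1, to avoid any circularity when you feed it back into the $L^1$ bound on $\rho$.
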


We will use the locally approximately i.i.d property to calculate the probability of certain events in the reduced web.

\subsection{Bijection from subpaths and partial SYT to partial \texorpdfstring{$m$}{m}-diagram}

We now explain how the initial segment of the lattice path, or equivalently the partial tableaux
filled with $1,\dots,t$, corresponds to a partially completed $m$-diagram, and how each new step
in the path updates this diagram in a consistent way. We start at time $0$ with an empty tableaux and empty $m$-diagram (no arcs open).  
Fix $t\ge 1$ and consider the prefix tableaux consisting of the entries $\{1,\dots,t\}$ of $T$ in
their rows (deleting larger entries). Reading $1,2,\dots,t$ in order produces a prefix of the walk
$(A_s,B_s)_{s=0}^t$ with steps
\[
\text{row 1}\ \leftrightarrow\ (1,0),\qquad
\text{row 2}\ \leftrightarrow\ (-1,1),\qquad
\text{row 3}\ \leftrightarrow\ (0,-1),
\]
and, simultaneously, a local piece of the $m$-diagram on boundary steps $\{1,\dots,t\}$.
We build this piece step by step as follows.

Maintain a single stack $\mathsf{S}$ of currently open arcs, where each entry is a pair $(j, C)$
with $j$ the boundary step where the arc started and $C \in \{R,B\}$ its color.
Initialize $\mathsf{S}$ as empty at time $s=0$.

At each time $s = 1, \dots, t$:
\begin{itemize}
  \item \textbf{Row 1 (step $(1,0)$):} Open a red arc at boundary step $s$ and push $(s, R)$ onto the bottom of $\mathsf{S}$.
  \item \textbf{Row 2 (step $(-1,1)$):} Pop the bottom open red arc $(j, R)$ from $\mathsf{S}$ and draw the red semicircle $(j, s)$,
        closing the lowest open red arc. This makes $s$ the middle of an $m$.
        Then open a blue arc at $s$ and push $(s, B)$ onto the bottom of $\mathsf{S}$.
  \item \textbf{Row 3 (step $(0,-1)$):} Pop the bottom open blue arc $(k, B)$ from $\mathsf{S}$ and draw the blue semicircle $(k, s)$,
        closing the lowest open blue arc.
\end{itemize}

\noindent\textit{Example.} If the first five steps place entries in rows $1,2,1,3,2$, then the stack evolves as 
$[(1,R)], [(2,B)], [(2,B), (3, R)], [(3,R)], [(5,B)]$.

\bigskip

Arcs of the same color never cross because we always attach to the most recent unmatched start, while red/blue crossings arise precisely when red/blue arc is closed and there are arcs of the other color open under the closed arc.

Let $R_s$ and $B_s$ be the numbers of open red/blue arcs after time $s$.
These evolve by
\[
\begin{array}{c|ccc}
\text{move at }s & \text{row 1} & \text{row 2} & \text{row 3}\\\hline
\Delta R_s & +1 & -1 & 0\\
\Delta B_s & 0  & +1 & -1
\end{array}
\]
with $(R_0,B_0)=(0,0)$. Thus $R_s=\#\{\text{row 1 up to }s\}-\#\{\text{row 2 up to }s\}$ and
$B_s=\#\{\text{row 2 up to }s\}-\#\{\text{row 3 up to }s\}$. In particular, the prefix tableau on $\{1,\dots,t\}$
determines the local $m$-diagram between boundary steps $1$ and $t$ together with the number of ``dangling''
open arcs recorded by $(R_t,B_t)$. See Figure \ref{fig:3x3-tableau-mdiagram-part} for an example.

\begin{remark}
    In future computations we will start subpaths at $(R_0,B_0)=(1,1)$ so that when either $R_0=0$ or $B_0=0$ the path hits one of the two axes.
\end{remark}

\FloatBarrier

\begin{figure}[ht]
\centering
\begin{tikzpicture}[scale=0.95, every node/.style={font=\small}]
  \begin{scope}[shift={(0,0)}]
    \node[anchor=west,font=\bfseries] at (-0.2,3.7) {(a) Prefix tableaux};
    \foreach \x in {0,1,2} {
      \foreach \y in {0} {
        \draw (\x,\y) rectangle ++(1,1);
      }
    }
    \foreach \x in {0,1} {
      \foreach \y in {1} {
        \draw (\x,\y) rectangle ++(1,1);
      }
    }
    \node at (0.5,0.5) {1};
    \node at (1.5,0.5) {2};
    \node at (2.5,0.5) {3};
    \node at (0.5,1.5) {4};
    \node at (1.5,1.5) {5};
  \end{scope}

  \begin{scope}[shift={(6.2,0)}]
    \node[anchor=west,font=\bfseries] at (-0.2,3.7) {(b) partial $m$-diagram};

    \draw (0.8,0) -- (9.2,0);
    \foreach \i in {1,...,9} {
      \draw (\i,0) -- (\i,0.08);
      \node[below] at (\i,0) {\i};
    }

    \begin{scope}[very thick,red]
      \draw (3,0) to[out=90,in=90] (4,0);
      \draw (2,0) to[out=90,in=90] (5,0);
      \draw (1,0) to[out=90,in=180] (5.3,1.8);
    \end{scope}

    \begin{scope}[very thick,blue]
      \draw (5,0) to[out=90,in=180] (5.3,0.2);
      \draw (4,0) to[out=90,in=180] (5.3,1);
    \end{scope}

    \draw[red,very thick] (0.4,3.2)--(1.2,3.2) node[right,black] {first arcs (row 2 $\to$ row 1)};
    \draw[blue,very thick]  (0.4,2.8)--(1.2,2.8) node[right,black] {second arcs (row 3 $\to$ row 2)};
  \end{scope}
\end{tikzpicture}
\caption{A Prefix Young tableaux (a) and its partial $m$-diagram (b). Consider the step $6$. If $6$ was added to the bottom row we open a red arc at step $6$. If $6$ went in the middle row, we close the highest red arc (which would cross the two open blue arcs) and add an open blue arc at step $6$. If $6$ went in the top row, the bottom blue arc would be closed. The stack is $\mathsf{S}=[(5, B), (4, B), (1, R)]$}
\label{fig:3x3-tableau-mdiagram-part}
\end{figure}
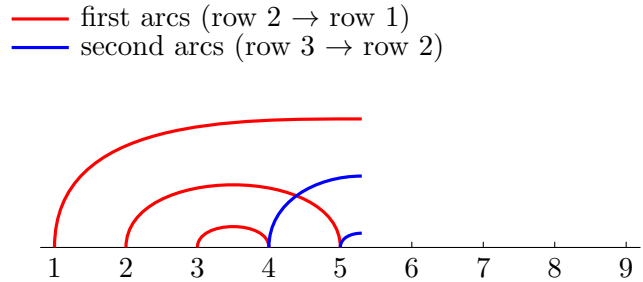

\FloatBarrier

\subsection{Calculating crossing Probabilities}
In this section, we calculate probabilities of certain crossing events in the $m$-diagram for the web. Consider a red arc connecting the steps $(r_1, r_2)$ on the boundary. We say that a blue connecting the steps $(b_1, b_2)$ arc crosses the red arc from the left if $b_1<r_1<b_2<r_2$, and crosses the arc from the right if $r_1<b_1<r_2<b_2$ (and vice versa).

At any point in time there are some (possibly $0$) red arcs and blue arcs that are opened, but not yet closed i.e. still on the stack $\mathsf{S}$. Suppose have one open red arc $R$ on $\mathsf{S}$ as in Figure \ref{fig:initial}, so we have just made a step in the $(1,0)$ direction. Then we will calculate the probability that the next crossing event with $R$ is one of the following three crossings events: $(1)$ $R$ does not cross any more blue arcs \ref{fig:event_1}, $(2)$ a blue arc $B$ crosses $R$ arc from the left \ref{fig:event_2}, $(3)$ the $R$ crosses $k$ more blue arcs from the left \ref{fig:event_3}.

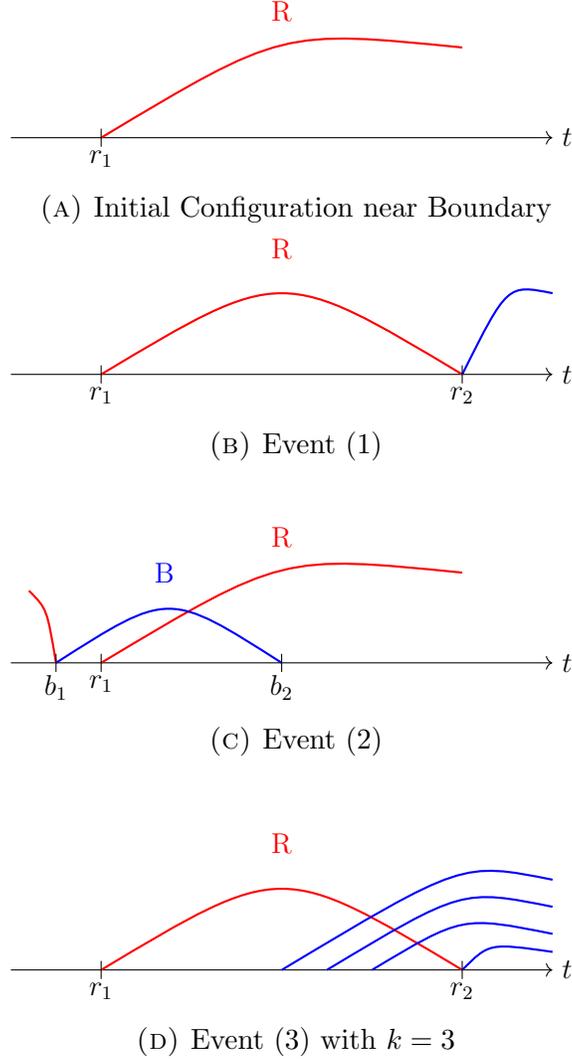
\begin{figure}[ht]
\begin{center}
\begin{subfigure}{\textwidth}
\centering
\begin{tikzpicture}[scale=1.2, every node/.style={scale=0.9}]
  \draw[->] (0,0) -- (6,0) node[right] {$t$};

  \coordinate (r1) at (1,0);
  \coordinate (r2) at (5, 1);

  \draw[red, thick] (r1) .. controls (3,1.2) .. (r2);
  \node[red] at (3,1.4) {R};

  \foreach \x/\label in {1/$r_1$}
    \draw (\x,-0.1) -- (\x,0.1) node[below=5pt] {\label};

\end{tikzpicture}
\caption{Initial Configuration near Boundary}
\label{fig:initial}
\end{subfigure}
\bigskip
\begin{subfigure}{\textwidth}
\centering
\begin{tikzpicture}[scale=1.2, every node/.style={scale=0.9}]
  \draw[->] (0,0) -- (6,0) node[right] {$t$};

  \coordinate (r1) at (1,0);
  \coordinate (r2) at (5,0);
  \coordinate (b1) at (6, 0.9);

  \draw[red, thick] (r1) .. controls (3,1.2) .. (r2);
  \node[red] at (3,1.4) {R};
  \draw[blue, thick] (r2) .. controls (5.5,1) .. (b1);

  \foreach \x/\label in {1/$r_1$, 5/$r_2$}
    \draw (\x,-0.1) -- (\x,0.1) node[below=5pt] {\label};

\end{tikzpicture}
\caption{Event $(1)$}
\label{fig:event_1}
\end{subfigure}

\bigskip
\begin{subfigure}{\textwidth}
\centering
\begin{tikzpicture}[scale=1.2, every node/.style={scale=0.9}]
  \draw[->] (0,0) -- (6,0) node[right] {$t$};

  \coordinate (r1) at (1,0);
  \coordinate (r2) at (5,1);
  \coordinate (b1) at (0.5,0);
  \coordinate (b2) at (3,0);
  \coordinate (r3) at (0.2,0.8);
  \coordinate (r4) at (0.5,0);

  \draw[red, thick] (r1) .. controls (3,1.2) .. (r2);
  \node[red] at (3,1.4) {R};

  \draw[blue, thick] (b1) .. controls (1.75,0.8) .. (b2);
  \node[blue] at (1.7,1) {B};

  \draw[red, thick] (r3) .. controls (0.4,0.6) .. (r4);

  \foreach \x/\label in {0.5/$b_1$, 1/$r_1$, 3/$b_2$}
    \draw (\x,-0.1) -- (\x,0.1) node[below=5pt] {\label};

\end{tikzpicture}
\caption{Event (2)}
\label{fig:event_2}
\end{subfigure}
\bigskip

\begin{subfigure}{\textwidth}
\centering
\begin{tikzpicture}[scale=1.2, every node/.style={scale=0.9}]
  \draw[->] (0,0) -- (6,0) node[right] {$t$};

  \coordinate (r1) at (1,0);
  \coordinate (r2) at (5,0);
  \coordinate (b1) at (3,0);
  \coordinate (b2) at (6,1);
   \coordinate (b3) at (3.5,0);
  \coordinate (b4) at (6,0.7);
  \coordinate (b5) at (4,0);
  \coordinate (b6) at (6,0.4);
  \coordinate (b7) at (5,0);
  \coordinate (b8) at (6,0.2);

  \draw[red, thick] (r1) .. controls (3,1.2) .. (r2);
  \node[red] at (3,1.4) {R};

  \draw[blue, thick] (b1) .. controls (5,1.2) .. (b2);
  \draw[blue, thick] (b3) .. controls (5,0.9) .. (b4);
  \draw[blue, thick] (b5) .. controls (5,0.6) .. (b6);
  \draw[blue, thick] (b7) .. controls (5.3,0.3) .. (b8);

  \foreach \x/\label in {1/$r_1$, 5/$r_2$}
    \draw (\x,-0.1) -- (\x,0.1) node[below=5pt] {\label};

\end{tikzpicture}
\caption{Event (3) with $k=3$}
\label{fig:event_3}
\end{subfigure}
\end{center}
\caption{Crossing Events}
\end{figure}

\FloatBarrier

We will rewrite the probability of those three events in terms of certain discrete harmonic functions $h_a(x, y)$ and $g(z)$. 

\begin{defn}
We define $h_a(x, y)$ as the solution to the discrete Laplace equation $\Delta h_a(x, y)=0$ in the upper right quadrant of $\mathbb Z^2$ with boundary conditions $\delta_a(x, y)$ on the axes $x=0$ or $y=0$ (i.e. $h(x, y)=1$ when $a=(x, y)$ and $0$ otherwise) and \[\Delta f (x, y)=\frac{1}{3}\left[f(x+1, y)+f(x-1, y+1)+f(x, y-1)-3f(x, y)\right]\] $g(x, y)$ is a solution to the same Laplace equation, but a different boundary condition. The boundary conditions are $g(0, y)=0$ and $g(x, 0)=1$. 
\end{defn}

The explicit values of $h_a(z)$ and $g(z)$ are given in Proposition \ref{prop:point_mass_solution} and Lemma \ref{lem:full_boundary_g} by solving the corresponding Dirichlet problems, and obtaining integral formulas. See Example \ref{ex:h(0,2)11} for the computation of $h_{(0,2)}(1,1)$.

\begin{prop} \label{prop:crossing_probabilities_red}
    Suppose we start from step $r_1\in [\epsilon_1 n, (1-\epsilon_1-\epsilon_2)n]$ (i.e. in the interval where local i.i.d. property holds) and suppose at $r_1$ we opened a red arc $R$ (i.e. right after a Type $1$ move \ref{fig:initial}). Then we have the following probabilities of the following three events as $n\to\infty$:
    \begin{itemize}
        \item[(1)] The probability that $R$ does not cross any more blue arcs is $h_{(0,2)}(1,1)=\frac{243\sqrt{3}}{40\pi}-3\approx 0.3493$. See \ref{fig:event_1}
        
        \item[(2)] The probability that $R$ is crossed by another blue arc $B$ from the left whose second endpoint is after the first endpoint of the red arc $R$ is $g(1,1)$. See \ref{fig:event_2}
        \item[(3)] The probability that $R$ crosses another $k$ arcs from the left is $h_{(0, k+2)}(1, 1)$. See \ref{fig:event_3}
    \end{itemize}

\end{prop}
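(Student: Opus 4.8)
The plan is to convert each of the three crossing events for the freshly opened red arc $R$ into an exit event for a two-dimensional walk, and then to recognize the resulting exit probabilities as the Dirichlet solutions $h_a$ and $g$.

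\emph{Step 1: stack dynamics in walk coordinates.} Write $(a,b)=(R_{r_1},B_{r_1})$ for the numbers of open red and blue arcs just after $R$ is opened, so $a\ge 1$ and $R$ is the most recently opened red arc. Because a row-$2$ move pops the bottom-most open red arc regardless of any blue arcs lying below it, the red arcs evolve as a pure LIFO stack; hence $R$ is closed precisely when every red arc opened after $r_1$ has closed and a further row-$2$ move occurs. At that instant $R_s$ drops from $a$ to $a-1$, while it stays $\ge a$ beforehand, so $R$ closes at time $\tau:=\inf\{s>r_1:R_s=a-1\}$. The same argument applied to the blue sub-stack shows that the bottom-most open blue arc $\beta$ at time $r_1$ (present when $b\ge 1$) closes at $\sigma:=\inf\{s>r_1:B_s=b-1\}$, and that no blue arc open at time $r_1$ can close before $\sigma$.

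\emph{Step 2: matching the events to exit events.} Recall a blue arc $(p,q)$, $p<q$, crosses $R=(r_1,\tau)$ iff $p<r_1<q<\tau$ (an arc open at $r_1$ that closes before $\tau$) or $r_1<p<\tau<q$ (an arc opened in $(r_1,\tau)$ still open at time $\tau-1$). Consider the walk $(R_s,B_s)_{s\ge r_1}$ and its first exit from $\{R\ge a\}\cap\{B\ge b\}$ through $\{R=a-1\}$ or $\{B=b-1\}$. If it exits through $\{B=b-1\}$ first then $\sigma<\tau$, the arc $\beta$ closes while $R$ is open, and the next crossing event of $R$ is the left crossing by $\beta$: this is Event~$(2)$. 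If it exits through $\{R=a-1\}$ first then $\tau<\sigma$, no arc open at $r_1$ closes before $\tau$, and $R$ has no left crossings; the closing step is a $(-1,1)$-step, so $B_\tau=B_{\tau-1}+1$, and writing the exit point as $(a-1,b+k+1)$ we get $B_{\tau-1}=b+k$, so there are exactly $k$ blue arcs opened in $(r_1,\tau)$ still open at $\tau$, and these are precisely the $k$ arcs crossed by $R$ in Event~$(3)$ (the pattern $r_1<p<\tau<q$). Thus $k=0$ is Event~$(1)$ and $k\ge 1$ is Event~$(3)$, and since $\sigma\ne\tau$ these cases exhaust all possibilities. Shifting the subpath by $(a-1,b-1)$ so it starts at $(1,1)$ (as in the Remark), and noting that $\Delta$ is exactly the generator of the i.i.d.\ walk with steps $(1,0),(-1,1),(0,-1)$ of probability $1/3$ each, Event~$(1)$ becomes ``exit the quadrant at $(0,2)$'', Event~$(3)$ with parameter $k$ becomes ``exit at $(0,k+2)$'', and Event~$(2)$ becomes ``exit through the axis $y=0$''. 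By the standard probabilistic reading of the associated Dirichlet problems, these have probabilities $h_{(0,2)}(1,1)$, $h_{(0,k+2)}(1,1)$ and $g(1,1)$ for the unconstrained i.i.d.\ walk from $(1,1)$; the closed form $h_{(0,2)}(1,1)=\frac{243\sqrt3}{40\pi}-3$ then comes from Proposition~\ref{prop:point_mass_solution} and Example~\ref{ex:h(0,2)11}.

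\emph{Step 3: passing to the limit, and the main difficulty.} For $r_1$ in the i.i.d.\ window, Theorem~\ref{thm:iid_walk} gives that on any block of $\epsilon_2 n$ consecutive moves the moves are within total variation $\epsilon_1$ of i.i.d.\ uniform; moreover $r_1/(3n)$ lies in a compact subinterval of $(0,1)$, so the Brownian excursion limit forces $\min(R_{r_1},B_{r_1})\to\infty$ in probability, whence with probability tending to $1$ the shifted subpath agrees with the unconstrained walk until it leaves the open quadrant (in particular $b\ge 1$ with high probability, so $\beta$ exists). The main obstacle is that the unconstrained i.i.d.\ walk, though mean zero and hence exiting the quadrant almost surely, has exit time with infinite mean, so one cannot merely say ``the walk exits quickly''. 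Instead I would fix $\varepsilon>0$, choose $M$ so that the unconstrained exit time is $\le M$ with probability $\ge 1-\varepsilon$, then choose $\epsilon_1$, then let $n\to\infty$ so that $M\ll\epsilon_2 n$; the total-variation error on the block $[r_1,r_1+M]$ and the truncation error past time $M$ are then both controlled, and each event probability converges to the stated value. A secondary, purely bookkeeping point is to confirm rigorously the assertions of Steps~1--2 about the shared red/blue stack, in particular that right crossings of $R$ correspond exactly to blue arcs open at time $\tau-1$.
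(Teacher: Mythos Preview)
Your proof is correct and follows essentially the same approach as the paper: encode the arcs below $R$ by a shifted walk starting at $(1,1)$, identify the three events with the walk's first exit from the open quadrant (through $(0,2)$, through $(0,k+2)$, or through the $x$-axis), and then invoke the locally i.i.d.\ approximation together with a.s.\ finiteness of the exit time to pass to the limit. Your treatment is in fact somewhat more careful than the paper's on two points---you make explicit the LIFO argument linking the exit coordinate to the number of crossings, and you spell out the truncation argument (choose $M$ with $\mathbb{P}(\tau_{\mathrm{iid}}>M)<\varepsilon$, then take $n$ large so $M\ll \epsilon_2 n$)---but the underlying idea is the same.
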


\begin{proof}
    Consider the subpath of the lattice walk corresponding to the portion of the web starting from $r_1$. Suppose the walk starts at $X_0=(1,1)$ in $\mathbb Z_{\geq 0}^2$ and takes the steps
    \begin{itemize}
        \item $(1,0)$: open new red arc
        \item $(-1,1)$: close red arc that was opened last and open new blue arc
        \item $(0,-1)$: close the blue arc that was opened last
    \end{itemize}

    Let $X_t=(A_t, B_t)$ denote the location of the walk after $t$ steps. Then $A_t-1$ is the number of open red arcs below (by below we mean opened later) $R$, and $B_t-1$ is the number of blue arcs below $R$. Define $\tau=\inf\{t\geq 0:A_t=0 \textrm{ or } B_t=0\}$. We note that,
\begin{itemize}
    \item If $ A_\tau = 0 $, then $ R $ closes time $\tau$. If $ B_\tau = k $, then $ R $ crossed $ k - 2 $ blue arcs. Note that $ R $ crosses $ k-2 $ blue arcs rather than $ k-1 $, because at time $ \tau $ a blue arc is opened simultaneously as $ R $ closes, which shifts the count by $1$.

    \item If $ B_\tau = 0 $, then the move $\tau$ closed a blue arc that crossed $ R $ from the left (event (2)).
\end{itemize}

The three possible crossing events only occur when the walk hits one of the two axes, so therefore suffices to analyze the path up to $ \tau $.

Since the walk is locally approximately i.i.d.\ (Theorem~\ref{thm:iid_walk}), for any $ \epsilon_1 > 0 $ we can choose $ \epsilon_2$ so that any $\epsilon_2n$ steps (disjoint from the first or last $ \epsilon_1 n $ steps) are within total variation distance $ \epsilon_1 $ of the i.i.d.\ walk where each move is $ (1,0) $, $ (-1,1) $, or $ (0,-1) $ with probability $ 1/3 $.

Moreover, the i.i.d.\ walk hits the boundary in finite time almost surely:
\[
\mathbb{P}(\tau_{\mathrm{iid}}<\infty)=1.
\]
For any $T$ we have $\mathbb{P}(\tau_{\mathrm{iid}}>T)\to 0$ as $T\to\infty$. 
By coupling with the locally approximately i.i.d.\ segment, for any $\epsilon_1>0$,
\[
\mathbb{P}(\tau > T)\;\le\;\mathbb{P}(\tau_{\mathrm{iid}}>T)+\epsilon_1.
\]
Letting $T\to\infty$ (with $T=o(\epsilon_2 n)$), we conclude $\mathbb{P}(\tau=\infty)=0$.

Thus, the walk hits the boundary in finite time almost surely, we may apply the locally approximately i.i.d.\ property to analyze the distribution of the path up to $ \tau $.

    Therefore, the probability of each event corresponds to the probability that the walk from $ (1,1) $ first hits the appropriate boundary:
\begin{itemize}
    \item Event (1): hit $ A=0 $ at $ (0,2) $, so the probability is $ h_{(0,2)}(1,1) $.
    \item Event (2): hit $ B=0 $, so the probability is $ g(1,1) $ where $ g(x,y) $ solves $ \Delta g = 0 $ with $ g(x,0)=1 $, $ g(0,y)=0 $.
    \item Event (3): hit $ A=0 $ at $ (0,k+2) $, so the probability is $ h_{(0,k+2)}(1,1) $.
\end{itemize}

Here, $ h_a(x,y) $ solves
\[
\Delta h_a(x,y) = 0
\]
where
\[
\Delta f(x,y) = \frac{1}{3}\big( f(x+1,y) + f(x-1,y+1) + f(x,y-1) \big) - f(x,y)
\]
in the interior, with boundary condition $ h_a(x,y)=\delta_a(x,y) $ on $ x=0 $ or $ y=0 $.
\end{proof}

\begin{prop} \label{prop:crossing_probabilities_blue}
    Suppose we start from step $b_1\in [\epsilon_1 n, (1-\epsilon_1-\epsilon_2)n]$ and suppose at $b_1$ we have just opened a blue arc $B$ (i.e. right after a Type $2$ move). We calculate the probability of the following three events:
    \begin{itemize}
        \item[(1)] The probability that $B$ does not cross any more red arcs is $h_{(1,0)}(1,1)$
        
        \item[(2)] The probability that $B$ is crossed by red arc $R$ from the left whose second step is after the first step of the blue arc $B$ is $1-g(1,1)$
        \item[(3)] The probability that $B$ crosses another $k$ arcs from the left is $h_{(k+1, 0)}(1, 1)$
    \end{itemize}
\end{prop}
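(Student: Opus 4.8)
The plan is to mirror the proof of Proposition~\ref{prop:crossing_probabilities_red} with the two axes interchanged. Starting immediately after the Type~2 move that opens $B$ at boundary step $b_1$, I track the subpath $X_t=(A_t,B_t)$ of the lattice walk started at $X_0=(1,1)$, where $A_t-1$ counts the currently open red arcs opened after $B$ (lying ``below'' $B$ in the stack $\mathsf{S}$) and $B_t-1$ counts the open blue arcs below $B$; the three move types act by $(1,0)$, $(-1,1)$, $(0,-1)$ exactly as before. Let $\tau=\inf\{t\ge 0: A_t=0\text{ or }B_t=0\}$.

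The first step is to set up the combinatorial dictionary between $X_\tau$ and the crossing pattern of $B$. If $B_\tau=0$, the move at time $\tau$ is a Type~3 move that closes $B$ itself (there being no open blue arc below it), so $B=(b_1,b_1+\tau)$; I claim the $A_\tau-1$ red arcs still open at time $\tau$ are precisely the red arcs $R$ with $b_1<r_1<b_1+\tau<r_2$, i.e. exactly the red arcs that $B$ crosses from the left. Indeed, a red arc opened during the subpath has $b_1<r_1<b_2$ and crosses $B$ iff it is still open when $B$ closes; and because $A$ never reaches $0$ before $\tau$, no red arc opened before $B$ is closed during the subpath, so such arcs (as well as red arcs opened after $B$ closes) never cross $B$. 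Hence $\{X_\tau=(k+1,0)\}$ is exactly the event ``$B$ crosses precisely $k$ further red arcs from the left and then closes'': $k=0$ is event~(1), and general $k$ is event~(3). If instead $A_\tau=0$, the move at $\tau$ is a Type~2 move closing a red arc $R$ that was opened before $B$; since $B$ is still open we get $r_1<b_1<r_2<b_2$, so $R$ crosses $B$ from the left, with $r_2=b_1+\tau>b_1$, which is event~(2). Unlike the red case there is no extra $\pm1$ bookkeeping shift for (1) and (3), since the move that closes $B$ opens nothing.

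The remaining steps are transcriptions of the red-case argument. By the local i.i.d.\ property (Theorem~\ref{thm:iid_walk}) the subpath is, up to total-variation error $\epsilon_1$, the i.i.d.\ walk on $\mathbb{Z}_{\ge0}^2$ with the three moves of probability $1/3$ each; this walk a.s.\ hits $\{A=0\}\cup\{B=0\}$ in finite time, and the same truncation argument (letting $T\to\infty$ with $T=o(\epsilon_2 n)$) gives $\mathbb{P}(\tau=\infty)=0$, so all three events are measurable with respect to the path up to $\tau$. The exit distribution of this walk from $(1,1)$ is then read off from the harmonic functions: $\mathbb{P}_{(1,1)}(X_\tau=(k+1,0))=h_{(k+1,0)}(1,1)$ by definition of $h_a$, which gives (1) and (3); and since $(x,y)\mapsto \mathbb{P}_{(x,y)}(B_\tau=0)$ is the bounded discrete-harmonic function equal to $1$ on $\{y=0\}$ and $0$ on $\{x=0\}$, it coincides with $g$, so $\mathbb{P}_{(1,1)}(A_\tau=0)=1-g(1,1)$, which gives (2).

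The only real content is the dictionary in the second paragraph: checking that $A_\tau-1$ counts the relevant red crossings with no omission and no double-counting, and that the red arc closed in the $A_\tau=0$ case genuinely crosses $B$ from the left. Everything else follows the proof of Proposition~\ref{prop:crossing_probabilities_red} with the roles of the two axes swapped.
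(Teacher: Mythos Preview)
Your proof is correct and takes exactly the approach the paper indicates: the paper's own proof is a two-sentence remark that Proposition~\ref{prop:crossing_probabilities_red} goes through with the roles of red and blue (equivalently, the two axes) interchanged, and you have carried this out in detail, including the correct observation that the $+1$ bookkeeping shift from the red case disappears here because the Type~3 move that closes $B$ opens no new arc.
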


\begin{proof}
    The proof proceeds analogously to Proposition~\ref{prop:crossing_probabilities_red}, with the roles of red and blue arcs reversed. The walk starts at $(1,1)$, and the axis-hitting probabilities correspond to closing $ B $ without further crossings (event (1)), being crossed by a red arc (event (2)), or crossing additional arcs (event (3)). The hitting probabilities follow from the same discrete Laplace equation with appropriate boundary conditions.

\end{proof}

\FloatBarrier

\begin{figure}[ht]
\begin{center}
\begin{tikzpicture}[scale=1.2, every node/.style={scale=0.9}]
  \draw[->] (0,0) -- (6,0) node[right] {$t$};

  \coordinate (r1) at (1,0);
  \coordinate (r2) at (5.6, 1);
  \coordinate (r3) at (2, 0);
  \coordinate (r4) at (3,0);
  \coordinate (b1) at (3, 0);
  \coordinate (b2) at (4,0);

  \coordinate (r5) at (1.5, 0);
  \coordinate (r6) at (5.6,0.9);
  \coordinate (r7) at (4.3, 0);
  \coordinate (r8) at (5.3,0);
  \coordinate (b3) at (5.3, 0);
  \coordinate (b4) at (5.6,0.8);

  \draw[red, thick] (r1) .. controls (3,1.2) .. (r2);
  \node[red] at (3,1.4) {R};
  \draw[red, thick] (r3) .. controls (2.5,0.4) .. (r4);
  \draw[blue, thick] (b1) .. controls (3.5,0.4) .. (b2);
  \draw[red, thick] (r5) .. controls (3.6,1.1) .. (r6);
  \draw[red, thick] (r7) .. controls (4.8,0.4) .. (r8);
  \draw[blue, thick] (b3) .. controls (5.4,0.4) .. (b4);

  \foreach \x/\label in {1/$r_1$}
    \draw (\x,-0.1) -- (\x,0.1) node[below=5pt] {\label};

\end{tikzpicture}
\end{center}
\caption{Subpath starting at $(2, 2)$}
\label{fig:example_1}
\end{figure}
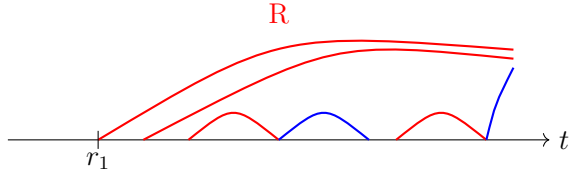

\FloatBarrier

\begin{remark} \label{rem:crossing_starts}
We can generalize Proposition \ref{prop:crossing_probabilities_red} and \ref{prop:crossing_probabilities_blue} to the case where the open arc $R$ or $B$ has other open arcs underneath. For example, in Figure \ref{fig:example_1} we could obtain probabilities analogous to Proposition \ref{prop:crossing_probabilities_red} for crossing events of $R$ by starting the subpath at $(2,2)$ (instead of $(1,1)$), which corresponds to the two additional arcs under $R$.
\end{remark}

\subsection{Calculating face Probabilities}

Recall that by Lemma \ref{lem:face-size-drop-by-two}, any internal face of size $2k$ in a reduced web corresponds to an internal face in the associated $m$-diagram that is surrounded by $2k - 2$ arcs, alternating between red and blue. 

When we construct the web from the $ m $-diagram:
\begin{itemize}
    \item We add a green edge at each red-blue crossing in the $m$-diagram. Call these edges \emph{crossing edges}.
    \item The original  red/blue arcs of the $m$-diagram become \emph{arc edges} in the web.
\end{itemize}

Also by Lemma \ref{lem:face-size-drop-by-two}. Any internal face of size $2k$ in the web contains exactly two crossing edges and $2k - 2$ arc edges, coming from the $2k - 2$ arcs that surround the face, and the two crossing edges are connected by either one or two arc edges that lie \emph{above} the face in the $m$-diagram. The remaining arc edges (either $2k - 2$ or $2k - 3$) connect the crossing edges \emph{below} the face in the $m$-diagram.

\begin{figure}[ht]
\begin{center}
\begin{tikzpicture}[scale=1.2, every node/.style={scale=0.9}]
  \draw[->] (0,0) -- (6,0) node[right] {$t$};

  \coordinate (r1) at (0.5,0);
  \coordinate (r2) at (5.5, 0);
  \coordinate (r3) at (1, 0);
  \coordinate (r4) at (2.5,0);
  \coordinate (r5) at (3.5, 0);
  \coordinate (r6) at (5,0);
   \coordinate (r7) at (3.8, 0);
  \coordinate (r8) at (4.25,0);
  \coordinate (b1) at (2.5,0);
  \coordinate (b2) at (4,0);
  \coordinate (b3) at (5.5,1);
  \coordinate (b4) at (5,0);
  \coordinate (b5) at (0.5,1);
  \coordinate (b6) at (1.5,0);
  \coordinate (b7) at (4.25,0);
  \coordinate (b8) at (4.7,0);

  \draw[red, thick] (r1) .. controls (3,2) .. (r2);
  \draw[red, thick] (r3) .. controls (1.75,0.8) .. (r4);
  \draw[blue, thick] (b1) .. controls (3.25,0.8) .. (b2);
  \draw[blue, thick] (b3) .. controls (5.25,1) .. (b4);
  \draw[red, thick] (r5) .. controls (4.25,0.8) .. (r6);
  \draw[blue, thick] (b5) .. controls (1,0.8) .. (b6);
  \draw[blue, thick] (b7) .. controls (4.475,0.4) .. (b8);
  \draw[red, thick] (r7) .. controls (4.025,0.4) .. (r8);

  \fill[green!50!black] (1.3,0.3) circle (2pt);
  \fill[green!50!black] (2.5,0) circle (2pt);
  \fill[green!50!black] (5,0) circle (2pt);
  \fill[green!50!black] (3.76,0.28) circle (2pt);
\end{tikzpicture}
\caption{Internal face of size $8$ with one arc edge above and $5$ arc edges below.
}
\label{fig:example_face}
\end{center}
\end{figure}
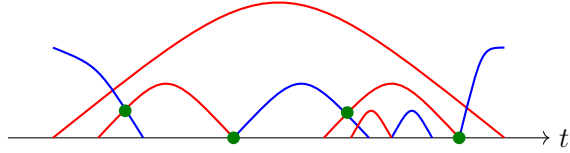

\begin{defn}[Face Type] \label{defn:face_type}
    Let $F$ be an interior face of an $m$-diagram. Let $P(F) = \{ p_1, \dots, p_k \}$ be the ordered sequence of intersection points of arc edges below $F$, listed from left to right. For example, in Figure \ref{fig:example_face} there are four such intersection points marked in green. The size of $ F $ (that is, the number of sides of the face) satisfies:
\[
|F| =
\begin{cases}
|P(F)| + 4 & \text{if } |P(F)| \text{ is even}, \\
|P(F)| + 5 & \text{if } |P(F)| \text{ is odd}.
\end{cases}
\] 
The \textbf{face type} of an internal face $F$ in an $m$-diagram is defined as follows. At each intersection point $p_i$, let $R_i$ and $B_i$ denote the two arcs that cross at $p_i$. For each $i$, consider the segment $A_i$ that starts at $p_i$ and travels along the arc toward the boundary of the diagram in the southeast direction. The value $c_i$ is defined as the number of other arcs that intersect $A_i$ (including the arc $B_i$). The face type of $F$ is then the tuple $(c_1,\dots,c_k)$ together with the color of $A_1$, which is called the starting color and is either $R$ or $B$. For example, in Figure~\ref{fig:example_face}, the face type is $(1,1,2,1), B$.

\end{defn}

Recall that the number of reduced webs of size $n$ is given by the $3$D-Catalan numbers \[|W_n|=\frac{2\cdot(3n)!}{(n+2)!(n+1)!n!}\] 

First we consider near the ends of the path i.e. in the interval $[0,\epsilon_1n)\cup((1-\epsilon_1-2\epsilon_2)n, 0]$. In this case the path is not locally i.i.d., but we show that the number of faces in this case is negligible.

\begin{lem} For each face $F$ let $s_1(F)$ denote the first step of an arc incident to $F$. Then,

\smallskip
(i) For each fixed web and each $s$, there is at most one type $\tau$-face whose first step is $s$.

\smallskip
(ii) Let $A_\tau$ be the total number of type $\tau$ faces across all webs in $W_n$, and let $A_\tau^{\mathrm{mid}}$ be the number of faces $F$ of type $\tau$ such that $s_1(F)\in[\epsilon_1 n,(1-\epsilon_1-2\epsilon_2)n]$. Then
\[
0\;\le\;A_\tau - A_\tau^{\mathrm{mid}}\;\le\; 2(\epsilon_1+\epsilon_2) n\cdot |W_n|.
\]
Consequently, after normalizing by $n\,|W_n|$ as in Theorem~\ref{thm:face_probabilities}, and excluding the first/last $(\epsilon_1+\epsilon_2) n$ steps changes the limiting frequency by $O(\epsilon_1+\epsilon_2)$.
\end{lem}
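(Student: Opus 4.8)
The plan is to deduce both parts from a single reconstruction principle: inside a fixed web, the pair $(s_1(F),\tau)$ determines $F$. For (i), fix a reduced web $W$ with $m$-diagram $D$ and a step $s$, and suppose $F$ is an interior face with $s_1(F)=s$ and face type $\tau$. The first observation is that each boundary step is the left endpoint of at most one arc of $D$: a row-$1$ step is the left endpoint of exactly one (red) first arc, a row-$2$ step is the left endpoint of exactly one (blue) second arc, and a row-$3$ step is the left endpoint of no arc. Hence, once we know $s_1(F)=s$ and the color recorded in $\tau$ (the starting color, i.e.\ the color of the arc $A_1$ of Definition~\ref{defn:face_type}), the arc $A_1$ bounding $F$ is completely pinned down. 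Next, the first coordinate $c_1$ of $\tau$ locates the leftmost crossing $p_1$ of $F$ on $A_1$: since every $m$-diagram arc is drawn as a concave semicircle, the segment $A_1$ of Definition~\ref{defn:face_type} runs monotonically from $p_1$ to the right endpoint of the underlying arc, and $c_1$ is one more than the number of crossings on that sub-arc, so $p_1$ is the $c_1$-th crossing along $A_1$ counted back from its right endpoint -- a well-defined point by Lemma~\ref{lem:ms_conditions}. Finally, $F$ is the unique face incident to $A_1$ at $p_1$ having $p_1$ as its leftmost corner, and the remaining coordinates of $\tau$ merely retrace and confirm $\partial F$. Every step of this recovery is forced, so at most one type-$\tau$ face has $s_1(F)=s$, which is (i).

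For (ii), the inequality $0\le A_\tau-A_\tau^{\mathrm{mid}}$ is immediate, since the faces counted by $A_\tau^{\mathrm{mid}}$ form a sub-collection of those counted by $A_\tau$. For the upper bound, note that $A_\tau-A_\tau^{\mathrm{mid}}$ counts, over all $W\in W_n$, the type-$\tau$ faces $F$ of $W$ with $s_1(F)\notin[\epsilon_1 n,(1-\epsilon_1-2\epsilon_2)n]$. By (i), each fixed $W$ contributes at most one such face per excluded step, hence at most $2(\epsilon_1+\epsilon_2)n$ faces in total; summing over the $|W_n|$ webs gives $A_\tau-A_\tau^{\mathrm{mid}}\le 2(\epsilon_1+\epsilon_2)n\,|W_n|$. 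Since Theorem~\ref{thm:face_probabilities} normalizes $A_\tau$ by $n\,|W_n|$, restricting to faces with $s_1(F)$ in the middle range perturbs the limiting frequency of type-$\tau$ faces by at most $2(\epsilon_1+\epsilon_2)=O(\epsilon_1+\epsilon_2)$, which vanishes as $\epsilon_1,\epsilon_2\to 0$.

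The main obstacle is the reconstruction step in (i), and within it the claim that $c_1$ pins down $p_1$: one must verify that the southeast arm of $A_1$ issuing from $p_1$ genuinely reaches the right endpoint of the underlying arc without turning back, so that the number of crossings lying between $p_1$ and that endpoint is a meaningful and injective labeling of the crossings on $A_1$. This is exactly where the concavity of the semicircular arcs and Lemma~\ref{lem:ms_conditions} (only differently colored arcs cross, and at most two at a point) are used; one must also check that the face at $p_1$ with $p_1$ as its leftmost corner and $A_1$ on its boundary is unambiguous. A secondary, purely bookkeeping point is that the admissible values of $s_1(F)$ occupy $O(n)$ of the boundary steps -- effectively one per $m$ of the tableau -- and that the complement of the middle interval meets at most $2(\epsilon_1+\epsilon_2)n$ of them in the step-indexing of Theorem~\ref{thm:iid_walk}, so the displayed constant is correct. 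Once (i) is established, part (ii) and the normalization claim are routine counting.
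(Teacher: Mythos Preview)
Your proof is correct and follows essentially the same approach as the paper's: both establish (i) by showing that the data $(s,\tau)$ forces the arc opened at $s$, then the successive crossings, and hence the face; and both derive (ii) from (i) by the obvious counting over excluded steps. Your version is simply more explicit about the reconstruction in (i) --- spelling out that $c_1$ locates $p_1$ as the $c_1$-th crossing back from the right endpoint of the arc, and that the face at $p_1$ with $p_1$ as leftmost corner is then pinned down --- whereas the paper compresses this into the single sentence ``the successive crossings given by $\tau$ are forced.''
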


\begin{proof}
(i) Fix a boundary step $s$ and a type $\tau=(\tau_1,\ldots,\tau_k;C)$. If a type-$\tau$ face $F$ has $s_1(F)=s$, then starting from the arc opened at step $s$ (of color $C$), the successive crossings given by $\tau$ are forced, so arcs of the same color are nested, opposite colors are the only crossings, and any two $m$’s cross at most once. Hence there is either no such face or a unique one, proving the claim.
 
(ii) By (i), each first step in the interval $[0,\epsilon_1 n)\cup ((1-\epsilon_1-2\epsilon_2)n, n]$ contributes at one type $\tau$ face per web. Therefore, the number of type $\tau$ faces whose first step belongs to the boundary interval per web is at most $2(\epsilon_1+\epsilon_2) n$, and across all webs at most $2\varepsilon_1 n\cdot |W_n|$. Dividing by $n\,|W_n|$ shows the normalized contribution of excluded steps is $O(\epsilon_1+\epsilon_2)$. 
\end{proof}

\begin{remark}
The mid-window keeps the first step $s_1(F)$ of face $F$ away from the first $\epsilon_1 n$ steps and last $(\epsilon_1+2\epsilon_2)n$ steps. Thus, all $\epsilon_2 n$–length local windows used in the crossing analysis lie in a region where the walk is locally i.i.d. Note there may be several such windows, since we consider multiple consecutive crossing events. However, these crossing events take finite time a.s. and we have added an additional $\epsilon_2n$, which goes to $\infty$ to account for this. Because each face type involves only finitely many crossing events, the error from the locally i.i.d.\ approximation 
remains $O(\epsilon_1+\epsilon_2)$, so the limiting face probabilities are unaffected.
\end{remark}

Let $ T(\tau, n) $ denote the total number of interior faces of type $ \tau $ among all reduced webs in $ W_n $, where 
\[
\tau = (\tau_1, \dots, \tau_k), \; C
\]
with $ C = R $ or $ B $ denoting the starting color of the face type. In the following proposition we will use $h_a$ and $g$ to denote certain discrete harmonic functions. See Proposition \ref{prop:point_mass_solution} and Lemma \ref{lem:full_boundary_g} for the explicit formulas for $h_a$ and $g$.

\begin{thm}
\label{thm:face_probabilities}
Let $\tilde g_1(\tau_k)=g(\tau_k, 1)$ if $k$ is even and $\tilde g_1(\tau_k)=1-g(\tau_k, 1)$ if $k$ is odd. Similarly, let $\tilde g_2(\tau_k)=1-g(\tau_k, 1)$ if $k$ is even and $\tilde g_2(\tau_k)=g(\tau_k, 1)$ if $k$ is odd. Then, as $ n \to \infty $:
\begin{itemize}
    \item If $ C = R $,
    \[
    \lim_{n \to \infty} \frac{T(\tau, n)}{n \cdot |W_n|} 
    = h_{(0, \tau_1 + 3)}(1,1) \tilde g_1(\tau_k)
      \prod_{\substack{i=2 \\ i \text{ even}}}^{k} h_{(\tau_i + 1, 0)}(1,\tau_{i-1})
      \prod_{\substack{i=2 \\ i \text{ odd}}}^{k} h_{(0, \tau_i + 2)}(\tau_{i-1},1).
    \]
    
    \item If $ C = B $,
    \[
    \lim_{n \to \infty} \frac{T(\tau, n)}{n \cdot |W_n|} 
    = h_{(\tau_1 + 2, 0)}(1,1) \tilde g_2(\tau_k)
      \prod_{\substack{i=2 \\ i \text{ odd}}}^{k} h_{(\tau_i + 1, 0)}(1,\tau_{i-1})
      \prod_{\substack{i=2 \\ i \text{ even}}}^{k} h_{(0, \tau_i + 2)}(\tau_{i-1},1).
    \]
\end{itemize}
\end{thm}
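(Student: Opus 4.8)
The plan is to count, for each web and each boundary step $s$, the (unique, by the preceding lemma) interior face of type $\tau$ whose first incident arc opens at $s$, and to show that summing over webs and normalizing by $n\,|W_n|$ converges to the stated product of discrete harmonic function values. By the reduction already carried out, it suffices to restrict to first steps $s$ in the mid-window $[\epsilon_1 n,(1-\epsilon_1-2\epsilon_2)n]$, where Theorem \ref{thm:iid_walk} gives the locally approximately i.i.d.\ description of the walk; the boundary region contributes $O(\epsilon_1+\epsilon_2)$ and is discarded at the end by letting $\epsilon_1,\epsilon_2\to 0$. So fix the starting color $C$ (say $C=R$; the $C=B$ case is verbatim with red/blue interchanged, using Proposition \ref{prop:crossing_probabilities_blue} in place of \ref{prop:crossing_probabilities_red}).

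The combinatorial heart is to decompose a type-$\tau$ face as a sequence of independent crossing events along the lattice walk. Starting from the arc opened at step $s$, the face type $\tau=(\tau_1,\dots,\tau_k;R)$ prescribes, reading left to right, the successive crossings: the first arc $A_1$ crosses $\tau_1$ arcs before the face is entered, then at each intersection point $p_i$ the arc $A_i$ is crossed by $\tau_i$ arcs of the opposite color as one follows it toward the boundary. I would formalize this as follows: each crossing point $p_i$ opens a new subpath that begins where the previous one ended, and by the \emph{local} i.i.d.\ property (each $\epsilon_2 n$-window is within total variation $\epsilon_1$ of i.i.d.), consecutive subpaths become asymptotically independent. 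The first factor $h_{(0,\tau_1+3)}(1,1)$ records the event that the opening arc $R$ undergoes exactly $\tau_1$ crossings before closing into the face, via the axis-hitting computation of Proposition \ref{prop:crossing_probabilities_red}(3) (the shift $\tau_1+3$ rather than $\tau_1+2$ accounts for the extra arc that closes the face itself). Each interior factor $h_{(\tau_i+1,0)}(1,\tau_{i-1})$ or $h_{(0,\tau_i+2)}(\tau_{i-1},1)$ is a crossing probability of the form in Remark \ref{rem:crossing_starts}: the subpath starts at height $\tau_{i-1}$ (the arcs stacked underneath, inherited from the previous crossing count) and at the base coordinate $1$, and must hit the appropriate axis at the point encoding $\tau_i$ further crossings; the color of $A_i$ alternates with $i$, which is exactly why the parity of $i$ selects which of the two families of $h$-factors appears. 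The final factor $\tilde g_1(\tau_k)$ (or $\tilde g_2$) handles the last segment, where the relevant event is being crossed \emph{from the left} by an arc whose far endpoint lies past the start — i.e.\ a $g$-type rather than $h$-type axis-hitting event (Proposition \ref{prop:crossing_probabilities_red}(2)), again with the color/parity bookkeeping deciding between $g$ and $1-g$.

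Assembling: for each $s$ in the mid-window, the probability that the unique candidate type-$\tau$ face actually occurs converges, as $n\to\infty$, to the displayed product, because the subpath events are asymptotically independent and each converges to its $h$- or $g$-value by Propositions \ref{prop:crossing_probabilities_red}--\ref{prop:crossing_probabilities_blue} and Remark \ref{rem:crossing_starts}. Summing the indicator over the $\approx n$ mid-window steps and over the $|W_n|$ webs, then dividing by $n\,|W_n|$, the count $T(\tau,n)/(n|W_n|)$ converges to the product up to the $O(\epsilon_1+\epsilon_2)$ boundary error; letting $\epsilon_1,\epsilon_2\to 0$ finishes it.

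The main obstacle I expect is controlling the \emph{asymptotic independence of the successive subpaths} rigorously. Each individual crossing event takes finite time almost surely, but the face type chains finitely many of them, so one must ensure (a) that with high probability all the windows involved stay inside the locally-i.i.d.\ region — handled by the extra $\epsilon_2 n$ slack, which $\to\infty$ — and (b) that conditioning on the outcome of one crossing event only perturbs the law of the next subpath's starting configuration through the bounded height parameter $\tau_{i-1}$, not through long-range correlations of the walk. The total-variation coupling in Theorem \ref{thm:iid_walk} gives this for a single window; propagating it across the finitely many chained windows with a union bound, while tracking that the composite event has the product form, is the delicate step. Everything else — the face-size/crossing-count bookkeeping, the parity-to-color correspondence, and the identification of each factor with an $h$- or $g$-value — is routine given Lemma \ref{lem:face-size-drop-by-two}, Definition \ref{defn:face_type}, and the crossing-probability propositions.
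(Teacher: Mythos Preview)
Your proposal is correct and follows essentially the same route as the paper: reduce to the mid-window via the preceding lemma, then read off the face type $\tau$ as a chain of axis-hitting events for the lattice walk and identify each factor with an $h$- or $g$-value via Propositions~\ref{prop:crossing_probabilities_red}--\ref{prop:crossing_probabilities_blue} and Remark~\ref{rem:crossing_starts}.

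The one place you make life harder than necessary is what you flag as ``the delicate step'': establishing asymptotic independence of the successive subpaths by chaining separate $\epsilon_2 n$-windows and union-bounding the total-variation errors. The paper avoids this entirely. Once the locally i.i.d.\ coupling of Theorem~\ref{thm:iid_walk} is in force on a single window containing all the relevant crossing events (which is fine, since there are only finitely many and each takes finite time a.s.), the coupled walk is an honest i.i.d.\ Markov chain, and the paper simply invokes the \emph{strong Markov property} of $(A_t,B_t)$ at the successive axis-hitting stopping times. This gives the exact product factorization in one stroke, with no need to treat each crossing as living in its own window or to control long-range correlations across windows. Your concern (b) about ``conditioning on the outcome of one crossing event'' perturbing the next subpath's law is precisely what the Markov property handles: the only information passed forward is the hitting location, which is exactly the $\tau_{i-1}$ that determines the next starting point.
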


\begin{proof}
Let $A$ be an arc in an $m$-diagram corresponding to a reduced web in $W_n$. By the strong Markov property of $(A_t, B_t)$, the probability of seeing a certain sequence of crossings specified by $\tau$ is given by the product of successive axis hitting probabilities. We compute the probability that $A$ is the first arc edge beneath a face of type $\tau$.

This is done by successively applying Propositions~\ref{prop:crossing_probabilities_red} and~\ref{prop:crossing_probabilities_blue} to determine the probability of the sequence of crossings specified by $\tau$.
\begin{itemize}
    \item The probability for the first arc (depending on whether $C = R$ or $C = B$) accounts for the number of crossings at the first intersection point. This contributes the first factor in the product.
    \item Each subsequent $c_i$ contributes a factor corresponding to the probability of seeing that number of crossings at below the $i$-th intersection. Note that this probability depends on $\tau_{k-1}$, which tells us where to start the next lattice subpath.
    \item The final term involving $g$ accounts for the probability that the last arc edge above the face crosses the last arc edge below the face
\end{itemize}

Note that we evaluate $h$ and $g$ at the points $(1, \tau_{i-1})$ or $(\tau_{i-1}, 1)$ $(i\geq 2)$ to account for the number of open arcs from the previous crossing event that are under the current arc. This is the example in Remark \ref{rem:crossing_starts}.

As an illustration, in Figure~\ref{fig:example_face}, where  $C = B$ and $k = 4$, we begin with the leftmost blue arc. The probability that it crosses two red arcs is $h_{(3,0)}(1,1)$. The next red arc crossing no blue arcs contributes $h_{(0,2)}(1,1)$. The next blue arc crosses two red arcs with probability $h_{(3,0)}(1,1)$. The next red arc crosses no blue arcs with probability
$h_{(0, 2)}(2, 1)$ (note that we start at $(2, 1)$). Finally, the probability that the last blue arc is crossed by the red arc above is $1-g(1,1)$. Taking the product of the probabilities gives the formula above.

\[
h_{(3,0)}(1,1) \cdot h_{(0,2)}(1,1)\cdot h_{(3,0)}(1,1)\cdot  h_{(0, 2)}(2, 1)\cdot (1-g(1,1))
\]
\end{proof}

\section{Green Function} \label{sec:green_function}
To evaluate the crossing probabilities from Section~\ref{sec:crossing_probabilities}, we need 
explicit formulas for the discrete harmonic functions $h_a$ and $g$. These can be 
expressed in terms of Green’s functions on the triangular lattice and in wedge 
domains. This section derives these formulas.

Let $G_\infty(z, z_0)$ denote the Green's function on the infinite equilateral (triangular) lattice, where $z, z_0 \in \mathbb{C}$. Let $G_W(z, z_0)$ denote the Green's function in the wedge $W \subset \mathbb{C}$, which is the 60$^\circ$ sector between the two rays with angles $0$ and $\pi/3$. $\partial W$ consists of the two rays. Then we have:
\[
\Delta_z G_W(z, z_0) = \delta_{z_0}(z) \quad \text{for } z\in W-\partial W, \quad G_W(z, z_0) = 0 \quad \text{for } z \in \partial W.
\]

The allowed steps on the lattice are:
\[
v_1 = 1, \quad v_2 = -\tfrac{1}{2} + \tfrac{\sqrt{3}}{2}i, \quad v_3 = -\tfrac{1}{2} - \tfrac{\sqrt{3}}{2}i.
\]

The discrete Laplacian here is defined as:
\[
(\Delta f)(z) = \frac{1}{3} \sum_{j=1}^3 \left[f(z + v_j) - f(z)\right].
\]
If the argument $z_0$ is omitted, it is understood to be $z_0=0$.

\subsection{Green's Function on the Full Lattice}
\begin{lem} \label{lem:infinite_lattice}
The Green's function on the infinite equilateral (triangular) lattice, written in coordinates 
$z = x e_1 + y e_2$ with 
\[
e_1 = (1,0), \quad e_2 = \left( \frac12, \frac{\sqrt{3}}{2} \right),
\]
is given by
\[
G_\infty(z)=G_\infty(x, y) = \frac{1}{4\pi^2} \int_0^{2\pi} \int_0^{2\pi} 
\frac{e^{i(x\theta + y\phi)}}{\frac13\left(e^{i\theta} + e^{-i\theta + i\phi} + e^{-i\phi}\right) - 1} 
\, d\theta \, d\phi.
\]
\end{lem}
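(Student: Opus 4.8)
The plan is to diagonalize the discrete Laplacian by Fourier analysis on the lattice group and read off $G_\infty$ as the inverse transform of the reciprocal of its Fourier symbol; the only genuinely delicate point is how to interpret the resulting integral, since the walk is recurrent. First I would coordinatize: identify the triangular lattice $\Lambda = \mathbb{Z}e_1 + \mathbb{Z}e_2$ with $\mathbb{Z}^2$ via $z = xe_1 + ye_2 \leftrightarrow (x,y)$. Under this identification the three steps become $v_1 = e_1 \leftrightarrow (1,0)$, $v_2 = e_2 - e_1 \leftrightarrow (-1,1)$, $v_3 = -e_2 \leftrightarrow (0,-1)$, so that
\[
(\Delta f)(x,y) = \tfrac13\bigl[f(x{+}1,y) + f(x{-}1,y{+}1) + f(x,y{-}1)\bigr] - f(x,y),
\]
which is precisely the generator of the walk from Section~\ref{sec:crossing_probabilities}. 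The Pontryagin dual of $\mathbb{Z}^2$ is the torus $\mathbb{T}^2 = [0,2\pi)^2$, with characters $\chi_{\theta,\phi}(x,y) = e^{i(x\theta + y\phi)}$ and inversion formula $f(x,y) = \tfrac{1}{4\pi^2}\iint \hat f(\theta,\phi)\,e^{i(x\theta+y\phi)}\,d\theta\,d\phi$.

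Next I would compute the symbol and invert. A one-line computation (apply $\Delta$ termwise to $\chi_{\theta,\phi}$) shows that $\Delta$ acts on $\chi_{\theta,\phi}$ by multiplication by
\[
\lambda(\theta,\phi) = \tfrac13\bigl(e^{i\theta} + e^{-i\theta+i\phi} + e^{-i\phi}\bigr) - 1 .
\]
Since the point mass $\delta_0$ has Fourier transform identically $1$ on $\mathbb{T}^2$, the defining relation $\Delta G_\infty = \delta_0$ becomes $\lambda(\theta,\phi)\,\widehat{G_\infty}(\theta,\phi) = 1$, hence $\widehat{G_\infty} = 1/\lambda$, and the inversion formula produces exactly the double integral in the statement.

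\textbf{The main obstacle} is justifying this integral, since $\lambda$ vanishes on $\mathbb{T}^2$. I would first show it vanishes only at the origin: each of the three summands in $\tfrac13(\cdots)$ has modulus $1$, so their average has modulus $1$ only when all three equal $1$, which forces $\theta\equiv\phi\equiv 0$. Taylor-expanding there yields
\[
\lambda(\theta,\phi) = -\tfrac13\bigl(\theta^2 - \theta\phi + \phi^2\bigr) + O\!\left(\lvert(\theta,\phi)\rvert^{3}\right),
\]
a negative-definite quadratic form equal to $-\tfrac12$ times the quadratic form of the covariance matrix $\left(\begin{smallmatrix}2/3 & -1/3\\ -1/3 & 2/3\end{smallmatrix}\right)$ from Theorem~\ref{thm:iid_walk}. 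Thus $1/\lambda$ has a nonintegrable $\lvert(\theta,\phi)\rvert^{-2}$ singularity --- a reflection of recurrence of the mean-zero walk --- so the integral is not absolutely convergent and determines $G_\infty$ only up to an additive constant. The clean statement to prove is that the differences
\[
G_\infty(z) - G_\infty(z') = \frac{1}{4\pi^2}\int_0^{2\pi}\!\!\int_0^{2\pi}\frac{e^{i(x\theta+y\phi)} - e^{i(x'\theta+y'\phi)}}{\tfrac13\bigl(e^{i\theta} + e^{-i\theta+i\phi} + e^{-i\phi}\bigr) - 1}\,d\theta\,d\phi
\]
converge absolutely (numerator vanishing to first order, denominator to second order at the origin), that $z \mapsto G_\infty(z) - G_\infty(0)$ is the potential kernel of the walk, and that it satisfies $\Delta G_\infty = \delta_0$ --- with uniqueness up to an additive constant following from a discrete Liouville argument once a growth bound is imposed. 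Since $\Delta$ annihilates constants and every subsequent use of $G_\infty$ --- the wedge Green's function $G_W$ built by the method of images, and through it the harmonic functions $h_a$ and $g$ --- involves only such differences, the additive ambiguity is immaterial. The Fourier computation giving the stated integrand is routine; the real work is this regularization and the verification that the regularized object still solves the equation.
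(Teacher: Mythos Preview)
Your proposal is correct and follows the same approach as the paper: diagonalize $\Delta$ by the characters $e^{i(x\theta+y\phi)}$, compute the symbol $\lambda(\theta,\phi)=\tfrac13(e^{i\theta}+e^{-i\theta+i\phi}+e^{-i\phi})-1$, and invert. The paper's proof stops there, whereas you go on to locate the unique zero of $\lambda$, exhibit the nonintegrable $|(\theta,\phi)|^{-2}$ singularity, and explain that only differences $G_\infty(z)-G_\infty(z')$ are absolutely convergent; this is genuine added rigor that the paper leaves implicit (it silently passes to $\mathcal G_\infty(x,y)=G_\infty(x,y)-G_\infty(0,0)$ in the next proposition and uses only differences thereafter).
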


\begin{proof}
In the basis $e_1, e_2$, the eigenfunctions of the discrete Laplacian are
\[
f_{\theta ,\phi}(x, y) = e^{i(\theta x + \phi y)},
\]
and satisfy
\[
\Delta f_{\theta,\phi} = 
\left[ \frac13 \left( e^{i\theta} + e^{-i\theta + i\phi} + e^{-i\phi} \right) - 1 \right] f_{\theta,\phi}.
\]
Then, Fourier inversion gives
\[
G_\infty(x, y) = \frac{1}{4\pi^2} \int_0^{2\pi} \int_0^{2\pi} 
\frac{e^{i(x\theta + y\phi)}}{\frac13\left(e^{i\theta} + e^{-i\theta + i\phi} + e^{-i\phi}\right) - 1} 
\, d\theta \, d\phi,
\]
as claimed.
\end{proof}

\begin{prop}\label{prop:Ginf}
For $(x,y)\in\mathbb{Z}^2, x,y\geq 0$, define the renormalized Green’s function
\[
\mathcal G_\infty(x,y) \;:=\; G_\infty(x,y)-G_\infty(0,0).
\]
Then $\mathcal G_\infty(x,y)$ admits the representation
\[
\mathcal G_\infty(x,y)
= \frac{3}{\pi}\int_{1/4}^1 
\frac{t^y}{\sqrt{\,4t-1\,}}
\Biggl(\sum_{j=1}^x \binom{x}{j}(1-t)^{j-1}\Re\!\big(u(t)^j\big)
- \sum_{\ell=0}^{y-1} t^\ell\Biggr)\,dt,
\]
where 
\[
u(t)=\frac{-1+i\sqrt{\,4t-1\,}}{2t}.
\]
Expanding $\Re(u(t)^j)$ yields a polynomial in $t$ and $\sqrt{\,4t-1\,}$. 
After cancellation with the denominator $\sqrt{\,4t-1\,}$, the integrand reduces to a linear combination of terms $\frac{t^\ell}{\sqrt{4t-1}}$. 
Thus every $\mathcal G_\infty(x,y)$ can be written as a finite $\mathbb{Q}$–linear combination of the integrals
\begin{equation}\label{eq:int_defn}
I_m := \int_{1/4}^1 \frac{t^m}{\sqrt{\,4t-1\,}}\,dt,
\qquad m\in\mathbb{Z}
\end{equation}
Each $I_m$ admits the closed form in $\mathbb{Q}(\sqrt{3},\pi)$ (see Lemma \ref{lem:I_m_values}). When $x$ or $y$ is negative, $\mathcal G_\infty$ can be found by symmetry. Therefore,
\[
\; \mathcal G_\infty(x,y) \in \mathbb{Q}(\sqrt{3},\pi)\quad 
\text{for all $(x,y)\in\mathbb{Z}^2$.}\;
\]
See Figure \ref{fig:explicit_g_inf_wedge} for explicit values.
\end{prop}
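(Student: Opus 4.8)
The plan is to evaluate $\mathcal G_\infty$ by contour integration — converting the double Fourier integral of Lemma~\ref{lem:infinite_lattice} into a single integral along the branch cut of an algebraic function — and then to finish with elementary calculus together with the lattice symmetry. First I would work not with the divergent formula itself but with its regularization: since the symbol $\lambda(\theta,\phi)=\tfrac13\bigl(e^{i\theta}+e^{-i\theta+i\phi}+e^{-i\phi}\bigr)-1$ has an isolated zero of order $2$ at the origin and is bounded away from $0$ elsewhere while $e^{i(x\theta+y\phi)}-1$ vanishes there, the integral
\[
\mathcal G_\infty(x,y)=\frac{1}{4\pi^2}\int_0^{2\pi}\!\!\int_0^{2\pi}\frac{e^{i(x\theta+y\phi)}-1}{\lambda(\theta,\phi)}\,d\theta\,d\phi
\]
converges absolutely, and since $\Delta e^{i(x\theta+y\phi)}=\lambda(\theta,\phi)e^{i(x\theta+y\phi)}$ and $\Delta 1=0$ it satisfies $\Delta\mathcal G_\infty=\delta_0$ with $\mathcal G_\infty(0,0)=0$, so it equals $G_\infty(x,y)-G_\infty(0,0)$. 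I would then carry out the $\theta$–integral by residues: with $w=e^{i\theta}$ the inner integrand becomes $\dfrac{3w\bigl(w^x e^{iy\phi}-1\bigr)}{Q_\phi(w)}\dfrac{dw}{iw}$ on $|w|=1$, where $Q_\phi(w)=w^2+(e^{-i\phi}-3)w+e^{i\phi}$ is monic with roots of product $e^{i\phi}$. Since $Q_\phi(\zeta)=0$ with $|\zeta|=1$ forces $(e^{i\phi}-1)^3=0$, i.e.\ $\phi=0$, for $\phi\in(0,2\pi)$ exactly one root $w_-(\phi)$ lies strictly inside the circle (and a check as $\phi\to0^+$ identifies it); as $x\ge 0$ there is no pole at $w=0$, so the residue theorem gives $\mathcal G_\infty(x,y)=\frac{3}{2\pi}\int_0^{2\pi}\dfrac{w_-(\phi)^x e^{iy\phi}-1}{w_-(\phi)-w_+(\phi)}\,d\phi$.

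The core step is to evaluate this last integral. Substituting $v=e^{i\phi}$ rewrites it as a contour integral over $|v|=1$ of $F(v):=\dfrac{w_-(v)^x v^y-1}{(w_-(v)-w_+(v))\,v}$. The discriminant of $Q_\phi$ factors cleanly, $(e^{-i\phi}-3)^2-4e^{i\phi}=-\dfrac{(v-1)^2(4v-1)}{v^2}$, so $w_\pm(v)=\dfrac{3v-1\pm(1-v)\sqrt{1-4v}}{2v}$, and the branch $w_-(v)$ with $w_-(0)=0$ is holomorphic on $\mathbb C\setminus[\tfrac14,\infty)$; inside the disk the only obstruction is the slit $[\tfrac14,1]$ (the double root at $v=1$ is harmless, $v=0$ is a removable point of $F$, and $F$ has no interior pole). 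Deforming $|v|=1$ onto this slit — checking that the small circles about $v=\tfrac14$ (an integrable inverse–square–root singularity) and $v=1$ (where $F$ stays bounded, the $0/0$ resolving) contribute nothing — collapses the integral to $\int_{1/4}^1$ of the jump of $F$. Across the slit $\sqrt{1-4v}$ flips between $\pm i\sqrt{4t-1}$, so $w_-$ passes to its complex conjugate $1+(1-t)\overline{u(t)}$ with $u(t)=\dfrac{-1+i\sqrt{4t-1}}{2t}$, while $w_--w_+=\dfrac{(1-v)\sqrt{1-4v}}{v}$ changes sign; combining these, the jump is a fixed constant times $\dfrac{1-t^{y}\,\Re\!\bigl((1+(1-t)u(t))^{x}\bigr)}{(1-t)\sqrt{4t-1}}$. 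Expanding $\Re\!\bigl((1+(1-t)u)^{x}\bigr)=1+\sum_{j=1}^{x}\binom xj(1-t)^j\Re(u^j)$ by the binomial theorem, cancelling the constant term against the ``$1$'', and using $\tfrac{1-t^{y}}{1-t}=\sum_{\ell=0}^{y-1}t^{\ell}$, one rearranges this into the displayed integrand.

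For the arithmetic conclusion I would expand $u(t)^j=\bigl(\tfrac{-1+i\sqrt{4t-1}}{2t}\bigr)^j$ and retain the even powers of $\sqrt{4t-1}$: then $\Re(u(t)^j)$ equals $(2t)^{-j}$ times an integer polynomial in $t$. Multiplying by $t^{y}$, $(1-t)^{j-1}$ and $1/\sqrt{4t-1}$ and expanding, the integrand becomes $1/\sqrt{4t-1}$ times a Laurent polynomial in $t$ with rational coefficients, so $\mathcal G_\infty(x,y)$ is a finite $\mathbb Q$–linear combination of the integrals $I_m$ of \eqref{eq:int_defn} with $m\in\mathbb Z$. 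By Lemma~\ref{lem:I_m_values} — whose proof is the substitution $s=\sqrt{4t-1}$, turning $I_m$ into an elementary integral of powers of $s^2+1$ over $[0,\sqrt3]$ with arctangent contributions evaluated through $\arctan\sqrt3=\pi/3$ — each $I_m\in\mathbb Q(\sqrt3,\pi)$, and since $\tfrac3\pi\in\mathbb Q(\sqrt3,\pi)$ this yields $\mathcal G_\infty(x,y)\in\mathbb Q(\sqrt3,\pi)$ for $x,y\ge0$. For general $(x,y)\in\mathbb Z^2$ I would invoke the symmetry of the walk: the step set $\{v_1,v_2,v_3\}$ is invariant under the dihedral group $D_3$ (rotations by $0,120^{\circ},240^{\circ}$ and the three reflections through the step directions), whose six images of the $60^{\circ}$ wedge $\{x,y\ge0\}$ tile the plane, so $\mathcal G_\infty(x,y)=\mathcal G_\infty(x',y')$ for some $(x',y')$ with $x',y'\ge0$, already handled.

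\noindent\textbf{Expected main obstacle.} Everything downstream of the one–dimensional integral — the binomial expansion, the reduction to the $I_m$, the evaluation of the $I_m$, and the symmetry reduction — is routine. The delicate part is the contour deformation: verifying that $w_-(v)$ is globally the interior root on $|v|=1$, that deforming onto the slit $[\tfrac14,1]$ crosses no pole of $F$, that the endpoint arcs vanish, and — most error-prone — pinning down the orientation and the exact constant in the jump of $F$ across the cut, since a sign slip there propagates into the final formula.
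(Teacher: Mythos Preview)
Your proposal is correct and follows essentially the same route as the paper: convert the double Fourier integral to nested unit–circle contour integrals, evaluate the inner one by the residue at the interior root of the quadratic symbol, factor the discriminant as $(1-v)^2(1-4v)$, use the regularization to cancel the $(1-v)$ singularity, and then collapse the remaining $|v|=1$ integral onto the branch cut $[\tfrac14,1]$ to obtain the single real integral with $\Re(u(t)^j)$. The only cosmetic difference is the order of operations—you subtract the ``$-1$'' in the numerator before taking any residues, whereas the paper first performs the $z$–residue and then subtracts $G_\infty(0,0)$—but the expansion, the reduction to the $I_m$, and the $D_3$ symmetry argument are the same.
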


\begin{proof}
Start with the formula from Lemma \ref{lem:infinite_lattice}. Let $z=e^{i\theta}$, $w=e^{i\phi}$ and $d\theta=\frac{dz}{iz}$, $d\phi=\frac{dw}{iw}$,
\begin{align*}
G_\infty(x,y)
&=\frac{1}{(2\pi i)^2}\!\oint\!\!\oint
\frac{z^x w^y}{\tfrac13(z+z^{-1}w+w^{-1})-1}\,\frac{dz}{z}\,\frac{dw}{w}\\
&=\frac{3}{(2\pi i)^2}\!\oint_{|w|=1}\!\!\oint_{|z|=1}
\frac{z^x w^y}{z^2w+z(1-3w)+w^2}\,dz\,dw.
\end{align*}
Let $\alpha(w),\beta(w)$ be the roots (in $z$) of $z^2w+z(1-3w)+w^2=0$:
\[
\alpha_{\pm}(w)=\frac{3w-1\pm \sqrt{(1-3w)^2-4w^3}}{2w},\qquad
z^2w+z(1-3w)+w^2=w\,(z-\alpha_+)(z-\alpha_-).
\]
Choose the branch so that $|\alpha_+(w)|<1<|\alpha_-(w)|$ (and $\alpha_\pm(1)=1$) which is possible since $|\alpha_+(w)|\cdot |\alpha_-(w)|=1$.
The $z$–integral is the residue at $z=\alpha_+(w)$:
\[
G_\infty(x,y)
=\frac{3}{(2\pi i)^2}\,(2\pi i)\!\oint_{|w|=1}
\frac{\alpha_+(w)^{\,x}\,w^{\,y}}{w(\alpha_+(w)-\alpha_-(w))}\,dw.
\]
Since $\alpha_+-\alpha_-=\dfrac{\sqrt{(1-3w)^2-4w^3}}{w}$,
\[
G_\infty(x,y)
=-\,\frac{3i}{2\pi}\,\oint_{|w|=1}
\frac{\alpha_+(w)^{\,x}\,w^{\,y}}{\sqrt{(1-3w)^2-4w^3}}\,dw.
\]
Factor
\[
(1-3w)^2-4w^3=(1-w)^2(1-4w),
\]
Then subtracting $G_\infty{(0,0)}$ we obtain
\[
\mathcal G_\infty(x, y)=G_\infty(x, y)-G_\infty(0,0)
=-\,\frac{3i}{2\pi}\,\oint_{|w|=1}
\frac{\alpha_+(w)^{\,x}\,w^{\,y}-1}{(1-w)\sqrt{\,1-4w\,}}\,dw.
\]
Further we have,
\[
\alpha_\pm(w)=\frac{3w-1\pm i(1-w)\sqrt{4w-1}}{2w}=1\pm (1-w)u(w), \qquad u(w)=\frac{-1+i\sqrt{4w-1}}{2w} 
\]
Now we can cancel the $(1-w)$ singularity. Since $x, y\geq 0$ we are left with, 
\[
\mathcal G_\infty(x, y)=-\,\frac{3i}{2\pi}\,\oint_{|w|=1}
\frac{1}{\sqrt{\,1-4w\,}}\left[w^y\sum_{j=1}^x\binom{x}{j}(1-w)^{j-1}u(w)^j-\sum_{\ell=0}^{y-1}w^\ell\right]\,dw.
\]
Where $\sum_{\ell=0}^{y-1}w^\ell$ comes from $\frac{w^y-1}{1-w}$.

Place a branch cut along $[1/4,\infty)$, i.e.\ $\mathrm{Arg}(1-4w)\in(-\pi,\pi)$, and modify the unit circle to run just above/below the cut $[1/4,1]$, with and add small circle around $w=\tfrac14$. See Figure \ref{fig:contour}. The integral around the small circle at $w=1/4$ does not contribute (the integrand is $\sim (w-1/4)^{-1/2}$). 

The contributions from the cut $t\in(1/4,1)$ is,
\[
\mathcal G_\infty(x, y)
=\frac{3}{\pi}\int_{1/4}^{1}
\frac{1}{\sqrt{4t-1}}\left[
t^{\,y}\sum_{j=1}^x\binom{x}{j}(1-t)^{j-1}\Re\!\big(u(t)^{\,j}\big)
-\sum_{\ell=0}^{y-1}t^{\,\ell}\right]dt,
\]
Hence $\mathcal G_\infty(x, y)$ is a $\mathbb{Q}$–linear combination of the $I_m$'s.
\end{proof}

\begin{figure}
\begin{center}
\begin{tikzpicture}[scale=2]
  \draw[->] (-0.5,0)--(2.5,0);
  \draw[->] (1,-1.5)--(1,1.5);

  \draw[thick] (2,0.05) arc[start angle=0+5, end angle=360, radius=1];
  
  \draw[thick] (1.33,0.05)--(2,0.05);
  \draw[thick] (1.33,-0.05)--(2,-0.05);

  \draw[thick] (1.33,0.04) arc[start angle=23,end angle=337,radius=0.1];
  \node at (1.22,0.2) {$w=1/4$};
  \node at (1.25,0) {$\bullet$};
\end{tikzpicture}
\caption{Contour for integration}
\label{fig:contour}
\end{center}
\end{figure}
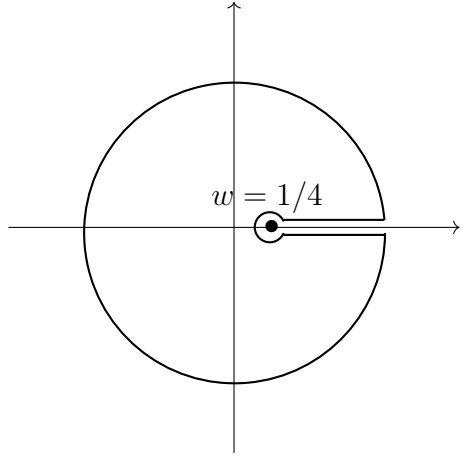
\begin{lem} \label{lem:I_m_values}
The integrals $I_m$ belong to $\mathbb Q(\sqrt3, \pi)$ and their explicit values are presented in the proof.
\end{lem}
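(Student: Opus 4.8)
The plan is to reduce each $I_m$ to an elementary integral by the substitution $u=\sqrt{4t-1}$. Under this change of variables $t=(1+u^2)/4$, $dt=\tfrac12 u\,du$, the limits $t=\tfrac14,1$ become $u=0,\sqrt3$, and the factor $u$ in $dt$ cancels the $\sqrt{4t-1}=u$ in the denominator, so
\[
I_m=\frac{1}{2\cdot 4^{m}}\int_{0}^{\sqrt3}(1+u^2)^{m}\,du .
\]
Everything then comes down to evaluating $\int_0^{\sqrt3}(1+u^2)^m\,du$ for $m\in\mathbb Z$, which I would split into the cases $m\ge0$ and $m<0$.

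For $m\ge0$ I expand $(1+u^2)^m=\sum_{j=0}^m\binom{m}{j}u^{2j}$ by the binomial theorem and integrate termwise, using $\int_0^{\sqrt3}u^{2j}\,du=\frac{3^{j}\sqrt3}{2j+1}$. This already exhibits $I_m$ as an explicit $\mathbb Q$-linear combination of powers of $\sqrt3$, hence $I_m\in\mathbb Q(\sqrt3)$; in particular $I_0=\sqrt3/2$.

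For $m=-n$ with $n\ge1$, set $J_n:=\int_0^{\sqrt3}(1+u^2)^{-n}\,du$, so that $I_{-n}=2^{2n-1}J_n$. The base case is $J_1=\arctan u\big|_0^{\sqrt3}=\arctan\sqrt3=\pi/3$. For $n\ge2$ the standard reduction formula gives
\[
J_n=\left[\frac{u}{2(n-1)(1+u^2)^{n-1}}\right]_0^{\sqrt3}+\frac{2n-3}{2(n-1)}\,J_{n-1}
=\frac{\sqrt3}{2(n-1)\,4^{\,n-1}}+\frac{2n-3}{2(n-1)}\,J_{n-1},
\]
since $(1+u^2)|_{u=\sqrt3}=4$. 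By induction each $J_n$ equals $a_n\sqrt3+b_n\pi$ with $a_n,b_n\in\mathbb Q$, and unrolling the recursion yields the promised closed forms; e.g.\ $I_{-1}=2J_1=2\pi/3$. Combining the two cases, every $I_m$ lies in $\mathbb Q(\sqrt3,\pi)$.

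I do not expect a genuine obstacle here: the substitution is forced and the rest is the classical antiderivative bookkeeping for $(1+u^2)^{\pm n}$. The only things to be careful about are presenting the reduction-formula output as a clean explicit table (or closed formula) over the range of $m$ actually needed to expand $\mathcal G_\infty(x,y)$ in Proposition~\ref{prop:Ginf}, and checking that the boundary terms are evaluated at $u=\sqrt3$ so that the resulting coefficients are indeed rational.
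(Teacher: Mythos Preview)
Your proof is correct and follows essentially the same route as the paper: the same substitution $u=\sqrt{4t-1}$, the binomial expansion for $m\ge0$, and the standard reduction formula for $\int_0^{\sqrt3}(1+u^2)^{-n}\,du$ anchored at $J_1=\pi/3$ for $m<0$. The paper phrases the negative-$m$ recursion directly in terms of $I_m$ rather than introducing $J_n$, but the two are the same computation.
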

\begin{proof}
    Make the substitution $s=\sqrt{4t-1}$, i.e. $t=(1+s^2)/4$, $dt=(s/2)\,ds$.
Then for integer $m\geq 0$,
\[
I_m=\int_{1/4}^{1}\frac{t^{\,m}}{\sqrt{4t-1}}\,dt
=\frac12\int_{0}^{\sqrt3}\Big(\frac{1+s^2}{4}\Big)^{m}\,ds
=\frac{\sqrt3}{2^{2m+1}}
\sum_{k=0}^{m}\binom{m}{k}\frac{3^{k}}{2k+1}.
\]
For $m<0$ let $q=-m$. We have,
\[
I_{m}=\int_{1/4}^{1}\frac{t^{m}}{\sqrt{4t-1}}\,dt
=2^{2q-1}\int_{0}^{\sqrt3}\frac{ds}{(1+s^2)^{q}}=2^{2q-1}\left[\int_0^{\sqrt 3}\frac{ds}{(1+s^2)^{q-1}}-\int_0^{\sqrt 3}\frac{s^2ds}{(1+s^2)^{q}}\right]
\]
Integrate by parts on the second term with, $u=s$ and $dv=s(1+s^2)^{-q}ds$,
\[
\int_0^{\sqrt{3}} \frac{s^2ds}{(1+s^2)^{q}}=-\frac{\sqrt 3(1+(\sqrt 3)^2)^{1-q}}{2(q-1)}+\frac{1}{2(q-1)}\cdot \int_0^{\sqrt 3}\frac{ds}{(1+s^2)^{q-1}}
\]
We get the recurrence,
\[
I_m=\frac{\sqrt3}{q-1}+\frac{2(2q-3)}{q-1}\,I_{m+1},\qquad q\ge2,
\]
with initial value
\[
I_{-1}=2\int_{0}^{\sqrt 3}\frac{du}{1+u^2}
=2\arctan(\sqrt3)=\tfrac{2\pi}{3}.
\]
Hence every $I_m$ is a $\mathbb{Q}$–linear combination of $\sqrt3$ and $\pi$.
\end{proof}

\begin{figure}
\begin{center}
\begin{tikzpicture}[scale=1.2]

\pgfmathsetmacro{\hexsize}{1}

\newcommand*\axialToXY[2]{
  \pgfmathsetmacro{\X}{1.732*(#1 + 0.5*#2)*\hexsize}
  \pgfmathsetmacro{\Y}{1.5*(#2)*\hexsize}
}

\newcommand*\Hex[2]{
  \axialToXY{#1}{#2}
  \begin{scope}[shift={(\X,\Y)}]
    \path
      \foreach \k in {0,...,5} {
        ({\hexsize*cos(30+60*\k)},{\hexsize*sin(30+60*\k)}) coordinate (P\k)
      };
    \draw[gray!55, very thin]
      (P0) -- (P1) -- (P2) -- (P3) -- (P4) -- (P5) -- cycle;
  \end{scope}
}

\newcommand*\Gval[3]{
  \axialToXY{#1}{#2}
  \node at (\X,\Y) {#3};
}

\def\Rwedge{5}
\foreach \q in {0,...,\Rwedge}{
  \pgfmathtruncatemacro{\rmax}{\Rwedge-\q}
  \foreach \r in {0,...,\rmax}{
    \Hex{\q}{\r}
  }
}

\Gval{0}{0}{$0$}
\Gval{0}{1}{$-\frac{3\sqrt 3}{2\pi}$}
\Gval{1}{0}{-1}
\Gval{1}{1}{$-\frac{9\sqrt 3}{4\pi}$}
\Gval{2}{0}{$\frac{3\sqrt 3}{\pi}-3$}

\Gval{0}{2}{$-\frac{9\sqrt 3}{4\pi}$}
\Gval{3}{0}{$\frac{27\sqrt 3}{2\pi}-9$}
\Gval{2}{1}{$1-\frac{9\sqrt 3}{2\pi}$}
\Gval{1}{2}{$-\frac{21\sqrt 3}{8\pi}$}
\Gval{0}{3}{$-\frac{27\sqrt 3}{10\pi}$}
\Gval{4}{0}{$\frac{99\sqrt 3}{2\pi}-29$}
\Gval{3}{1}{$6-\frac{111\sqrt 3}{8\pi}$}
\Gval{2}{2}{$-\frac{117\sqrt 3}{40\pi}$}
\Gval{1}{3}{$-\frac{117\sqrt 3}{40\pi}$}
\Gval{0}{4}{$-\frac{843\sqrt 3}{280\pi}$}

\Gval{5}{0}{$\frac{705\sqrt 3}{4\pi}-99$}
\Gval{4}{1}{$27-\frac{261\sqrt 3}{5\pi}$}
\Gval{3}{2}{$-\frac{27\sqrt 3}{20\pi}-1$}
\Gval{2}{3}{$-\frac{879\sqrt 3}{280\pi}$}
\Gval{1}{4}{$-\frac{1773\sqrt 3}{560\pi}$}
\Gval{0}{5}{$-\frac{909\sqrt 3}{280\pi}$}

\draw[gray!55, dashed] (0,0)--++(10,0);
\draw[gray!55, dashed] (0,0)--++(60:10);

\end{tikzpicture}
\caption{Explicit $\mathcal G_\infty(x,y)$ values in a single $60^\circ$ wedge ($x\ge0,\ y\ge0,\ x+y\le 5$). Origin is marked by $0$, and dashed lines represent $e_1$ and $e_2$ directions. Other values follow by $D_3$ symmetry.}
\label{fig:explicit_g_inf_wedge}
\end{center}
\end{figure}

\subsection{Green's Function in Wedge}

\begin{prop}\label{prop:wedge_green_function}
Let $W \subset \mathbb{C}$ be the $60^\circ$ sector between the two rays of angles $0$ and $\pi/3$.  
Consider the dihedral group $D_3$ of order $6$ acting on the six wedges congruent to $W$.  
Then the Green's function in $W$ with Dirichlet boundary conditions is given by
\[
G_W(z, z_0) = \sum_{\gamma \in D_3} \operatorname{sgn}(\gamma) \, G_\infty(z-\gamma( z_0)),
\]
where $\operatorname{sgn}(\gamma)=1$ if $\gamma$ is a rotation by $0,2\pi/3,4\pi/3$, and $\operatorname{sgn}(\gamma)=-1$ for reflections.
\end{prop}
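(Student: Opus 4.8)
The plan is to verify directly that the proposed sum $G_W(z,z_0) := \sum_{\gamma \in D_3} \operatorname{sgn}(\gamma)\, G_\infty(z - \gamma(z_0))$ satisfies the two defining properties of the Green's function on the wedge: $\Delta_z G_W(z,z_0) = \delta_{z_0}(z)$ for $z$ in the interior of $W$, and $G_W(z,z_0) = 0$ for $z \in \partial W$. This is the standard method of images on a lattice that is invariant under the relevant reflection group, and the two checks are largely independent.

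First I would establish the interior Laplacian condition. The key point is that the triangular lattice and its discrete Laplacian $\Delta$ are invariant under the $D_3$ action (the rotations by $2\pi/3$ and $4\pi/3$ permute the step vectors $v_1,v_2,v_3$ cyclically, and the reflections permute them as transpositions), so for each $\gamma$ the function $z \mapsto G_\infty(z - \gamma(z_0))$ is discrete harmonic away from $z = \gamma(z_0)$ and has $\Delta_z = \delta_{\gamma(z_0)}(z)$ at that point. Summing, $\Delta_z G_W(z,z_0) = \sum_{\gamma} \operatorname{sgn}(\gamma)\, \delta_{\gamma(z_0)}(z)$. For $z_0$ in the open wedge $W$, the six image points $\gamma(z_0)$ are distinct and, except for $\gamma = \mathrm{id}$, all lie outside the closed wedge; hence for $z$ in the interior of $W$ only the $\gamma = \mathrm{id}$ term survives and equals $\delta_{z_0}(z)$, as required. (One should note the degenerate case where $z_0$ lies on a ray of $\partial W$, in which the images collapse in pairs with opposite signs and cancel, consistent with the boundary condition.)

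Next I would check the Dirichlet boundary condition. Let $\sigma \in D_3$ be a reflection fixing one of the two bounding rays pointwise. For $z$ on that ray, $\sigma(z) = z$, and composing with $\sigma$ permutes the six group elements while flipping $\operatorname{sgn}$; pairing each $\gamma$ with $\sigma\gamma$ gives
\[
G_W(\sigma z, z_0) = \sum_{\gamma} \operatorname{sgn}(\gamma)\, G_\infty(\sigma z - \gamma z_0) = \sum_{\gamma} \operatorname{sgn}(\gamma)\, G_\infty(z - \sigma\gamma z_0) = -\sum_{\gamma'} \operatorname{sgn}(\gamma')\, G_\infty(z - \gamma' z_0),
\]
using that $G_\infty$ depends only on the lattice distance and is therefore invariant under the isometry $\sigma$. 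Since $\sigma z = z$, this says $G_W(z,z_0) = -G_W(z,z_0)$, so it vanishes on that ray; the same argument with the reflection fixing the other ray handles the rest of $\partial W$.

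The main obstacle is not either algebraic check but a convergence and well-posedness issue: $G_\infty$ as defined by the Fourier integral in Lemma~\ref{lem:infinite_lattice} has a logarithmic divergence (the recurrent 2D Laplacian), so the individual terms $G_\infty(z - \gamma z_0)$ are only defined up to an additive constant, and one must argue that the signed sum over $D_3$ is unambiguous. This is where the $\pm1$ signs are essential: because $\sum_\gamma \operatorname{sgn}(\gamma) = 0$, replacing each $G_\infty$ by the renormalized $\mathcal G_\infty$ of Proposition~\ref{prop:Ginf} changes $G_W$ by $\big(\sum_\gamma \operatorname{sgn}(\gamma)\big) G_\infty(0,0) = 0$, so $G_W$ is well-defined and given by the same formula with $G_\infty$ replaced by $\mathcal G_\infty$. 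I would also remark that uniqueness of the wedge Green's function (given suitable growth control at infinity, inherited from the known asymptotics of $\mathcal G_\infty$) then identifies this $G_W$ with the one characterized by the stated boundary value problem.
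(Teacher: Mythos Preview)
Your proof is correct and follows the same method-of-images approach as the paper: verify that the alternating $D_3$ sum is harmonic in the interior with the correct delta source, and that it vanishes on $\partial W$ by the reflection-pairing symmetry. Your version is more explicit than the paper's (which simply asserts the boundary vanishing ``due to symmetry'' and the interior equation ``by definition of $G_\infty$''), and your discussion of the regularization issue --- that $\sum_\gamma \operatorname{sgn}(\gamma)=0$ makes the sum well-defined despite the logarithmic divergence of $G_\infty$ --- corresponds to the paper's remark immediately after the proof that one may equivalently use $\mathcal G_\infty$.
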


\begin{proof}
Consider the group $D_3$ acting on the six wedges of the same shape as $W$.  
By construction, the alternating sum
\[
\sum_{\gamma \in D_3} \operatorname{sgn}(\gamma) \, G_\infty(z-\gamma( z_0))
\]
vanishes along the boundaries of $W$ due to symmetry.  
In the interior $W \setminus \partial W$, we have 
$\Delta_z G_W(z, z_0) = \delta_{z_0}(z)$ by the definition of $G_\infty$.  
Thus $G_W$ satisfies the defining properties of the Dirichlet Green’s function on $W$.
\end{proof}
Note we equivalently have that,
\[
G_W(z, z_0) = \sum_{\gamma \in D_3} \operatorname{sgn}(\gamma) \, \mathcal G_\infty(z-\gamma( z_0)),
\]

\subsection{Point Mass boundary condition}

\begin{prop}\label{prop:point_mass_solution}
Let $a \in \partial W$ be a boundary point of the $60^\circ$ wedge $W$.  
Let $h_a(z)$ be the discrete harmonic function on $W \setminus \partial W$ with boundary values 
$h_a(z) = \delta_a(z)$ on $\partial W$.  
Then for $z \in W \setminus \partial W$ we have
\[
h_a(z) = \frac{1}{3}G_W\big(z,\, a-v_j\big),
\]
where $v_j$ is the unique step vector such that $a-v_j \in W \setminus \partial W$  
($v_3$ if $a$ lies on the $e_1$-axis, $v_2$ if $a$ lies on the $e_2$-axis).
\end{prop}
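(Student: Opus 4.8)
The plan is to show that $3h_a$, extended by $0$ along $\partial W$, solves exactly the discrete boundary-value problem that characterizes $G_W(\cdot,a-v_j)$, and then to conclude by uniqueness among bounded functions. Conceptually this is the discrete last-exit decomposition of harmonic measure: reading $h_a(z)$ as the probability that the mean-zero walk with i.i.d.\ steps $v_1,v_2,v_3$ started at $z$ first reaches $\partial W$ at $a$, the walk can exit through $a$ only by occupying the unique interior neighbour $a-v_j$ and then taking the step $v_j$, which has probability $\tfrac13$; summing over all visits to $a-v_j$ before absorption gives $h_a(z)=\tfrac13\,\mathbb E_z\!\left[\#\{\text{visits to }a-v_j\}\right]$, and this expected occupation time is the (probabilistic) Green's function. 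I would turn this into a clean argument in three steps.

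First I would record the setup: in axial coordinates $W\setminus\partial W=\{(x,y):x,y\ge 1\}$, and for such $z$ all three neighbours $z+v_k$ lie in $W$ (possibly on $\partial W$), so $\Delta h_a(z)$ is well defined using the boundary values. The mean-zero walk of Theorem~\ref{thm:iid_walk} exits the thin $60^\circ$ wedge almost surely, so by optional stopping a \emph{bounded} discrete-harmonic function on $W\setminus\partial W$ vanishing on $\partial W$ must be identically zero; in particular $h_a$ is the unique bounded solution of its Dirichlet problem and satisfies $0\le h_a\le1$. I would also check, by seeing which of $a-v_1,a-v_2,a-v_3$ lands in the open wedge, that $v_j=v_3$ when $a$ is on the $e_1$-ray and $v_j=v_2$ when $a$ is on the $e_2$-ray, the two degenerate cases $a-v_j\in\partial W$ being trivial since then both sides vanish.

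The core step is the computation. Put $F(z):=3h_a(z)$ on $W\setminus\partial W$ and $F(z):=0$ on $\partial W$. For $z\in W\setminus\partial W$,
\[
\Delta_z F(z)=\tfrac13\sum_{k=1}^{3}F(z+v_k)-3h_a(z)=\sum_{k:\,z+v_k\notin\partial W}h_a(z+v_k)-3h_a(z),
\]
while harmonicity of $h_a$ with boundary data $\delta_a$ gives $\sum_{k=1}^{3}h_a(z+v_k)=3h_a(z)$, hence
\[
\sum_{k:\,z+v_k\notin\partial W}h_a(z+v_k)=3h_a(z)-\sum_{k:\,z+v_k\in\partial W}\delta_a(z+v_k)=3h_a(z)-\mathbf 1[z=a-v_j],
\]
because an interior $z$ with $z+v_k=a$ forces $k=j$. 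Thus $\Delta_z F=-\delta_{a-v_j}$ on $W\setminus\partial W$ and $F\equiv0$ on $\partial W$, so $F$ solves the same problem as $G_W(\cdot,a-v_j)$. Both are bounded ($0\le h_a\le1$, and $G_W(\cdot,a-v_j)$ equals the finite expected occupation time of $a-v_j$, which is at most $3$ since from $a-v_j$ the walk is absorbed immediately with probability $\ge\tfrac13$), so their difference is a bounded discrete-harmonic function on $W\setminus\partial W$ vanishing on $\partial W$, hence zero; therefore $3h_a(z)=G_W(z,a-v_j)$ for $z\in W\setminus\partial W$.

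The only point needing care is the sign/normalization bookkeeping: the identity as stated requires $G_W$ to be normalized so that $\Delta_z G_W(z,z_0)=-\delta_{z_0}(z)$ in the open wedge, which is the convention consistent with the explicit values in Figure~\ref{fig:explicit_g_inf_wedge} (there $\mathcal G_\infty(v_1)=\mathcal G_\infty(v_2)=\mathcal G_\infty(v_3)=-1$, forcing $\Delta\mathcal G_\infty=-\delta_0$ and hence $\Delta_zG_W=-\delta_{z_0}$ via Proposition~\ref{prop:wedge_green_function}); and one must keep "bounded" in force throughout so the uniqueness step is legitimate, or equivalently invoke the last-exit decomposition above directly. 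Once these conventions are pinned down, the computation is entirely mechanical.
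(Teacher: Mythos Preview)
Your proof is correct and essentially the same as the paper's: both verify that one side solves the defining boundary-value problem of the other, you going $h_a\to G_W$ while the paper goes $G_W\to h_a$. You are more careful than the paper in two respects---you explicitly invoke a boundedness/uniqueness argument, and you correctly flag the sign convention (the explicit values in Figure~\ref{fig:explicit_g_inf_wedge} force $\Delta_z G_W=-\delta_{z_0}$, so the paper's own computation $1/3-1/3=0$ only works because two sign slips cancel).
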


\begin{proof}
We need to show the function $h_a(z)$ satisfies $\Delta_z h_a(z) = 0$ for all $z \in W \setminus \partial W$ with the given boundary conditions. First, consider $\Delta_zh_a(z)$ for points $z\ne a-v_j$. Since the neighbors of $z$ are $z+v_1, z+v_2, z+v_3$ none of which are $a$ by the definition of $G_W$ we conclude that $\Delta_zh_a(z)=0$. When $z=a-v_j$ we get $\Delta_zh_a(z)=\frac{1}{3}\Delta_zG_w(a-v_j, a-v_j)-\frac{1}{3}\delta_a(a)=1/3-1/3=0$. Hence, the function is harmonic and satisfies the boundary conditions.
\end{proof}

\begin{ex} \label{ex:h(0,2)11}
    We compute $h_{(0,2)}(1,1)=\frac{1}{3}G_W((1,1),(1,1))$. The images of $(1,1)$ under the $D_3$ symmetries are $(1,1),(-1,2),(-2,1),(-1,-1),(1,-2),(2,-1)$. 
    Hence,
    \begin{align*}
    h_{(0,2)}(1,1)=\frac{1}{3}\left[\mathcal G_\infty(0,0)-\mathcal G_\infty(2,-1)+\mathcal G_\infty(3,0)-\mathcal G_\infty(2,2)+\mathcal G_\infty(0,3)-\mathcal G_\infty(-1,2)\right]\\
    =0+\frac{9\sqrt{3}}{4\pi}+\frac{27\sqrt 3}{2\pi}-9+\frac{117\sqrt{3}}{40\pi}-\frac{27\sqrt 3}{10\pi}+\frac{9\sqrt{3}}{4\pi}=\frac{243\sqrt{3}}{40\pi}-3\approx 0.3493
    \end{align*}
\end{ex}

\subsection{Full boundary condition}

\begin{lem} \label{lem:full_boundary_g}
Let $g(z)$ be the discrete harmonic function on $W \setminus \partial W$ with boundary conditions
\[
g(z) = 1 \ \text{on the $e_1$-ray}, 
\quad g(z) = 0 \ \text{on the $e_2$-ray}.
\]
Write $z=xe_1+ye_2$. Then
\[
g(z)=-\frac{1}{12\pi^2}\oint_{|u|=1}\oint_{|w|=1}\frac{u^{{x-1}}w^{y-1}}{\frac{1}{3}(u+u^{-1}w+w^{-1})-1}\cdot S(u, w)\,dw\,du
\]
Where 
\[
S(u, w)=
\frac{(u - w^{2})(u^{2} - w)(u w - 1)}{u w (u - 1)(u - w)(w - 1)}.
\]
\end{lem}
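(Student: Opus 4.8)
The plan is to realize $g$ as a sum of the point-mass harmonic functions $h_a$ of Proposition~\ref{prop:point_mass_solution} over the $e_1$-ray, and then feed the explicit Green's-function formulas of Section~\ref{sec:green_function} into that sum. The references will be: Proposition~\ref{prop:point_mass_solution} for $h_a$ in terms of $G_W$, Proposition~\ref{prop:wedge_green_function} for $G_W$ in terms of $G_\infty$, and Lemma~\ref{lem:infinite_lattice} for the Fourier form of $G_\infty$.

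\emph{Step 1 (reduce to a sum of $h_a$'s).} First I would show $g=\sum_{m\ge 1}h_{me_1}$. Setting $\tilde g:=\sum_{m\ge1}h_{me_1}$, each $h_{me_1}\ge0$ and every partial sum $\sum_{m=1}^{M}h_{me_1}$ is a bounded discrete-harmonic function whose boundary values lie in $[0,1]$, so by the maximum principle $\tilde g$ converges with $\tilde g\le1$, is discrete harmonic in $W\setminus\partial W$, equals $1$ on the $e_1$-ray and $0$ on the $e_2$-ray. Since the mean-zero walk with steps $v_1,v_2,v_3$ exits $W$ almost surely, the bounded discrete Dirichlet problem on $W$ has a unique bounded solution (the difference of two such is a bounded discrete-harmonic function vanishing on $\partial W$, hence $\equiv 0$), so $\tilde g=g$. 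By Proposition~\ref{prop:point_mass_solution}, for $m\ge1$ the point $a=me_1$ lies on the $e_1$-axis, $v_j=v_3=-e_2$, and $a-v_j=me_1+e_2\in W\setminus\partial W$, so $h_{me_1}(z)=\tfrac13 G_W(z,\,me_1+e_2)$. Hence $g(z)=\tfrac13\sum_{m\ge1}G_W(z,\,me_1+e_2)$, a convergent series of nonnegative terms, and by Abel's theorem $g(z)=\tfrac13\lim_{r\to1^-}\sum_{m\ge1}r^{m}G_W(z,\,me_1+e_2)$.

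\emph{Step 2 (Fourier representation and summation).} Next I would insert $G_W(z,z_0)=\sum_{\gamma\in D_3}\operatorname{sgn}(\gamma)G_\infty(z-\gamma z_0)$ together with the Fourier formula of Lemma~\ref{lem:infinite_lattice}, obtaining $G_W(z,z_0)=\tfrac1{4\pi^2}\int_0^{2\pi}\int_0^{2\pi}\lambda(\theta,\phi)^{-1}\big[\sum_\gamma\operatorname{sgn}(\gamma)\chi(z-\gamma z_0)\big]\,d\theta\,d\phi$, where $\chi(p)=e^{i(p_1\theta+p_2\phi)}$ for $p=p_1e_1+p_2e_2$ and $\lambda=\tfrac13(e^{i\theta}+e^{-i\theta+i\phi}+e^{-i\phi})-1$. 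The only zero of $\lambda$ on the torus is $(0,0)$, and there the bracket vanishes to second order: its constant term is $\sum_\gamma\operatorname{sgn}(\gamma)=0$ and its linear term is $-(\sum_\gamma\operatorname{sgn}(\gamma)\gamma)z_0=0$, because $\sum_{\gamma\in D_3}\operatorname{sgn}(\gamma)\gamma=(1+\rho+\rho^2)(1-\sigma)=0$ on the planar representation (the eigenvalues of $\rho$ sum to $0$). So each $G_W$ is an honestly convergent integral over the $2$-torus, and for $r<1$ one may interchange $\sum_{m\ge1}r^{m}$ with the integral by dominated convergence. Identifying $D_3$ with $S_3$ permuting $v_1=e_1$, $v_2=e_2-e_1$, $v_3=-e_2$, and writing $z_1=e^{i\theta}=u$, $z_2=e^{i(\phi-\theta)}=w/u$, $z_3=e^{-i\phi}=1/w$ (so $z_1z_2z_3=1$, $\lambda=\tfrac13(z_1+z_2+z_3)-1$, $\chi(z)=z_1^{x+y}z_2^{y}=u^{x}w^{y}$), one has $me_1+e_2=(m+1)v_1+v_2$, hence $\chi(z-\gamma(me_1+e_2))=u^xw^y\,z_{\pi(1)}^{-(m+1)}z_{\pi(2)}^{-1}$ for the permutation $\pi\leftrightarrow\gamma$, with $\operatorname{sgn}(\pi)=\operatorname{sgn}(\gamma)$. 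Summing the now-convergent geometric series gives $\sum_{m\ge1}r^m G_W(z,me_1+e_2)=\tfrac1{4\pi^2}\int\int \lambda^{-1}u^xw^y\sum_{\pi\in S_3}\operatorname{sgn}(\pi)\dfrac{r\,z_{\pi(1)}^{-1}z_{\pi(2)}^{-1}}{z_{\pi(1)}-r}\,d\theta\,d\phi$.

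\emph{Step 3 (pass to the limit and simplify).} Converting $d\theta\,d\phi$ to $\tfrac{du}{iu}\tfrac{dw}{iw}$ over $|u|=|w|=1$ turns $\tfrac13\cdot\tfrac1{4\pi^2}$ into $-\tfrac1{12\pi^2}$ and absorbs $uw$ into $u^{x-1}w^{y-1}$; letting $r\to1^-$ and using $z_{\pi(1)}^{-1}z_{\pi(2)}^{-1}=z_{\pi(3)}$ yields $g(z)=-\tfrac1{12\pi^2}\oint_{|u|=1}\oint_{|w|=1}\lambda^{-1}u^{x-1}w^{y-1}\big(\sum_{\pi\in S_3}\operatorname{sgn}(\pi)\tfrac{z_{\pi(3)}}{z_{\pi(1)}-1}\big)\,dw\,du$. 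Finally I would check the algebraic identity $\sum_{\pi\in S_3}\operatorname{sgn}(\pi)\tfrac{z_{\pi(3)}}{z_{\pi(1)}-1}=\tfrac{z_3-z_2}{z_1-1}+\tfrac{z_1-z_3}{z_2-1}+\tfrac{z_2-z_1}{z_3-1}=S(u,w)$, which after clearing denominators and substituting $z_1=u,\ z_2=w/u,\ z_3=1/w$ reduces to the polynomial identity $(u-w^2)(w-u)(1-w)+u^2(uw-1)(u-1)(1-w)+w^2(w-u^2)(u-1)(w-u)=(u-w^2)(u^2-w)(uw-1)$, verified by direct expansion. The expected main obstacle is not the algebra but the analytic bookkeeping in the $r\to1^-$ limit: the limiting integrand has the poles of $S(u,w)$ on $\{u=1\}$, $\{w=1\}$, $\{u=w\}$ sitting on the contours of integration, so the statement must be read with these contours suitably indented (the indentation inherited from $r<1$, equivalently as a principal value), and one must argue that this regularized limit indeed reproduces $12\pi^2\,g(z)$. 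A secondary delicate point is the uniqueness of the bounded discrete Dirichlet solution on the wedge used in Step~1.
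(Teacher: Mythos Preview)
Your proposal is correct and follows essentially the same route as the paper: write $g=\sum_{m\ge1}h_{me_1}$, use Proposition~\ref{prop:point_mass_solution} to rewrite each summand as $\tfrac13 G_W(z,me_1+e_2)$, expand $G_W$ via the $D_3$ images and the Fourier formula for $G_\infty$, sum the geometric series in $m$, and simplify to $S(u,w)$. The main difference is that you are more careful about the analytic bookkeeping (uniqueness of the bounded Dirichlet solution, Abel regularization, and the $r\to1^-$ limit with indented contours), whereas the paper treats these steps formally; your $S_3$ reindexing is also a slightly cleaner way to arrive at the closed form for $S(u,w)$ than the paper's direct listing of the six images.
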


\begin{proof}
By linearity, $g = \sum_{m \ge 1} h_{(m,0)}$ is harmonic in $W$ and has the correct boundary values.  
From Proposition~\ref{prop:point_mass_solution} on $W\backslash \partial W$,
\[
h_{(m,0)}(z) = \frac{1}{3}G_W\big(z, me_1 + e_2\big).
\]
Summing over $m$ gives
\[
g(z) = \sum_{m=1}^\infty \frac{1}{3}G_W\big(z, me_1 + e_2\big).
\]
Using the formula for $G_W$ from Proposition~\ref{prop:wedge_green_function} and the Fourier representation
for $G_\infty$ from Lemma~\ref{lem:infinite_lattice} yields
\[
g(z) = \frac{1}{12\pi^2} \sum_{\gamma\in D_3}\mathrm{sgn}(\gamma)\sum_{m\geq 1}\int_0^{2\pi} \int_0^{2\pi} 
        \frac{e^{i(z-\gamma(me_1+e_2))\cdot (\theta, \phi)}}{\lambda(\theta, \phi)} 
        \, d\theta \, d\phi.
\]
We have, 
\[
\sum_{m\geq 1}e^{-im(\gamma e_1)\cdot (\theta, \phi)}=\frac{e^{-i(\gamma e_1)\cdot (\theta, \phi)}}{1-e^{-i(\gamma e_1)\cdot (\theta, \phi)}}
\]
The possible images of $\gamma (e_1+e_2)$ are $(1,1),(-2,1),(1,-2)$, $(-1,2),(2,-1),(-1,-1)$ and the corresponding images of $\gamma(e_1)$ are $(1,0),(-1,1),(0,-1),(-1,1),(1,0),(0,-1)$. We let $u=e^{i\theta}$ and $w=e^{i\phi}$. Then,
\begin{align*}
\sum_{\gamma\in D_3}\mathrm{sgn}(\gamma)\sum_{m\geq 1}\frac{e^{-i\gamma(e_1+e_2))\cdot (\theta, \phi)}}{1-e^{-i\gamma e_1\cdot (\theta, \phi)}}= \frac{u^{-1}w^{-1}}{1-u^{-1}}+\frac{u^2w^{-1}}{1-uw^{-1}}+\frac{u^{-1}w^{2}}{1-w}-\frac{uw^{-2}}{1-uw^{-1}}-\frac{u^{-2}w}{1-u^{-1}}-\frac{uw}{1-w}
\end{align*}
Thus,
\[
g(z)=-\frac{1}{12\pi^2}\oint_{|u|=1}\oint_{|w|=1}\frac{u^{x-1}w^{y-1}}{\frac{1}{3}(u+u^{-1}w+w^{-1})-1}\cdot S(u, w)\,dw\, du
\]
Where, \[
S(u, w)= \frac{u^{-1}w^{-1}}{1-u^{-1}}+\frac{u^2w^{-1}}{1-uw^{-1}}+\frac{u^{-1}w^{2}}{1-w}-\frac{uw^{-2}}{1-uw^{-1}}-\frac{u^{-2}w}{1-u^{-1}}-\frac{uw}{1-w}
\]
We can factor,
\[
S(u, w)=
\frac{(u - w^{2})(u^{2} - w)(u w - 1)}{u w (u - 1)(u - w)(w - 1)}.
\]
\end{proof}

\section{Faces at Distance \texorpdfstring{$d$}{d} from the Boundary} \label{sec:faces_m}

In this section, we study the distribution of face sizes in uniformly random reduced $\mathfrak{sl}_3$ webs that lie at distance at least $d$ from the boundary. Our approach is to work with face diagrams which are abstract local configurations of arcs surrounding a single interior face. By analyzing the probabilities of these local configurations, we can compare the asymptotic frequencies of faces of different sizes. In particular, we define a face diagram extension operation that transforms a face diagram of size $2k$ into one of size $2k+2$, and we compare the probabilities of these two configurations appearing at depth at least $d$.

The key idea is to reduce the problem to analyzing discrete harmonic functions in a truncated quadrant, denoted $Q_d \subset \mathbb{Z}^2$. This region consists of all lattice points in the first quadrant lying outside the triangle bounded by the coordinate axes and the line $x + y = d$. The region $Q_d$ models the subdiagram beneath a face that is at distance at least $d$ from the boundary. The condition that the distance to the boundary is at least $d$, corresponds to the condition $x + y \geq d$. By solving discrete Dirichlet problems on $Q_d$, we obtain hitting probabilities that encode the likelihood that a particular face diagram is at least distance $d$ from the boundary. Using these estimates, we prove that the ratio of the probabilities of extended versus original face diagrams is bounded above by $O(1/d^2)$, and thus that the fraction of interior faces of size greater than 6 converges to zero as $d \to \infty$. This also implies that the number of faces of size $6+2k$ and a distance at least $d$ from the boundary is of the order $O(1/d^{2k})$.

\begin{defn}
Let $F$ be an interior face of a reduced web.  
The \emph{depth} (or \emph{distance to the boundary}) of $F$ is the minimal number of edges in the dual graph of the web that must be traversed to reach a face incident to the boundary.  
Equivalently, it is the length (in dual edges) of a shortest path in the dual graph from $F$ to any boundary face.  
\end{defn}

\begin{defn}
\label{defn:hitting_probs}
    Fix an integer $d \geq 0$. Let $Q_d$ denote the region in the upper-right quadrant of $\mathbb{Z}^2$ with the triangle bounded by the vertices $(0,0)$, $(0,d-1)$, and $(d-1,0)$ removed. That is,
    \[
    Q_d := \left\{(x,y)\in\mathbb{Z}_{\geq 0}^2 \,:\, x+y \geq d \right\}.
    \]
    Define the discrete Laplacian operator on functions $f : \mathbb{Z}^2 \to \mathbb{R}$ by
    \[
    \Delta f(x, y) = \frac{1}{3} \left[f(x+1, y) + f(x-1, y+1) + f(x, y-1) - 3f(x, y)\right].
    \]
    For a boundary point $a \in \partial Q_d$, define the function $h_a(\cdot;d):\mathbb{Z}^2 \to \mathbb{R}$ to be the unique solution to the discrete Dirichlet problem
    \[
    \Delta h_a(x, y;d) = 0 \quad \text{for } (x,y) \in Q_d, \qquad h_a(x,y;d) = \delta_a(x,y) \text{ on } \partial Q_d.
    \]
    In other words, $h_a(\cdot;d)$ is the discrete harmonic function on $Q_d$ with a unit source at $a$ and vanishing boundary conditions elsewhere. See Figure \ref{fig:domain_qd}.
\end{defn}

\begin{figure}[ht]
\begin{center}
\begin{tikzpicture}[scale=1, thick, 
    every node/.style={font=\small},
    every path/.style={>=Latex}]
    
  \draw[->] (0,0) -- (5,0) node[right] {$x$};
  \draw[->] (0,0) -- (0,5) node[above] {$y$};
  
  \draw[thick] (0,0) -- (3,0);
  \draw[thick] (0,0) -- (0,3);
  \draw[thick] (0,3) -- (3,0);
  
  \filldraw[black] (0.3,4) circle (2pt) node[above right] {$(x,y)$};
  
  \filldraw[black] (4,0) circle (2pt) node[below] {$(a,0)$};
  
  \draw[dashed,->] (0.3,4) .. controls (3.8,2) and (4.2,0.3) .. (4,0);

  \node at (4.3,4) {$Q_d$};

  \fill[gray!20, opacity=0.4] (0,0) -- (0,3) -- (3,0) -- cycle;

\end{tikzpicture}
\end{center}
\caption{Domain $Q_d$}
\label{fig:domain_qd}
\end{figure}
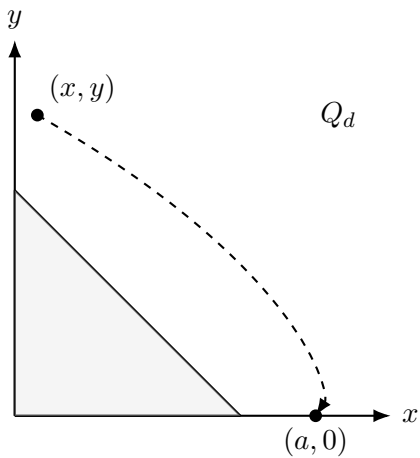

\begin{defn}
A \emph{face diagram} is an abstract local configuration in an $m$-diagram consisting of a single interior face together with the surrounding arcs that form its immediate boundary. Specifically, a face of size $2k$ is bounded by $2k-2$ arc segments alternating in color; the face diagram records only these boundary arc segments in order. 

The \emph{face diagram extension operation} is a local transformation that produces a new face diagram corresponding to a face of size $2k+2$ from one of size $2k$. Identify the first crossing point (from left to right) beneath the face at which two boundary arcs intersect (the \emph{green point} in Figure~\ref{fig:face-extension}. Uncross these two arcs and insert one new red arc and one new blue arc between the separated strands. This adds exactly two more arc segments to the boundary of the face, increasing its size by two.

At this stage, a face diagram should be viewed simply as a possible configuration of arc segments around a certain face, without specifying which reduced web it belongs to. The probability calculations in later sections determine how often such configurations occur in uniformly random reduced webs. Considering all possible face diagrams will determine the probabilities of faces of certain sizes in a reduced web.
\end{defn}

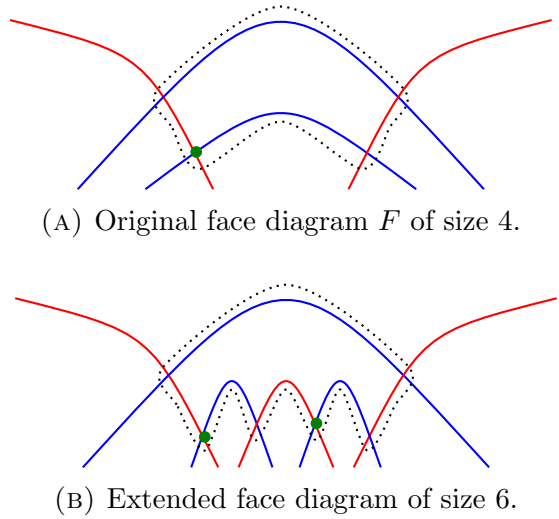
\begin{figure}[ht]
\centering

\begin{subfigure}[b]{0.48\textwidth}
\centering
\begin{tikzpicture}[scale=0.9, thick]
  \draw[red] (0,2.5) .. controls (2,2) .. (3,0);
  \draw[blue] (1,0) .. controls (4,3.3) .. (7,0);
  \draw[blue] (2,0) .. controls (4,1.5) .. (6,0);
  \draw[red] (5,0) .. controls (6,2) .. (8,2.5);

  \filldraw[green!50!black] (2.75,0.55) circle (2pt);
    \draw[dotted, thick] plot[smooth cycle] coordinates { (2.2,1.5) (2.4,0.9) (2.8,0.3) (4,1) (5.2,0.3) (5.6,0.9) (5.8,1.5) (4,2.7)};
\end{tikzpicture}
\caption{Original face diagram $F$ of size $4$.}
\label{fig:face-original}
\end{subfigure}
\hfill

\begin{subfigure}[b]{0.48\textwidth}
\centering
\begin{tikzpicture}[scale=0.9, thick]
  \draw[red] (0,2.5) .. controls (2,2) .. (3,0);
  \draw[blue] (1,0) .. controls (4,3.3) .. (7,0);
  \draw[red] (5,0) .. controls (6,2) .. (8,2.5);

  \draw[blue] (2.6,0) .. controls (3.2,1.7) .. (3.8,0);
  \draw[red] (3.3,0) .. controls (4,1.7) .. (4.7,0);
  \draw[blue] (4.2,0) .. controls (4.8,1.7) .. (5.4,0);
  
  \filldraw[green!50!black] (2.8,0.45) circle (2pt);
  \filldraw[green!50!black] (4.45,0.65) circle (2pt);
    \draw[dotted, thick] plot[smooth cycle] coordinates { (2.2,1.5) (2.4,0.9) (2.8,0.25) (3.2,1.15) (3.55,0.4) (4,1.15) (4.45,0.4) (4.8,1.15) (5.2,0.25) (5.6,0.9) (5.8,1.5) (4,2.7)};
\end{tikzpicture}
\caption{Extended face diagram of size $6$.}
\label{fig:face-extended}
\end{subfigure}

\caption{An extended face diagram is obtained from the original face diagram (\subref{fig:face-original}) by resolving a crossing (green dot) and inserting one red and one blue arc (\subref{fig:face-extended}), which increases the face size by two. The dotted curve indicates the boundary of the face diagram.}
\label{fig:face-extension}
\end{figure}
\FloatBarrier

\begin{lem} \label{lem:face_four_diagram}
Let $F$ be an interior face of an $m$-diagram
whose boundary is formed by segments of exactly four arcs.  There are exactly four possible local face diagrams
around $F$, which are shown in Figure \ref{fig:face-six-diagrams}. After resolution to a web, each such quadrilateral becomes a face of size $6$.
\end{lem}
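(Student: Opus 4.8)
The plan is to read off the size statement directly from Lemma~\ref{lem:face-size-drop-by-two} and then enumerate the local configurations by a case split on the length of the arc-segment path lying \emph{above} the face. For the size claim, one first checks, using the strict concavity of the arcs, that each of the four arcs bounding $F$ contributes exactly one segment to $\partial\tilde F$, so that $\partial\tilde F$ has exactly four arc segments: an arc meeting $\partial\tilde F$ in two disjoint segments would force at least one further arc between those segments, hence at least one extra arc segment, which is impossible both with only four arcs and by parity (the number $2k-2$ of arc segments is even, so it cannot be $5$). Then Lemma~\ref{lem:face-size-drop-by-two}(3) gives $2k-2=4$, i.e.\ $k=3$, so the associated web face has $2k=6$ sides; equivalently, the web face has four arc edges together with the two green crossing edges at $p$ and $q$, consistent with the count $|F|=|P(F)|+4$ (resp.\ $|P(F)|+5$) of Definition~\ref{defn:face_type}.

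For the enumeration, Lemma~\ref{lem:face-size-drop-by-two}(2) says the path above $\tilde F$ has either one or two arc segments, so the path below has three or two respectively, and I treat the two cases in turn. Every corner of $\partial\tilde F$ is a transversal crossing of two arc segments, which by Lemma~\ref{lem:ms_conditions} must have opposite colors; walking around $\partial\tilde F$, the four segments therefore alternate in color, so fixing the color of (say) the upper-left segment determines all four colors. In the two-above/two-below case the corners are $p$ (leftmost), a corner $t$ on the upper path, $q$ (rightmost), and a corner $b$ on the lower path, and the resulting colorings force the four arcs into a single local picture, so this case contributes exactly two face diagrams, one per choice of starting color. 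In the one-above/three-below case the unique upper arc passes over $\tilde F$ between $p$ and $q$ while the lower path uses three arcs $C_1,C_2,C_3$ (with $C_1,C_3$ of the color opposite to the upper arc and $C_2$ of the same color); the same alternation argument fixes the colors up to a global swap, and strict concavity together with the ``any two $m$'s cross at most once'' clause of Lemma~\ref{lem:ms_conditions} pins down the crossing pattern (a different arrangement of the four crossings than in the first case), giving two more face diagrams. Realizability of each of these four abstract configurations is checked by exhibiting a small explicit $m$-diagram (or by pointing to the two shapes already appearing in Figure~\ref{fig:m-diagram-to-web_2}), and these four are exactly the diagrams of Figure~\ref{fig:face-six-diagrams}.

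I expect the main work to be bookkeeping rather than anything deep, namely making the case analysis genuinely exhaustive. The points to nail down are that no arc is reused on two boundary segments (handled above via concavity and parity), that the resolution of each corner crossing of $\partial\tilde F$ to the web is compatible with Lemma~\ref{lem:face-size-drop-by-two} so that the arc-edge count really is four, and that within each case the local picture is unique given the color data — for which the decisive inputs are again the strict-concavity normalization of the arcs and the fact that two $m$'s cross at most once. A final minor point is the counting convention: the two colorings within each case are interchanged by a left--right reflection, so the total is $4$ rather than $2$ precisely because a face diagram records its boundary segments \emph{in order}, with a fixed left-to-right orientation; I will state this explicitly so that the correspondence with Figure~\ref{fig:face-six-diagrams} is unambiguous.
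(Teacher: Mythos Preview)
Your proposal is correct and follows essentially the same approach as the paper: split into the two cases (one or two arc segments above $\tilde F$) given by Lemma~\ref{lem:face-size-drop-by-two}(2), then use color alternation to see that each case yields exactly two diagrams, one per choice of starting color, for a total of four. The paper's proof is terser---it simply says ``$2$ choices (one or two segments) times $2$ choices (red/blue) gives $4$, and the rest is determined by alternation''---whereas you additionally verify the size claim via Lemma~\ref{lem:face-size-drop-by-two}(3), argue that no arc is reused on two boundary segments, and address realizability; these are worthwhile checks that the paper leaves implicit.
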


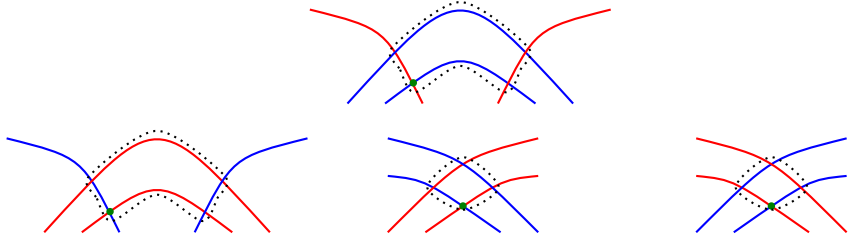
\begin{figure}[ht]
\centering

\begin{subfigure}[b]{0.24\textwidth}
\centering
\begin{tikzpicture}[scale=0.5, thick]
  \draw[red] (0,2.5) .. controls (2,2) .. (3,0);
  \draw[blue] (1,0) .. controls (4,3.3) .. (7,0);
  \draw[blue] (2,0) .. controls (4,1.5) .. (6,0);
  \draw[red] (5,0) .. controls (6,2) .. (8,2.5);

  \filldraw[green!50!black] (2.75,0.55) circle (2pt);
    \draw[dotted, thick] plot[smooth cycle] coordinates { (2.2,1.5) (2.4,0.9) (2.8,0.3) (4,1) (5.2,0.3) (5.6,0.9) (5.8,1.5) (4,2.7)};
\end{tikzpicture}
\end{subfigure}
\hfill

\begin{subfigure}[b]{0.24\textwidth}
\centering
\begin{tikzpicture}[scale=0.5, thick]
  \draw[blue] (0,2.5) .. controls (2,2) .. (3,0);
  \draw[red] (1,0) .. controls (4,3.3) .. (7,0);
  \draw[red] (2,0) .. controls (4,1.5) .. (6,0);
  \draw[blue] (5,0) .. controls (6,2) .. (8,2.5);

  \filldraw[green!50!black] (2.75,0.55) circle (2pt);
    \draw[dotted, thick] plot[smooth cycle] coordinates { (2.2,1.5) (2.4,0.9) (2.8,0.3) (4,1) (5.2,0.3) (5.6,0.9) (5.8,1.5) (4,2.7)};
\end{tikzpicture}
\end{subfigure}
\begin{subfigure}[b]{0.24\textwidth}
\centering
\begin{tikzpicture}[scale=0.5, thick]
  \draw[blue] (1,1.5) .. controls (2,1.4) .. (4,0); 
  \draw[red] (5,1.5) .. controls (4,1.4) .. (2,0); 
  \draw[red] (1,0) .. controls (3,2) .. (5,2.5); 
  \draw[blue] (5,0) .. controls (3,2) .. (1,2.5); 

  \filldraw[green!50!black] (3,0.7) circle (2pt); 
\draw[dotted, thick] plot[smooth cycle] coordinates { (2.05,1.2) (3, 0.6) (3.95,1.2) (3, 2.0)};
\end{tikzpicture}
\end{subfigure}
\begin{subfigure}[b]{0.24\textwidth}
\centering
\begin{tikzpicture}[scale=0.5, thick]

  \draw[red] (1,1.5) .. controls (2,1.4) .. (4,0); 
  \draw[blue] (5,1.5) .. controls (4,1.4) .. (2,0); 
  \draw[blue] (1,0) .. controls (3,2) .. (5,2.5); 
  \draw[red] (5,0) .. controls (3,2) .. (1,2.5); 

  \filldraw[green!50!black] (3,0.7) circle (2pt);
\draw[dotted, thick] plot[smooth cycle] coordinates { (2.05,1.2) (3, 0.6) (3.95,1.2) (3, 2.0)};
\end{tikzpicture}
\end{subfigure}
\caption{Possible face diagrams with $4$ arcs (corresponding to a face of size $6$ in the reduced web)}
\label{fig:face-six-diagrams}
\end{figure}

\begin{proof}
    By $(1)$ of Lemma \ref{lem:face-size-drop-by-two} the leftmost and rightmost point of the $F$ are connected by either one or two arc segments above the face. This gives us $4$ total options since we can pick either one or two segments and we also have a choice of red/blue. The remaining arc segments bounding $F$ from below must alternate between red and blue and are uniquely determined once the top arc segments are chosen. Hence we obtain the four diagrams in Figure \ref{fig:face-six-diagrams}.
\end{proof}

\begin{prop} \label{prop:face_diagrams}
    Any face diagram $F$ of size $2k+4$ (that is $2k+4$ arc segments) with $k\geq 0$, can be obtained by repeating the diagram extension operation to one of the four face diagrams of size $4$ (Figure \ref{fig:face-six-diagrams}) $k$ times.
\end{prop}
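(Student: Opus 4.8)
The plan is to induct on $k$. The base case $k=0$ is exactly Lemma~\ref{lem:face_four_diagram}, which exhibits the four face diagrams with four arc segments. For the inductive step it suffices to show that the extension operation is invertible on face diagrams of size at least $6$: every face diagram $D$ of size $2k+4$ with $k\ge 1$ should be of the form $\mathrm{ext}(D')$ for a unique face diagram $D'$ of size $2k+2$. Granting this, $D'$ is a $(k-1)$-fold extension of one of the four size-$4$ diagrams by the inductive hypothesis, so $D$ is a $k$-fold extension and the proposition follows.

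To construct $D'$ I would define a contraction operation that reverses, at a single location, the local move defining extension. The forced location is at (and just to the right of) the leftmost crossing $p_1$ beneath the face of $D$, since the extension always acts at the leftmost such crossing; hence any $D'$ with $\mathrm{ext}(D')=D$ must differ from $D$ only near $p_1$, which also gives the claimed uniqueness. First I would pin down the local picture around $p_1$: by Lemma~\ref{lem:ms_conditions} the two arcs through $p_1$ have opposite colors; by part~(1) of Lemma~\ref{lem:face-size-drop-by-two} the arc segments bounding the face alternate in color; and by part~(2) the portion of the boundary above the face is one or two arc segments. These constraints, together with the starting color $C$, reduce the neighborhood of $p_1$ to a short list of possibilities, each of which is visibly the output of an extension applied there. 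I would then perform the reverse move — delete the inserted red and blue arc segments and re-cross the two arcs that were pulled apart — to obtain a diagram $D'$ with two fewer boundary arc segments.

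Next I would verify that $D'$ is again a legal face diagram: same-colored arcs stay nested, every crossing remains bichromatic, and no two $m$'s cross more than once, so all three conditions of Lemma~\ref{lem:ms_conditions} survive; and the face boundary in $D'$ is still an alternating red/blue cycle of arc segments with one or two segments above it, hence a genuine face diagram of size $2k+2$. Finally I would check $\mathrm{ext}(D')=D$, which holds because the extension acts at the leftmost crossing beneath the face of $D'$, and that is exactly the site where the contraction was performed.

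The hard part will be the local case analysis in the second step: proving that near its leftmost crossing \emph{every} face diagram of size $\ge 6$ already has the shape of an extension output, so that no ``primitive'' face diagram of size $\ge 6$ escapes the induction. Concretely one must show the two arc segments slated for removal are always present with the colors and nesting dictated by the alternation property, and that re-crossing the two separated arcs can never violate any of the conditions of Lemma~\ref{lem:ms_conditions}. I expect this to come down to a finite enumeration organized by the starting color $C$ and by whether one or two arc segments lie above the face — essentially the same bookkeeping used for Lemma~\ref{lem:face_four_diagram} together with the crossing dictionary of Section~\ref{sec:crossing_probabilities} — but it must be done carefully so that each contraction keeps us within the class of valid $m$-diagram fragments. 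Once this local lemma is in place, the induction closes immediately.
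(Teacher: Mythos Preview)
Your induction-by-contraction approach is correct, but it is considerably more elaborate than what the paper does. The paper's proof is direct rather than inductive: it observes, via Lemma~\ref{lem:face-size-drop-by-two}, that a face diagram is completely determined by two pieces of data — the configuration above the face (one or two arc segments, with a color choice, giving exactly the four possibilities of Lemma~\ref{lem:face_four_diagram}) and the alternating red/blue sequence below the face. Given a face diagram $F$ of size $2k+4$, one simply selects the unique size-$4$ base diagram whose top matches that of $F$, and then applies the extension operation $k$ times; since each extension adds one red and one blue segment to the bottom while leaving the top unchanged, the result reproduces $F$.

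The upshot is that the ``hard part'' you flag — the local case analysis showing every size $\ge 6$ diagram is an extension output near its leftmost crossing — never needs to be carried out. The paper bypasses it by noting that the space of face diagrams is parametrized so simply (four top types $\times$ a nonnegative integer for bottom length) that one can build $F$ forward from the base in one stroke, rather than peel it back one layer at a time. Your approach would work and has the virtue of making the inverse operation explicit, but it trades a one-line structural observation for a finite enumeration that the paper does not need.
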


\begin{proof}
    By $(1)$ of Lemma \ref{lem:face-size-drop-by-two} the leftmost and rightmost intersection point of the $F$ are connected by either one or two arc segments above the face. Below the face is a sequence of alternating red, blue arc segments. Find the face diagram of size $4$ that has the same arc segments above its face as $F$. Then repeat the face diagram extension operation $k$-times.
\end{proof}

We compare the total number of occurrences of a fixed face diagram and of its
extended version across all reduced webs associated to $3\times n$ tableaux. Let $F$ be a face diagram of a face of size $2k$. Consider the intersection point immediately to the left of the face of $F$, marked by $p$ in Figure~\ref{fig:crossing_face_depth}. Following the two arcs that intersect at $p$, we trace them toward the boundary until they terminate at points labeled $s$ and $e$.

We condition on the number of arcs that cross these segments. Specifically, let $F_a$ denote the number of arcs that cross the segment from $s$ to $p$, and let $F_b$ denote the number of arcs that cross the segment from $p$ to $e$, minus one.

Next, we consider the corresponding extended face diagram, obtained by resolving the crossing at the green point and inserting a red and a blue arc, as shown in Figure~\ref{fig:crossing_face_depth_2}. In the extended face diagram the point $e$ is moved to where the last inserted arc touches the boundary as shown. Note the values of $F_a$ and $F_b$ remain unchanged.

The main observation is that this modification affects only the portion of the diagram between $s$ and $e$. Outside of this region, the original and extended diagrams are isomorphic and contribute identically to the probability that the face lies at distance at least $m$ from the boundary. Therefore, to compare the probabilities associated to the original and extended face diagrams, it suffices to analyze the subdiagram between $s$ and $e$, conditioned on $F_a$ and $F_b$. Let $ F_{se} $ denote the region of the $ m $-diagram between the steps $ s $ and $ e $ that lies beneath the face $ F $. Let $F_{es}$ denote the remaining region of the $m$-diagram that does not include $F$ or $F_{se}$.

\begin{figure}[ht]
\centering
\begin{minipage}[b]{0.48\textwidth}
\centering
\begin{tikzpicture}[scale=0.9, thick]

  \draw (0,0) -- (8,0);

  \draw[red] (2,1.6) .. controls (2.1,1.5) .. (3,0);
  \draw[red] (1.9,1.4) .. controls (2,1.3) .. (2.2,1);
  \draw[red] (1.8,1.2) .. controls (1.9,1.1) .. (2.1,0.8);
  \draw[red] (1.7,1) .. controls (1.8,0.9) .. (2,0.6);

  \draw[blue] (2,0) .. controls (4,1.5) .. (6,0);
  \draw[blue] (1,0) .. controls (4,3.3) .. (7,0);
  \draw[blue] (2.6,0.25) .. controls (2.75,0.35) .. (3,0.45);
  \draw[blue] (3,0) .. controls (3.2,0.25) .. (3.3,0.35);

  \draw[red] (5,0) .. controls (5.9,1.5) .. (6,1.6);

  \node at (1,-0.2) {$s$};
  \node at (3,-0.2) {$e$};
  \filldraw[black] (2.22,1.32) circle (2pt);
  \node at (2.25,1.7) {$p$};

  \filldraw[green!50!black] (2.7,0.5) circle (2pt);

  \fill[gray!20, opacity=0.5] (1,0) -- (2.22,1.3) -- (3,0) -- cycle;
\end{tikzpicture}
\caption{Face $F$ diagram of a face of size $4$, with $F_a = 4$ red arcs crossing over the left endpoint $s$ and $F_b = 3$ blue arcs crossing over the right endpoint $e$. $F_{se}$ shaded in grey.}
\label{fig:crossing_face_depth}
\end{minipage}
\hfill
\begin{minipage}[b]{0.48\textwidth}
\centering
\begin{tikzpicture}[scale=0.9, thick]
  \draw (0,0) -- (8,0);

  \draw[red] (2,1.6) .. controls (2.1,1.5) .. (3,0);
  \draw[red] (1.9,1.4) .. controls (2,1.3) .. (2.2,1);
  \draw[red] (1.8,1.2) .. controls (1.9,1.1) .. (2.1,0.8);
  \draw[red] (1.7,1) .. controls (1.8,0.9) .. (2,0.6);
  \draw[blue] (1,0) .. controls (4,3.3) .. (7,0);
  \draw[red] (5,0) .. controls (5.9,1.5) .. (6,1.6);

  \draw[blue] (2.6,0) .. controls (3.2,1.7) .. (3.8,0);
  \draw[red] (3.3,0) .. controls (4,1.7) .. (4.7,0);
  \draw[blue] (4.2,0) .. controls (4.8,1.7) .. (5.4,0);
  \draw[blue] (4.45,0.25) .. controls (4.55,0.45) .. (4.65,0.55);
  \draw[blue] (4.7,0) .. controls (4.8,0.35) .. (4.9,0.45);

  \node at (1,-0.2) {$s$};
  \node at (4.7,-0.2) {$e$};
  \node at (2.25,1.7) {$p$};
  \filldraw[black] (2.22,1.32) circle (2pt);

  \filldraw[green!50!black] (2.73,0.45) circle (2pt);
  \filldraw[green!50!black] (4.45,0.65) circle (2pt);
  \node at (4.48,1.2) {$p'$};

  \fill[gray!20, opacity=0.5] (1,0) -- (2.22,1.3) -- (3,0) -- cycle;

  \fill[gray!30, opacity=0.5]
    (2.6, 0)
    .. controls (3.2,1.7) ..
    (3.8, 0) -- (2.6,0) -- (3.8, 0) -- cycle;
     \fill[gray!30, opacity=0.5]
    (3.3, 0)
    .. controls (4,1.7) ..
    (4.7, 0) -- (3.3,0) -- (4.7, 0) -- cycle;
\end{tikzpicture}
\caption{Extended face diagram of a face of size $6$, obtained by uncrossing the arcs at the green point and inserting one red and one blue arc. The values of $F_a$ and $F_b$ remain the same. $F_{se}$ shaded in grey.}
\label{fig:crossing_face_depth_2}
\end{minipage}
\end{figure}

\begin{lem} \label{lem:shortest_dual_regions}
    The shortest path from a face $ F $ to the boundary is equal in length to the shortest path from $ F $ to the boundary that lies entirely within either the region $ F_{se} $ or the region $ F_{es} $.
\end{lem}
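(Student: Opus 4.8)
The plan is to establish the inequality $\geq$ by a rerouting argument; the reverse inequality $\leq$ is immediate, since a path confined to one of the two regions is in particular a path from $F$ to the boundary. Concretely, I would show that any shortest dual path $P$ from $F$ to the boundary can be modified, without increasing its length, so that after its first step it stays entirely inside $F_{se}$, or entirely inside $F_{es}$, until it reaches the boundary.

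The geometric input is that the arc segment from $p$ down to $s$, the arc segment from $p$ down to $e$, and the boundary interval between $s$ and $e$ together form a Jordan curve $C$ in the closed upper half plane whose interior is exactly the region $F_{se}$; the face $F$ and all of $F_{es}$ lie outside $C$. Hence every edge of the $m$-diagram that separates a face of $F_{se}$ from a face of $F_{es}$ lies on $C$, and in fact on one of its two arc segments---it cannot lie on the boundary interval, which is part of the boundary line and has no face on its far side. In particular $P$ can pass between $F_{se}$ and $F_{es}$ only by crossing one of the finitely many edges into which the transversal arcs subdivide the two segments $s$--$p$ and $p$--$e$.

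Next I would set up a \emph{ladder} along each of these two segments. Along $s$--$p$, the $F_a$ transversal arcs meet it in $F_a$ points, cutting it into edges $E_0,\dots,E_{F_a}$; the $F_{se}$-side of $E_i$ borders a face $L_i$ and the $F_{es}$-side borders a face $M_i$, with $L_i$ adjacent to $M_i$ across $E_i$ and $L_i$ adjacent to $L_{i+1}$ (resp.\ $M_i$ to $M_{i+1}$) across the segment of the $(i+1)$-st transversal arc; similarly along $p$--$e$. Given a shortest path $P$, consider a maximal subpath of it lying inside $F_{se}$---an \emph{excursion}---entered and exited across edges of these two segments. If the excursion is shallow, entering and exiting across the same segment at nearby edges, it can be slid onto the corresponding $M$-ladder, giving a path of no greater length that avoids $F_{se}$ there. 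If it is deep, one instead uses that $F_{se}$ meets the boundary line along the entire interval $s$--$e$: from any face of $F_{se}$ there is a route to the boundary staying inside $F_{se}$, so the tail of $P$ from the moment it first enters $F_{se}$ may be replaced by a within-$F_{se}$ path to the boundary of no greater length. Either replacement strictly decreases the number of switches of $P$ between $F_{se}$ and $F_{es}$, so iterating yields a shortest path with no switches, which is exactly what is wanted.

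The main obstacle is making this exchange step precise, because the face arrangements on the two sides of a given arc segment are not mirror images, so a deep excursion into $F_{se}$ cannot simply be reflected onto the $F_{es}$-side. The estimate to pin down is that a deep excursion never beats the alternative of first routing, within $F_{es}$, to the appropriate ladder rung and then descending within $F_{se}$ to the interval $s$--$e$; excursions that enter across $s$--$p$ and exit across $p$--$e$ need separate treatment, using that $F$ itself occupies the apex $p$ so that any $F_{es}$-route around that corner can be arranged without lengthening. Careful bookkeeping of the path lengths through these surgeries, together with the degenerate cases at $s$, $e$, and $p$, then completes the argument.
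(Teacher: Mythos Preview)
Your approach is essentially the same rerouting argument as the paper's, and the geometric input you identify---the nested/ladder structure of the transversal arcs along the segments $s$--$p$ and $p$--$e$---is exactly what the paper uses. The main difference is the decomposition: rather than performing surgery on each excursion separately, the paper simply takes the \emph{last} dual edge $e_0$ at which the path switches regions (say from $f_1\in F_{se}$ to $f_2\in F_{es}$), and replaces the entire prefix $F\to f_2$ by a path that leaves $F$ in the appropriate direction and travels parallel to the arc segment $(p,s)$ or $(p,e)$ directly to $f_2$. Because the transversal arcs of the same color are nested, this parallel path is no longer than the original prefix, and the remainder of the path already lies in $F_{es}$ by choice of $e_0$. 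This single replacement avoids the iterative surgery and the case analysis on shallow versus deep excursions that you anticipate as the main obstacle; in particular it sidesteps the issue you flag about excursions entering across one segment and exiting across the other. That said, the paper's proof is terse at exactly the point where you are cautious---the claim that the parallel path is no longer than the original prefix is asserted via the nesting property without further bookkeeping---so the gap you worry about is present in both arguments and is ultimately handled by the same nested-arc observation.
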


\begin{proof}
    Suppose for contradiction that there exists a shortest path $ \gamma $ in the dual graph from $ F $ to a boundary face that passes through both $ F_{se} $ and $ F_{es} $, and that no path of equal or shorter length exists entirely within either region.

    Let $ e_0 $ denote the last dual edge in $ \gamma $ that crosses between the two regions $ F_{se} $ and $ F_{es} $. Let $ f_1 $ and $ f_2 $ be the faces that correspond to the vertices of $ e_0 $, with $ f_1 \in F_{se} $ and $ f_2 \in F_{es} $, and suppose the path moves from $ f_1 $ to $ f_2 $. Let $ \gamma_1 $ be the subpath of $ \gamma $ from $ F $ to $ f_2 $.

    Now consider an alternate path $ \gamma_2 $ from $ F $ to $ f_2 $ that travels entirely within $ F_{se} $, following parallel to the arc segments $ (p, s) $ or $ (p, e) $ (or $ (p', e) $ in the extended face setting). Assuming we exit $F$ in the correct direction such a path can be chosen to remain in one region.

    Furthermore, because the arcs of the same color crossing $ (p, s) $ or $ (p, e) $ (or $ (p', e) $ in the extended face setting) are nested, the alternate path $ \gamma_2 $ has length at most equal to that of $ \gamma_1 $. Replacing $ \gamma_1 $ with $ \gamma_2 $ yields a path from $ F $ to the boundary of the same or shorter length that lies entirely within $ F_{se} $ or $ F_{es} $, contradicting the assumption.

    Therefore, any shortest path from $ F $ to the boundary may be taken to lie entirely within one of the two regions.
\end{proof}

We now formalize the comparison between the probabilities that a face in a given face diagram 
and its corresponding extended face diagram lie at distance at least $m$ from the boundary. 
Recall from Proposition~\ref{prop:face_diagrams} that there are four possible face diagrams of size $2k$ 
that can serve as the base configuration.  
Throughout, we fix one such base diagram and condition on the crossing data 
$(F_a, F_b)$, where $F_a$ (resp.\ $F_b$) denotes the number of arcs crossing the segment from 
$p$ to $s$ (resp.\ from $p$ to $e$) in the $m$-diagram.  

Let $T_1(F_a, F_b, n, d, k)$ be the number of occurrences of the chosen face diagram of size $2k$ 
with crossing data $(F_a, F_b)$ among all reduced webs in $W_n$, such that the interior face is at 
distance at least $d$ from the boundary.  
Similarly, let $T_2(F_a, F_b, n, d, k)$ denote the number of occurrences of the extended face diagram 
(obtained from the original via the arc-insertion operation) with the same crossing data $(F_a, F_b)$, 
and such that the interior face is also at distance at least $d$ from the boundary.

We define the normalized limiting densities:
\[
L_d^1 := \lim_{n \to \infty} \frac{T_1(F_a, F_b, n, d, k)}{n \cdot |W_n|}, \quad
L_d^2 := \lim_{n \to \infty} \frac{T_2(F_a, F_b, n, d, k)}{n \cdot |W_n|}.
\]

\begin{thm}
\label{thm:extension_probability_ratio}
Fix $m \geq 0$ and crossing numbers $(F_a, F_b)$. Then the ratio of limiting probabilities $L_d^2 / L_d^1$ is given by a discrete hitting probability expression involving the functions $h_a(x, y; d)$ defined in Definition \ref{defn:hitting_probs}. The exact formula depends on the color of the arc beginning at $s$:

\begin{itemize}
    \item If the arc at $s$ is blue, then
    \[
    \frac{L_d^2}{L_d^1} =
    \sum_{t, s = d+1}^\infty
    \frac{h_{(0,t)}(F_a,1;d) \cdot h_{(s,0)}(1,t-1;d) \cdot h_{(0,F_b+1)}(s-1,1;d)}
         {h_{(0,F_b+1)}(F_a,1;d)}.
    \]

    \item If the arc at $s$ is red, then
    \[
    \frac{L_d^2}{L_d^1} =
    \sum_{t, s = d+1}^\infty
    \frac{h_{(t,0)}(1,F_a;d) \cdot h_{(0,s)}(t-1,1;d) \cdot h_{(F_b+1,0)}(1,s-1;d)}
         {h_{(F_b+1,0)}(1,F_a;d)}.
    \]
\end{itemize}
Note that this ratio does not depend on the face diagram that is chosen except for the color of the beginning arc.
\end{thm}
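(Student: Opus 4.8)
The plan is to strip away everything that the extension operation leaves untouched, so that the ratio collapses to a ratio of hitting probabilities for the lattice walk tracing the region $F_{se}$ beneath the face, and then to evaluate those probabilities on the truncated quadrant $Q_d$ (here $m=d$). The geometric backbone is Lemma~\ref{lem:shortest_dual_regions}, which gives $\dist(F,\partial)=\min\bigl(\dist_{F_{se}}(F,\partial),\dist_{F_{es}}(F,\partial)\bigr)$, so that the event $\{\dist(F,\partial)\ge d\}$ is the intersection $\{\dist_{F_{se}}\ge d\}\cap\{\dist_{F_{es}}\ge d\}$. Since the extension operation alters the $m$-diagram only strictly between $s$ and $e$ below the face, the original and extended diagrams are isomorphic outside $F_{se}$; in particular $F_{es}$, the conditioning data $(F_a,F_b)$, and the lattice coordinates at which the $F_{se}$-subpath begins are common to both.

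First I would run the same limiting argument as in the proof of Theorem~\ref{thm:face_probabilities} — the mid-window reduction together with the strong Markov property of $(A_t,B_t)$, valid in the $n\to\infty$ limit by Theorem~\ref{thm:iid_walk} — to factor
\[
\frac{T_i(F_a,F_b,n,d,k)}{n\,|W_n|}\ \xrightarrow[n\to\infty]{}\ C(F_a,F_b,d,k)\cdot P^{(i)}_d ,
\]
where $C$ collects the contribution of $F_{es}$ together with the conditioning on $(F_a,F_b)$ and the part of the face diagram outside $F_{se}$ — including, by the decomposition above, the factor for $\dist_{F_{es}}\ge d$ — and $P^{(i)}_d$ is the probability that the $F_{se}$-subpath, started at the coordinate fixed by $(F_a,F_b)$, realizes the prescribed arc configuration of $F_{se}^{(i)}$ with $\dist_{F_{se}}\ge d$. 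Since $C$ does not depend on $i$, this yields $L_d^2/L_d^1=P^{(2)}_d/P^{(1)}_d$.

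Next I would identify $P^{(1)}_d$ and $P^{(2)}_d$ with hitting probabilities on $Q_d$. By Propositions~\ref{prop:crossing_probabilities_red}--\ref{prop:crossing_probabilities_blue} and Remark~\ref{rem:crossing_starts}, the $F_{se}$-subpath starts at $(F_a,1)$ when the arc at $s$ is blue and at $(1,F_a)$ when it is red, the $F_a$ recording the arcs crossing $\overline{sp}$; and by the identification ``depth $=x+y$'' from the opening of Section~\ref{sec:faces_m}, the requirement $\dist_{F_{se}}(F,\partial)\ge d$ is precisely that this subpath stay in $Q_d$ until it is absorbed on $\partial Q_d$. The combinatorial type of $F_{se}$ then pins down the absorption. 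For the original diagram the arc at $s$ reaches its partner without a further crossing below the face, so the subpath is a single excursion absorbed at the point recording closure across the $\overline{pe}$ arc, giving $P^{(1)}_d=h_{(0,F_b+1)}(F_a,1;d)$ in the blue case and $h_{(F_b+1,0)}(1,F_a;d)$ in the red case. For the extended diagram the inserted red and blue arcs split the subpath into three consecutive excursions; applying the strong Markov property at the two transition points and summing over their intermediate absorption points $(0,t)$ and $(s,0)$, with $t,s\ge d+1$ forced by the constraint to stay in $Q_d$ and re-enter its interior (the $+1$ shift arising, as in Proposition~\ref{prop:crossing_probabilities_red}, from the arc opened simultaneously with a closing), gives
\[
P^{(2)}_d=\sum_{t,s\ge d+1}h_{(0,t)}(F_a,1;d)\,h_{(s,0)}(1,t-1;d)\,h_{(0,F_b+1)}(s-1,1;d)
\]
in the blue case, and the stated red analogue. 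Dividing gives the claimed identity; since $C$ and the starting coordinate depend on the base face diagram only through the color at $s$, so does the ratio.

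The hard part is the factorization in the second paragraph: showing that in the $n\to\infty$ limit the normalized count of face diagrams genuinely splits as a common factor times a $F_{se}$-factor, and that this is compatible with passing to the ratio. This rests on (i) the mid-window argument, so that every local window used inside $F_{se}$ sits where the walk is locally i.i.d.; (ii) almost-sure finiteness of the constrained excursions inside $F_{se}$, so that only finitely many crossing events are involved; and (iii) the clean splitting of the depth-$\ge d$ constraint into an $F_{se}$-part and an $F_{es}$-part once $(F_a,F_b)$ is fixed, which is exactly what Lemma~\ref{lem:shortest_dual_regions} provides. A secondary, essentially bookkeeping difficulty is the case analysis identifying the precise absorption points and the index ranges $t,s\ge d+1$ from the geometry of the two inserted arcs.
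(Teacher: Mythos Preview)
Your proposal is correct and follows essentially the same route as the paper: isolate the region $F_{se}$ via Lemma~\ref{lem:shortest_dual_regions}, cancel the common $F_{es}$ contribution in the ratio, and then translate the $F_{se}$-subpath into one (original) versus three (extended) axis-hitting excursions on $Q_d$ using the strong Markov property. If anything, you are more explicit than the paper about why the factorization $L_d^i=C\cdot P^{(i)}_d$ holds and about the bookkeeping for the starting points and the $t,s\ge d+1$ ranges; the paper's proof is terser on exactly these points (and indeed contains a small slip, labeling the three-segment decomposition as the ``original'' configuration when it is the extended one).
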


\begin{proof}
Fix the crossing configuration $(F_a, F_b)$ as in the theorem, and consider the arc segment between the endpoints $s$ and $e$ that bounds the face $F$. As in Figure \ref{fig:crossing_face_depth} and Figure \ref{fig:crossing_face_depth_2}, the structure of the diagram outside the interval $[s,e]$ remains unchanged under the face diagram extension operation. 

The probability that the shortest path in the outer region $F_{es}$ is at least $d$ is the same for both diagrams and is independent of the probability that the shortest path in the region $F_{se}$ is at least $d$. Therefore, those probabilities will cancel in the ratio.

We consider the sequence along the segment between $s$ and $e$. We follow a similar argument to Propositions \ref{prop:crossing_probabilities_red} and \ref{prop:crossing_probabilities_blue}. The only difference is we restrict the domain to $Q_d$. Observe that if any point in time between $s$ and $e$ there are less than $d$ arcs open then we can draw a path through those arcs that reaches the boundary in less than $d$ dual edges.

Now consider the crossing probabilities between $s$ and $e$. The expression for these probabilities is given by products of discrete harmonic functions $h_a(x,y; d)$, where $h_a(x,y; d)$ represents the probability that a path starting at $(x, y)$ first reaches the boundary at the point $a$, while remaining entirely inside the region $Q_d$. Now we follow a similar argument to \ref{thm:face_probabilities}.

\smallskip

Case 1: Arc at $s$ is blue. The original configuration involves three path segments:

\begin{enumerate}
    \item A blue path from the starting point $(F_a,1)$ to some intermediate point $(0,t)$,
    \item A red path from $(1,t{-}1)$ to $(s,0)$,
    \item A blue path from $(s{-}1,1)$ to $(0,F_b+1)$.
\end{enumerate}

The numerator in the expression corresponds to summing over all such intermediate points $(t, s)$ with $t, s \geq d+1$ to ensure that the face stays at distance at least $d$ from the boundary at every step. The full probability that the extended face lies at distance at least $d$ is then given by
\[
L^2_d \propto \sum_{t, s = d+1}^\infty h_{(0,t)}(F_a,1;d) \cdot h_{(s,0)}(1,t{-}1;d) \cdot h_{(0,F_b+1)}(s{-}1,1;d),
\]
while the probability that the original face lies at distance at least $d$ is simply
\[
L^1_d \propto h_{(0,F_b+1)}(F_a,1;d).
\]
Taking the ratio gives the formula in the theorem.

\smallskip

Case 2: Arc at $s$ is red. A symmetric argument applies, with red and blue roles swapped. The corresponding expression is:
\[
\frac{L^2_d}{L^1_d} =
\sum_{t, s = d+1}^\infty
\frac{h_{(t,0)}(1,F_a;d) \cdot h_{(0,s)}(t{-}1,1;d) \cdot h_{(F_b+1,0)}(1,s{-}1;d)}
     {h_{(F_b+1,0)}(1,F_a;d)}.
\]

\smallskip

In both cases, the denominator represents the probability that the original configuration stays in $Q_d$ and reaches its terminal point, while the numerator accounts for the three-part path of the extended configuration, still constrained to remain within $Q_d$ throughout. Since all other contributions to the face's distance from the boundary occur outside $[s,e]$ and are identical between the two diagram, this completes the comparison.
\end{proof}

\begin{defn}
Let $\tilde Q_d \subset \mathbb{C}$ denote the $60^\circ$ wedge with an equilateral triangle of side length $d$ removed from the origin. Define the shear map
\[
S: \mathbb{C} \to \mathbb{C}, \quad S(1) = 1, \quad S(i) = \frac{1}{2} + \frac{\sqrt{3}}{2}i.
\]
Then $S$ sends the domain $Q_d$ (the first quadrant with a triangle removed) to $\tilde Q_d$.

We will perform calculations in $\tilde Q_d$ because under this shear, the steps of the lattice walk $1, 1 - i, -1$ are mapped to:
\[
1, \quad -\frac{1}{2} + \frac{\sqrt{3}}{2}i, \quad -\frac{1}{2}-\frac{\sqrt{3}}{2}i,
\]
As a result, the scaling limit of the random walk in $\tilde Q_d$ is a Brownian motion with covariance matrix
\[
\begin{pmatrix}
1 & 0 \\
0 & 1
\end{pmatrix}
\quad \text{(up to scaling by a constant)}.
\]
\end{defn}

\begin{prop}
\label{prop:Poisson_kernel}
The wedge with a triangle removed can be obtained from the upper half--plane by a Schwarz--Christoffel map 
with prevertices at $\pm 1$, producing interior angles of $2\pi/3$ at those vertices.
Let $\phi: \mathbb{H} \to \tilde Q_d$ be the conformal map given by
\[
\phi(\zeta) = A+B\int^\zeta \frac{C}{(w+1)^{1/3}(w-1)^{1/3}} \, dw,
\]
where the constant $C$ is chosen so that the interval $[-1,1]$ on the real line maps to a segment of length $d$ in $\tilde Q_d$. $A$ and $B$ are some constants with $|B|=1$. Let $z  \in \mathbb{H}, \operatorname{Im}(z)>0$ and $t \in \mathbb{R} \setminus (-1,1)$, so that  $\phi(t) \in \partial Q_d$ lies on one of the wedge rays. Then the Poisson kernel in $\tilde Q_d$ satisfies:
\[
P_{\tilde Q_d}(\phi(z), \phi(t)) = \frac{1}{\pi} \cdot \frac{\operatorname{Im}(z)}{|z - t|^2} \cdot |\phi'(t)|^{-1}.
\]
\end{prop}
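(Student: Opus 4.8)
The plan is to recognize the claimed identity as the conformal transformation law for the Poisson kernel, so that essentially all the work lies in verifying that $\phi$ is a conformal bijection $\mathbb H\to\tilde Q_d$ which extends analytically across the two boundary rays; once that is in place the formula follows from the conformal invariance of harmonic measure together with the explicit half-plane Poisson kernel.

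First I would set up $\phi$ as a Schwarz--Christoffel map. The domain $\tilde Q_d$ is an unbounded polygon: its boundary is formed by the two rays of the $60^\circ$ wedge truncated at $P_1=d$ and $P_2=de^{i\pi/3}$, joined by the third side $[P_1,P_2]$ of the removed equilateral triangle. I would check that at each of $P_1,P_2$ the removed triangle subtracts a $60^\circ$ sector from the local half-plane, so the interior angle of $\tilde Q_d$ there is $\pi-\pi/3=2\pi/3$, while at infinity there is a single vertex with $\phi(\infty)=\infty$. Placing the prevertex at infinity at $\zeta=\infty$ and the other two prevertices at $\pm1$ exhausts the three real degrees of freedom of $\operatorname{Aut}(\mathbb H)$; the Schwarz--Christoffel theorem for unbounded polygons then gives that a map of the stated form, with exponents $\tfrac{2\pi/3}{\pi}-1=-\tfrac13$ at $\pm1$ and no integrand factor for the prevertex at $\infty$, is a conformal bijection onto $\tilde Q_d$ for a suitable branch of the integrand and suitable constants. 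Here $A$ is a translation, $B$ with $|B|=1$ a rotation, and $C$ is determined by the requirement that $\phi$ send $[-1,1]$ onto the length-$d$ segment $[P_1,P_2]$; the open rays $(-\infty,-1)$ and $(1,\infty)$ map onto the two open wedge rays. Since $\phi'(w)=BC(w+1)^{-1/3}(w-1)^{-1/3}$ is analytic and nonvanishing on $\mathbb H\cup(\mathbb R\setminus\{-1,1\})$, $\phi$ is conformal up to and across each ray.

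Next I would invoke the conformal invariance of harmonic measure: for a Borel set $E$ contained in the two open rays of $\partial\mathbb H$, $\omega_{\mathbb H}(z,E)=\omega_{\tilde Q_d}(\phi(z),\phi(E))$. Writing $d\omega_{\mathbb H}(z,\cdot)=P_{\mathbb H}(z,\cdot)\,d\sigma_{\mathbb H}$ and $d\omega_{\tilde Q_d}=P_{\tilde Q_d}\,d\sigma_{\tilde Q_d}$, and using that a conformal map pulls back arc length by the factor $|\phi'|$, i.e.\ $\phi^{*}(d\sigma_{\tilde Q_d})=|\phi'|\,d\sigma_{\mathbb H}$, a change of variables turns this identity into $P_{\tilde Q_d}(\phi(z),\phi(t))\,|\phi'(t)|=P_{\mathbb H}(z,t)$ for $z\in\mathbb H$ and $t$ in the open rays. (Equivalently I could argue by uniqueness: the right-hand side of the proposition, viewed as a function of $\phi(z)\in\tilde Q_d$, is harmonic since $\phi$ is conformal, is nonnegative, vanishes on $\partial\tilde Q_d\setminus\{\phi(t)\}$, and integrates to $1$ against arc length by the same change of variables, hence equals $P_{\tilde Q_d}(\phi(z),\phi(t))$.) Finally I would substitute the explicit half-plane kernel $P_{\mathbb H}(z,t)=\tfrac1\pi\operatorname{Im}(z)/|z-t|^2$ to obtain the stated formula, and note that the prevertices $t=\pm1$ are excluded because they correspond to the corners $P_1,P_2$ of $\tilde Q_d$, where $\phi$ has a corner and $|\phi'|$ degenerates, rather than to interior points of a wedge ray.

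I expect the only genuine obstacle to be the Schwarz--Christoffel identification in the first step: one must check carefully that the prescribed prevertex/exponent data really produces $\tilde Q_d$ (the $2\pi/3$ interior-angle computation, and that the prevertex at infinity contributes no integrand factor and is consistent with the turning-angle condition for an unbounded polygon), and that the constants can be chosen so that the image is exactly $\tilde Q_d$ with triangle side of the prescribed length $d$. Once this is settled, the transformation of the Poisson kernel is the standard conformal invariance of harmonic measure, and the conclusion is a one-line substitution.
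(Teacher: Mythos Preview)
Your proposal is correct and follows essentially the same approach as the paper: identify $\phi$ as the Schwarz--Christoffel map with prevertices $\pm1$ and interior angles $2\pi/3$, then derive the Poisson kernel formula from conformal invariance of harmonic measure and the explicit half-plane kernel. The paper additionally computes $C$ explicitly via a Beta-function integral, but otherwise your argument is at least as detailed as the paper's.
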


\begin{proof}
The conformal map $\phi$ is obtained from the Schwarz--Christoffel formula mapping the upper half-plane $\mathbb{H}$ to the region $\tilde Q_d$. The integrand
\[
\frac{1}{(w+1)^{1/3}(w-1)^{1/3}}
\]
produces internal angles of $2\pi/3$ at $w = \pm 1$, matching the corner angles of the removed equilateral triangle.

To determine the constant $C$, we compute
\[
\int_{-1}^1 (1 + w)^{-1/3}(1 - w)^{-1/3} \, dw.
\]
Substituting $w = 2x - 1$, we obtain
\[
2^{1/3} \int_0^1 x^{-1/3}(1 - x)^{-1/3} \, dx = 2^{1/3} B(2/3, 2/3) = 2^{1/3} \cdot \frac{\Gamma(2/3)^2}{\Gamma(4/3)}.
\]
Setting $C = d \cdot 2^{-1/3}\cdot B(2/3, 2/3)^{-1}$ ensures that the image of $[-1,1]$ has length $d$, matching the corresponding side length in $\tilde Q_d$.

The Poisson kernel in $\mathbb{H}$ is given by
\[
P_{\mathbb{H}}(z, t) = \frac{1}{\pi} \cdot \frac{\operatorname{Im}(z)}{|z - t|^2}.
\]
By conformal invariance, the Poisson kernel transforms under $ \phi $ as
\[
P_{\tilde Q_d}(\phi(z), \phi(t)) = P_{\mathbb{H}}(z, t) \cdot |\phi'(t)|^{-1} = \frac{1}{\pi} \cdot \frac{\operatorname{Im}(z)}{|z - t|^2} \cdot |\phi'(t)|^{-1}.
\]
Finally, we observe that the domain $ \tilde Q_d $ obtained from the Schwarz--Christoffel mapping lies in a rotated and translated position in the complex plane. However, we may apply a composition of a rotation and translation so that $ \tilde Q_d $ coincides with the image of the shear map $ S $ applied to $ Q_d $. The Poisson kernel is preserved under rigid motions, so this does not affect the formula for $ P_{\tilde Q_d} $.
\end{proof}

\begin{prop}
\label{prop:ratio_decay}
Take any $d\geq 1$ possibly with $d \to \infty$ as $n \to \infty$, but $d = o(n)$. Then for any face diagram,
\[
\frac{L^2_d}{L^1_d} \lesssim \frac{1}{d^2}.
\]
\end{prop}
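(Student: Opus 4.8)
The plan is to estimate the exact formula of Theorem~\ref{thm:extension_probability_ratio} by passing to the Brownian scaling limit in a rescaled domain. I would treat the case where the arc at $s$ is blue, the red case being symmetric. First, apply the shear map $S$ so that $Q_d$ becomes the wedge--with--triangle--removed $\tilde Q_d$ and, by Theorem~\ref{thm:iid_walk} together with the isotropy of the sheared step distribution, the relevant constrained lattice walk has a standard planar Brownian motion as its scaling limit; then rescale by $1/d$ so that $\tilde Q_d$ becomes the \emph{fixed} domain $\tilde Q_1$, a $60^\circ$ wedge with a unit equilateral triangle removed, on which $1/d$ plays the role of the lattice mesh. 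On this fixed domain I would replace each discrete harmonic function $h_a(x,y;d)$ by $\tfrac1d$ times the continuum Poisson kernel $P_{\tilde Q_1}$ at the rescaled positions of $a$ and $(x,y)$, which by Proposition~\ref{prop:Poisson_kernel} is computed through the Schwarz--Christoffel map $\phi\colon\mathbb H\to\tilde Q_1$ via $\tfrac1\pi\,\operatorname{Im}(z)\,|z-t|^{-2}\,|\phi'(t)|^{-1}$.

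The geometric heart of the argument is the behavior of $\phi$: the outer edge of the removed triangle (the segment $x+y=d$, i.e.\ exactly where the face $F$ and the endpoints $s,e$ live) is the image of the bounded interval $[-1,1]$, the two wedge rays are the images of $(-\infty,-1]$ and $[1,\infty)$, and at a $2\pi/3$ corner one has $\phi(w)-\phi(\pm1)\asymp(w\mp1)^{2/3}$. In the formula of Theorem~\ref{thm:extension_probability_ratio} the source $(F_a,1)$ and the terminal point $(0,F_b+1)$ lie at lattice distance $1$ from a ray, hence at rescaled distance $\asymp 1/d$ from $\partial\tilde Q_1$, while each intermediate hitting point $(0,t),(s,0)$ and each restart point $(1,t-1),(s-1,1)$ carries one coordinate equal to $1$ (rescaled distance $\asymp 1/d$ from a ray) together with one coordinate $\ge d$ (distance $\gtrsim 1$ from the corner of $\tilde Q_1$, since $t,s\ge d+1$). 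The plan is then to feed these scales into the Poisson kernel and track which powers of $1/d$ survive: the single kernel factor in the denominator $h_{(0,F_b+1)}(F_a,1;d)$ cancels against a matching factor in the three--segment numerator, and what is left is two extra Poisson--kernel evaluations at points at rescaled distance $\asymp 1/d$ from the boundary near the removed triangle, each of which is $O(1/d)$. This is precisely the path--side reflection of the two extra arc segments created by the extension: each forces the arc emanating from $p$ to make an additional excursion out to rescaled distance $\gtrsim 1$ along a ray — the content of the constraint $t,s\ge d+1$, which is what keeps the face at depth $\ge d$ — and then back into the $O(1/d)$--neighborhood of the face region, and each round trip contributes a factor $O(1/d)$. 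Multiplying the two yields $L^2_d/L^1_d\lesssim 1/d^2$, uniformly in the base face diagram since by Theorem~\ref{thm:extension_probability_ratio} the ratio depends on it only through the starting color.

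The main obstacle is making the discrete--to--continuum replacement rigorous and uniform, and turning the heuristic power count into a genuine upper bound (only $\lesssim$, not $\asymp$, is needed, which simplifies matters). Three points need care. First, the source and target points hug the boundary within a bounded number of lattice spacings, so one needs discrete Poisson--kernel (boundary local CLT) estimates for the walk killed on $\partial Q_d$ with the correct linear boundary behavior, not merely interior heat--kernel bounds. Second, the sums over $t$ and $s$ in Theorem~\ref{thm:extension_probability_ratio} are infinite, so one must quantify the far--field decay of harmonic measure in the wedge to see that the sums are dominated by the regime $t,s\asymp d$ where the continuum approximation is valid. Third, and most delicate, the estimate must survive the extreme admissible values of $F_a$ or $F_b$ close to $d$, where the source or target sits essentially at a corner of the removed triangle and $\phi$ is near a prevertex $\pm1$; there the naive ``$\operatorname{Im}\asymp$ distance to boundary'' pullback degenerates, and one must instead use the $(w\mp1)^{2/3}$ corner asymptotics of $\phi$ to confirm that the surviving exponent of $d$ is still at least $2$. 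Once these estimates are assembled, summing the rigorously bounded terms completes the proof.
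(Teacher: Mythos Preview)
Your approach is essentially the same as the paper's: both start from the formula of Theorem~\ref{thm:extension_probability_ratio}, pass to the continuum Poisson kernel on the wedge--with--triangle domain via the Schwarz--Christoffel map of Proposition~\ref{prop:Poisson_kernel}, and extract the $d^{-2}$ from the two extra kernel factors coming from the two inserted arc segments. The only cosmetic difference is that you rescale to the fixed domain $\tilde Q_1$ with mesh $1/d$, whereas the paper works directly in $\tilde Q_d$ and obtains the $1/d^2$ by bounding $P_{\tilde Q_d}(e_1+(t-1)e_2,\,se_1)\lesssim t^{-2}$, $P_{\tilde Q_d}((s-1)e_1+e_2,\,te_2)\lesssim s^{-2}$ and integrating each from $d$ to $\infty$; after your rescaling this is exactly your ``each extra kernel evaluation is $O(1/d)$.'' Your list of obstacles (boundary local CLT, tail of the $t,s$ sums, corner behavior near the prevertices) is more explicit than the paper, which handles these points informally.
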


\begin{proof}
Recall that if the arc at $s$ is blue, then
\[
\frac{L_d^2}{L_d^1} =
\sum_{t, s = d+1}^\infty
\frac{h_{(0,t)}(F_a,1;d) \cdot h_{(s,0)}(1,t-1;d) \cdot h_{(0,F_b+1)}(s-1,1;d)}
     {h_{(0,F_b+1)}(F_a,1;d)}.
\]
The red case is analogous. Let $\phi$ be the conformal map from Proposition~\ref{prop:Poisson_kernel}, which maps the real axis to the boundary of the wedge $ \tilde Q_d $, sending $ (-\infty, -1] $ and $ [1, \infty) $ to the wedge rays $ R_1 $ and $ R_2 $, respectively.

Let $ e_1 = 1 $ and $ e_2 = -\tfrac{1}{2} + \tfrac{\sqrt{3}}{2}i $ be a basis for $ \mathbb{C} $, so that $ R_1 $ is parallel to $ e_1 $ and $ R_2 $ to $ e_2 $. Let $ z \in \mathbb{H} $ satisfy $ \phi(z) = F_a e_1 + e_2 \in R_1 $, and let $ w \in \mathbb{H} $ with $ \phi(w) = (F_b+1)e_2 \in R_2 $. As $ d \to \infty $, the discrete harmonic functions $ h $ converge to the Poisson kernel, so by Proposition~\ref{prop:Poisson_kernel},
\[
L_d^1 \sim P_{\tilde Q_d}(\phi(z), \phi(w)) = \frac{1}{\pi} \cdot \frac{\operatorname{Im}(z)}{|z - w|^2} \cdot |\phi'(w)|^{-1}.
\]

Now consider the three-step path from $ F_a e_1 + e_2 $ to $ (F_b+1)e_2 \in R_2 $, passing through intermediate points $ t e_2 \in R_2 $ and $ s e_1 \in R_1 $. The expected contribution is
\[
L_d^2 \sim \iint_{t \in R_2,\ s \in R_1} P_{\tilde Q_d}(\phi(z), t e_2) \cdot P_{\tilde Q_d}(e_1 + (t-1)e_2, s e_1) \cdot P_{\tilde Q_d}(s(e_1 - 1) + e_2, \phi(w)) \, ds \, dt.
\]

Let $t_1=\phi^{-1}(te_2)$, $t_2=\phi^{-1}(e_1 + (t-1)e_2)$, $s_1=\phi^{-1}(se_2)$, and $s_2=\phi^{-1}(s(e_1-1)+e_2)$. Then we have,
\begin{align*}
\frac{L_d^2}{L_d^1}
&\sim |z - w|^2 \cdot \iint_{t \in R_2,\ s \in R_1} 
\frac{|\phi'(t_1)|^{-1}}{|z - t_1|^2}
\cdot \frac{\operatorname{Im}(t_2) \cdot |\phi'(s_1)|^{-1}}{|t_2 - s_1|^2} \cdot\frac{\operatorname{Im}(s_2)}{|s_2 - w|^2} \, ds \, dt.
\end{align*}

Then on RHS we can rewrite the integral as 
\begin{align*}
\frac{L_d^2}{L_d^1}
&\sim |z - w|^2 \cdot \iint_{t \in R_2,\ s \in R_1} 
P_{\tilde Q_d}(e_1 + (t-1)e_2, s e_1)
\cdot P_{\tilde Q_d}(s(e_1-1)+e_2, te_2)\cdot\frac{|s_2-t_1|^2}{|z-t_1|^2|s_2 - w|^2}  ds \, dt.
\end{align*}
The term $\frac{|z - w|^2|s_2-t_1|^2}{|z-t_1|^2|s_2 - w|^2}$ is of bounded constant order. The remaining integral is upper bounded by $O(1/d^2)$ independently of $z$ and $w$. The $O(1/d^2)$ bound follows because the terms $P_{\tilde Q_d}(e_1 + (t-1)e_2, s e_1)\lesssim t^{-2}$ and $P_{\tilde Q_d}(s(e_1-1)+e_2, te_2)\lesssim s^{-2}$ and the rays $R_1, R_2$ start from $d$.

Hence, as $ d \to \infty $, the ratio $ \frac{L_d^2}{L_d^1} \to 0 $, proving the desired bound.
\end{proof}

\begin{proof}[Proof of Theorem \ref{thm:face_asymptotics}]
By Lemma~\ref{lem:face_four_diagram}, there are four possible base face diagrams of size $6$. By Prop \ref{prop:face_diagrams} each face diagram of size $6+2k$ is obtained by applying the face diagram extension operation $k$ times to one of these four base face diagram.  

Theorem~\ref{prop:ratio_decay} shows that a single extension reduces the probability of the occurrence of such a diagram at depth $d$ by a factor $O(d^{-2})$.  
Applying $k$ extensions multiplies the probability by $O(d^{-2k})$.  
For $k \ge 1$, this gives the claimed $O(d^{-2k})$ decay.  
When $k=0$, no decay occurs, so the relative probability of a size $6$ face tends to $1$ as $d \to \infty$.

There are four possible base diagrams for each size $6+2k$ Prop \ref{prop:face_diagrams}.  
Summing over the occurrences of all four diagrams only changes a constant factor, 
so the overall probability of a face of size $6+2k$ at depth $d$ is still $O(d^{-2k})$, as claimed.
\end{proof}

\bibliographystyle{alpha} 
\bibliography{refs} 

\end{document}